\theoremstyle{plain}
\newtheorem{thm}{Theorem} [section]
\newtheorem{prop}[thm]{Proposition}
\newtheorem{cor}[thm]{Corollary}
\newtheorem{ques}[thm]{Question}
\newtheorem{lemma}[thm]{Lemma}
\newtheorem{thmsubs}{Theorem}[subsection]
\newtheorem{propsubs}[thmsubs]{Proposition}
\theoremstyle{remark}
\theoremstyle{definition}
\newtheorem{defi}[thm]{Definition}
\newtheorem{example}[thm]{Example}
\newtheorem{defisubs}[thmsubs]{Definition}
\newcommand\Afr{{\mathfrak{A}}}
\newcommand\Cpx{{\mathbb C}}
\newcommand\diag{\operatorname{diag}}
\newcommand\DT{\operatorname{DT}}
\newcommand\eps{{\epsilon}}
\newcommand\HEu{{\EuScript H}}                   % requires package euscript
\newcommand\Mcal{{\mathcal{M}}}
\newcommand\mut{{\tilde{\mu}}}
\newcommand\Nats{{\mathbb N}}
\newcommand\oneh{{\hat 1}}
\newcommand\RealPart{{\mathrm{Re}\,}}
\newcommand\Sc{{\mathcal{S}}}
\newcommand\Tr{{\mathrm{Tr}}}
\newcommand\xh{{\hat x}}
\newcommand\norm[1]{\ensuremath{\left\vert\left\vert #1 \right\vert\right\vert}}
\begin{document}

\title[Angles and spectrality]{Angles between Haagerup--Schultz projections and spectrality of operators}

\author[Dykema]{Ken Dykema$^*$}
\address{Ken Dykema, Department of Mathematics, Texas A\&M University, College Station, TX, USA.}
\email{ken.dykema@math.tamu.edu}
\thanks{\footnotesize ${}^{*}$ Research supported in part by NSF grant DMS--1800335.}
\author[Krishnaswamy-Usha]{Amudhan Krishnaswamy-Usha$^*$ $^\dagger$}
\thanks{\footnotesize ${}^{\dagger}$ Portions of this work are included in the thesis of A. Krishnaswamy-Usha for partial fulfillment of the requirements to obtain a Ph.D. degree at Texas A\&M University}
\address{Amudhan Krishnaswamy-Usha, Department of Mathematics, Texas A\&M University, College Station, TX, USA. }
\email{amudhan@math.tamu.edu}

\subjclass[2010]{47C15, 47A11, 47B40}

\keywords{finite von Neumann algebra, Haagerup-Schultz projection, spectrality, decomposability, circular operator}

\begin{abstract}
We investigate angles between Haagerup--Schultz projections of operators belonging to finite von Neumann algebras,
in connection with a property analogous to Dunford's notion of spectrality of operators.
In particular, we show that an operator is similar to the sum of a normal and an s.o.t.-quasinilpotent operator that commute if and only if
the angles between its Haagerup--Schultz projections are uniformly bounded away from zero (and we call this the uniformly non-zero angles property).
Moreover, we show that spectrality is equivalent to this uniformly non-zero angles property plus decomposability.
Finally, using this characterization,
we construct an easy example of an operator which is decomposable but not spectral, and we show that Voiculescu's circular operator is not spectral
(nor is any of the circular free Poisson operators).
\end{abstract}

\date{October 24, 2020}

\maketitle

\section{Introduction}

The existence of the Jordan canonical form for an $n\times n$ complex matrix $T$ amounts to writing $T$ as a sum of
a diagonalizable operator plus a commuting nilpotent operator.
Equivalently, it implies that $T$ is similar to a normal operator plus a commuting nilpotent operator.
In 1954, Dunford~\cite{D54} introduced and studied {\em spectral} operators, which are operators $T$ on a Banach space that admit idempotent-valued spectal measures
commuting with $T$ that behave well with respect to the spectrum.
(These definitions are briefly recalled in Section~\ref{sec:spectral}, below).
With the help of a result of Wermer~\cite{W54}, he showed that on a Hilbert space, this amounts to $T$ being similar to the sum of a normal operator
and a commuting quasinilpotent operator.

Let $\Mcal$ be a von Neumann algebra equipped with a normal, faithful, tracial state $\tau$.
In this paper, we study operators $T$ belonging to $\Mcal$.
The Brown measure of $T$ is a sort of spectral distribution measure.
In~\cite{HS09}, Haagerup and Schultz proved existence of projections onto hyperinvariant subspaces of $T$ that behave well with respect to Brown measure:
for each Borel subset $B\subseteq\Cpx$, there is a {\em Haagerup--Schultz} projection $P(T,B)$.
(See Section~\ref{subsec:HS} for a brief summary of these and some other related results.)
In~\cite{DSZ15}, the Haagerup--Schultz projections were used to prove a Schur type upper triangularization result.
In particular, it was proved that every $T\in\Mcal$ is can be written as a normal operator plus an s.o.t.-quasinilpotent operator, which has as
hyperinvariant subspaces certain spectral projections of the normal operator.
An s.o.t.-quasinilpotent operator is one whose Brown measure is concentrated at $0$.

The main theme of this paper is angles between Haagerup--Schultz projections.
\begin{defi}
 Let $V$, $W$ be closed non-zero subspaces of a Hilbert space $\HEu$. Then, the angle between them is
 \[
 \alpha(V,W) := \inf \left\{ \cos^{-1}\left( \left| \left\langle v , w \right\rangle \right| \right) \mid  v \in V,\, w \in W,\,\norm{v}=\norm{w}=1\right\}     
 \]
If $p$ and $q$ are projections in $B(\HEu)$, then we let $\alpha(p,q)=\alpha(p\HEu,q\HEu)$.
\end{defi}
We say that $T\in\Mcal$ has the {\em uniformly non-zero angle property} (or UNZA property), if there is $\kappa>0$ such that for all Borel sets $B\subseteq\Cpx$,
we have $\alpha(P(T,B),P(T,B^c))\ge\kappa$.
We show that the UNZA property is analogous to spectrality for elements of finite von Neumann algebras.
In particular (Theorem~\ref{thm:unzaspec}), we show that $T\in\Mcal$ has the UNZA property if and only if it is similar in $\Mcal$ to an element of the form $N+Q$
where $N$ is normal and $Q$ is s.o.t.-quasinilpotent and commutes with $N$.
We also show (Corollay~\ref{cor:specdec}) that $T$ is spectral if and only if it is decomposable and has the UNZA property.

We should note that this connection between spectrality of operators and angles between certain of their associated subspaces is not the first.
In~\cite{D58}, Dunford provided a set of four conditions (A)-(D), which are together equivalent to spectrality.
As noted by Stampfli in~\cite{S69}, condition (B) translates to saying that the angle between local spectral subspaces is uniformly bounded away from zero.
However, although conditions (A) and (C) in Dunford's result are natural (they are now known as the single-valued extension property
and Dunford's property (C) ), condition (D) is not.
Moreover, it is not clear if properties (A), (C) and (D) together imply decomposability.

We go on to apply this characterization of spectrality for elements of finite von Neumann algebras in terms of the UNZA property to particular cases.
It is easy to construct a direct sum of matrices that is decomposable but fails the UNZA property and is, thus, not spectral.
We also show that Voiculescu's circular operator $Z$ (which was known, from~\cite{DH04}, to be decomposable) fails to have the UNZA property.
In fact, we show (Theorem~\ref{thm:circnonspec})
that for some Borel set $B$, $\alpha(P(Z,B),P(Z,B^c))=0$, and the same whenever $Z$ is a circular free Poisson operator
(a class which includes the circular operator).
We do this by explicitly constructing vectors in the Haagerup--Schultz subspaces whose angles approach zero.

Here is a brief summary of the contents of the paper:
In Section~\ref{sec:prelims}, we review some topics and earlier results that we will need.
In Section~\ref{sec:spectral}, we consider Dunford's notions of spectral and scalar type operators in the context of finite von Neumann algebras.
In Section~\ref{sec:angles}, we prove several results about the angles between Haagerup--Schultz projections, including our characterizations mentioned above.
In Section~\ref{sec:circ}, we exhibit a direct sum of matrices that fails the UNZA property and we show that Voiculescu's circular operator also fails to have the UNZA property;
thus, these operators are not spectral.

\smallskip

\noindent
{\bf Acknowledgement:}   The authors thank L\'aszl\'o Zsid\'o for inspiring conversations
and an anonymous referee for helpful suggestions.

\section{Preliminaries}
\label{sec:prelims}

\subsection{Notation}

$\Cpx$ denotes the complex plane, and $\Afr$ is its Borel $\sigma$-algebra.
Given $0\le r\le s\le\infty$, we write
\[
A_{r,s}=\{z\in\Cpx\mid r\le|z|\le s\}
\]
for the closed annulus centered at the orgin with radii $r$ and $s$.
Thus, when $r=0<s$ this is the closed ball of radius $s$, when $0<r=s<\infty$ this is the circle of radius $r$
and when $r<s=\infty$ this is the complement of the open ball of radius $r$.

Throughout, $\Mcal$ will refer to a von Neumann algebra having a normal, faithful tracial state $\tau$, and acting faithfully on a Hilbert space. 
Oftentimes, this Hilbert space will be taken to be $L^2(\Mcal,\tau)$, which is the completion of $\Mcal$ with respect to the norm $\|x\|_2:=\tau(x^*x)^{1/2}$.
We let
$\xh\in L^2(\Mcal,\tau)$ denote the element corresponding to $x\in\Mcal$.

For $T \in \Mcal$, $\sigma(T)$ will denote its spectrum, $\mu_T$ will denote its Brown measure, and for $B \in \Afr$, $P(T,B)$ will denote the corresponding Haagerup--Schultz projection.
By projection, we mean a bounded self-adjoint idempotent. 

\subsection{Brown measure and Haagerup--Schultz projections}
\label{subsec:HS}

L.Brown~\cite{B83} showed that there exists a generalization of the spectral distribution measure to non-normal operators in tracial von Neumann algebras:
\begin{thmsubs}\label{thm:brownmeas}
Let $T \in \Mcal$. Then there exists a unique probability measure $\mu_T$ such that for every $\lambda \in \Cpx$, 
\[
\int_{[0,\infty)}\log(x) d\mu_{|T-\lambda|} (x) = \int_\Cpx\log|z-\lambda| d\mu_T(z),
\]
where for a positive operator $S$, $\mu_S$ denotes the spectral distribution measure $\tau \circ E$, where $E$ is the spectral measure for $S$.
\end{thmsubs}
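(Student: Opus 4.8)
The plan is to follow L.~Brown's approach: realize $\mu_T$ as (a normalization of) the Riesz measure of a subharmonic function, and obtain uniqueness from the injectivity of the logarithmic potential. Set
\[
L(\lambda):=\tau\bigl(\log|T-\lambda|\bigr)=\int_{[0,\infty)}\log x\,d\mu_{|T-\lambda|}(x)\in[-\infty,\infty),
\]
the logarithm of the Fuglede--Kadison determinant of $T-\lambda$ and the tracial analogue of $\log|\det(T-\lambda)|$ (whose distributional Laplacian, in the matrix case, is $2\pi$ times the eigenvalue counting measure). The integral is well defined because $\log^{+}$ is bounded on $\sigma(|T-\lambda|)\subseteq[0,\norm{T-\lambda}]$; it can take the value $-\infty$. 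I would (i) prove $L$ is subharmonic on $\Cpx$, (ii) set $\mu_T:=\frac{1}{2\pi}\Delta L$, its Riesz measure, and check $\mu_T\ge 0$, $\mu_T(\Cpx)=1$, and $L(\lambda)=\int_\Cpx\log|z-\lambda|\,d\mu_T(z)$ for all $\lambda$, and (iii) deduce uniqueness.

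The key step is \textbf{subharmonicity of $L$}. For $\epsilon>0$ put $L_\epsilon(\lambda):=\tfrac12\tau\bigl(\log((T-\lambda)^*(T-\lambda)+\epsilon)\bigr)$. Since $(T-\lambda)^*(T-\lambda)+\epsilon\ge\epsilon>0$ depends polynomially on $\lambda$ and $\bar\lambda$ and $\log$ is real-analytic near that (bounded, positive) spectrum, $L_\epsilon$ is real-analytic on $\Cpx$; writing $B:=T-\lambda$ and $A:=B^*B+\epsilon$, two differentiations of $\tau(\log A)$ using $\frac{d}{dt}\tau(\log A(t))=\tau\bigl(A(t)^{-1}\dot A(t)\bigr)$ and the trace property give
\[
\Delta L_\epsilon(\lambda)=2\,\tau\bigl(A^{-1}\bigr)-2\,\tau\bigl(A^{-1}BA^{-1}B^{*}\bigr).
\]
Now $BA^{-1}B^{*}=B(B^{*}B+\epsilon)^{-1}B^{*}=BB^{*}(BB^{*}+\epsilon)^{-1}\le 1$, by the intertwining identity $Bg(B^{*}B)=g(BB^{*})B$ and the fact that $x\mapsto x/(x+\epsilon)$ sends $[0,\infty)$ into $[0,1)$; conjugating by $A^{-1/2}$ and applying $\tau$ yields $\tau(A^{-1}BA^{-1}B^{*})\le\tau(A^{-1})$, so $\Delta L_\epsilon\ge 0$ and $L_\epsilon$ is subharmonic. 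As $\epsilon\downarrow 0$, operator monotonicity of $\log$ and normality of $\tau$ give $L_\epsilon\downarrow L$ pointwise; moreover $L\not\equiv-\infty$ since $L(\lambda)=\log|\lambda|+\tau(\log|1-\lambda^{-1}T|)=\log|\lambda|+o(1)$ as $|\lambda|\to\infty$. A decreasing pointwise limit of subharmonic functions, if not identically $-\infty$, is subharmonic, so $L$ is subharmonic and $\mu_T:=\frac{1}{2\pi}\Delta L$ is a well-defined positive Borel measure.

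Next, \textbf{the properties of $\mu_T$}. On the resolvent set $T-\lambda$ is invertible, so the same computation with $\epsilon=0$ applies and $B(B^{*}B)^{-1}B^{*}=BB^{*}(BB^{*})^{-1}=1$, giving $\Delta L=0$ there; hence $L$ is harmonic off $\sigma(T)$ and the support of $\mu_T$ is contained in $\sigma(T)$, in particular $\mu_T$ is compactly supported. By the Riesz decomposition theorem applied on large discs (and because $\mu_T$ has fixed compact support) the function $h:=L-\int_\Cpx\log|\cdot-z|\,d\mu_T(z)$ is harmonic on all of $\Cpx$; comparing behaviour at infinity, $h(\lambda)=\bigl(1-\mu_T(\Cpx)\bigr)\log|\lambda|+O(1)$, and an entire harmonic function of at most logarithmic growth is constant --- which forces $\mu_T(\Cpx)=1$ and then, by matching the $o(1)$ terms, $h\equiv 0$. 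This is precisely the asserted identity (as an equality of $[-\infty,\infty)$-valued subharmonic functions, hence at every $\lambda$), since the left-hand integral there equals $\tau(\log|T-\lambda|)=L(\lambda)$. Thus $\mu_T$ exists.

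Finally, \textbf{uniqueness}: if $\mu$ is any probability measure with $\int_\Cpx\log|z-\lambda|\,d\mu(z)=L(\lambda)$ for every $\lambda$, then --- since the logarithmic potential of a finite measure is locally integrable --- one may take the distributional Laplacian in $\lambda$ and use $\Delta_\lambda\log|z-\lambda|=2\pi\delta_z$ (Fubini) to get $2\pi\mu=\Delta L=2\pi\mu_T$, i.e.\ $\mu=\mu_T$. The main obstacle is the subharmonicity step of the second paragraph: getting the sign of $\Delta L_\epsilon$ right (the operator inequality $BB^{*}(BB^{*}+\epsilon)^{-1}\le 1$ and a fully rigorous use of the finite, normal, faithful trace when differentiating $\tau\circ\log$), together with the limiting argument, which must accommodate the value $-\infty$.
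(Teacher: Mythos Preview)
The paper does not give its own proof of this statement: Theorem~\ref{thm:brownmeas} is quoted as a preliminary result due to L.~Brown~\cite{B83}, with no argument supplied.  So there is nothing in the paper to compare your proposal against.

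That said, your outline is essentially Brown's original argument and it is correct.  The computation $\Delta L_\epsilon(\lambda)=2\big(\tau(A^{-1})-\tau(A^{-1}BA^{-1}B^{*})\big)$ is right, and the key inequality follows exactly as you say: $BA^{-1}B^{*}=BB^{*}(BB^{*}+\epsilon)^{-1}\le 1$, whence $\tau(A^{-1}BA^{-1}B^{*})=\tau\big(A^{-1/2}(BA^{-1}B^{*})A^{-1/2}\big)\le\tau(A^{-1})$ by positivity of $\tau$.  The passage $L_\epsilon\downarrow L$ and the resulting subharmonicity are standard, as is the Riesz-measure identification and the Liouville argument forcing $\mu_T(\Cpx)=1$ and $h\equiv 0$.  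Two small points you gloss over but which are easily filled in: (i) the Riesz representation initially gives $L(\lambda)=\int\log|z-\lambda|\,d\mu_T(z)$ only almost everywhere, but both sides are subharmonic and a subharmonic function is recovered pointwise from its $L^1_{\mathrm{loc}}$ class via $u(\lambda)=\lim_{r\to0}\frac{1}{\pi r^2}\int_{|w-\lambda|<r}u(w)\,dw$, so equality holds at every $\lambda$; (ii) for uniqueness you assert that the logarithmic potential of a finite measure is locally integrable---this needs $\int\log^{+}|z|\,d\mu(z)<\infty$, which follows immediately from the hypothesis that $\int\log|z-\lambda|\,d\mu(z)=L(\lambda)$ is finite at some $\lambda$ outside $\sigma(T)$.
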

The measure $\mu_T$ is called the {\em Brown measure} of $T$. If $T$ is normal, $\mu_T$ equals the spectral distribution measure of $T$. 

\smallskip
Haagerup and Schultz in~\cite{HS09} constructed a set of invariant projections for $T$, which behave well with the Brown measure:
\begin{thmsubs}\label{thm:hsproj}
Let $T \in \Mcal$. For any $B \in \Afr$, there exists a unique projection $p=P(T,B) \in \Mcal$ such that 
\begin{enumerate}[(i)]
\item $Tp = pTp$ 
\item $\tau(p) = \mu_T(B)$
\item when $p\neq 0$, considering $pTp$ as an element of $p\Mcal p$, its Brown measure is concentrated in $B$
\item when $p \neq 1$, considering $(1-p)T(1-p)$ as an element of $(1-p)\Mcal (1-p)$, $\mu_{(1-p)T}$ is concentrated in $\Cpx \setminus B$. 
\item If $q \in \Mcal$ is a $T$-invariant projection such that $\mu_{Tq}$ (computed in the corner $q\Mcal q$) is concentrated in $B$, then $q \leq p$.
\end{enumerate}
Moreover, $P(T,B)$ is $T$-hyperinvariant, and if $B_1 \subset B_2$, then $P(T,B_1) \leq P(T,B_2)$.
\end{thmsubs}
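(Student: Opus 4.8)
The plan is to derive \emph{uniqueness, hyperinvariance and monotonicity} formally from the maximality clause~(v) and to concentrate the real effort on \emph{existence}, which I would prove first for disks centered at the origin and then bootstrap to general Borel sets. For the formal part: if $p$ and $p'$ both satisfy (i)--(v), then by~(i) and~(iii) $p$ is a $T$-invariant projection whose compression to $p\Mcal p$ has Brown measure concentrated in $B$, so (v) applied with $p'$ as the target projection gives $p\le p'$, and by symmetry $p'\le p$; this is uniqueness. If $u\in\Mcal$ is a unitary commuting with $T$, then $upu^*$ again satisfies (i)--(v) (conjugation changes neither $\tau(p)$ nor the Brown measure of the corner, and $uTu^*=T$), so $upu^*=p$; as $\{T\}'\cap\Mcal$ is the linear span of its unitaries, $P(T,B)$ commutes with it, i.e.\ is $T$-hyperinvariant. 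And if $B_1\subseteq B_2$, then $P(T,B_1)$ is $T$-invariant with compression-Brown-measure in $B_1\subseteq B_2$, so $P(T,B_1)\le P(T,B_2)$ by (v) for $B_2$.

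For existence in the disk case the building block is the sequence of positive contractions $b_n:=\big((T^n)^*T^n\big)^{1/(2n)}=|T^n|^{1/n}$, with $\norm{b_n}=\norm{T^n}^{1/n}\le\norm{T}$. The crucial --- and hard --- input is that $(b_n)$ converges in the strong operator topology on $L^2(\Mcal,\tau)$ to a positive $b\in\Mcal$ whose spectral distribution is the pushforward of $\mu_T$ under $z\mapsto|z|$, and that the range of the spectral projection $\mathbf 1_{[0,r]}(b)$ is the closed linear span of the vectors $\xi\in L^2(\Mcal,\tau)$ with $\limsup_n\norm{T^n\xi}^{1/n}\le r$; this is the technical heart of~\cite{HS09}, a tracial von Neumann algebra refinement of Yamamoto's theorem on the asymptotics of singular values of powers of a matrix. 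Granting it, put $p:=\mathbf 1_{[0,r]}(b)$ and declare $P(T,A_{0,r}):=p$. Then: (ii) is the identification of the spectral scale of $b$ with the radial part of $\mu_T$; $T$-invariance of the range in~(i) is immediate, since $\limsup_n\norm{T^n(T\xi)}^{1/n}=\limsup_n\norm{T^n\xi}^{1/n}$; the same growth bound makes the Fuglede--Kadison spectral radius of the compression $pTp$ at most $r$, giving~(iii); (iv) follows from (ii), (iii) and the additivity of Brown measure along a $T$-invariant pair $e\le f$ of projections, $\tau(f)\,\mu_{fTf}=\tau(e)\,\mu_{eTe}+\tau(f-e)\,\mu_{(f-e)T(f-e)}$ (as in~\cite{DSZ15}), taken with $e=p$ and $f=1$; and maximality~(v) holds because any $T$-invariant $q$ with $\mu_{Tq}$ concentrated in $A_{0,r}$ consists of vectors obeying the same growth bound $\limsup_n\norm{T^n\xi}^{1/n}\le r$, hence lies under $p$.

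Bootstrapping to arbitrary Borel sets is a chain of reductions, formal in spirit but not one line apiece. Translation gives disks centered anywhere via $P(T,\lambda+A_{0,r}):=P(T-\lambda,A_{0,r})$, using that $\mu_{T-\lambda}$ is the translate of $\mu_T$ and that $(T-\lambda)$-invariance, $T$-invariance and the relative commutants all coincide. Complements come from the identity $1-P(T,B)=P(T^*,\overline{B^c})$ (up to boundary subtleties), itself read off from the block-lower-triangular form of $T^*$ relative to a $T$-invariant projection together with the additivity above; and that additivity likewise forces $B\mapsto P(T,B)$ to respect finite unions and intersections on the Boolean algebra generated by disks. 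Since that algebra generates $\Afr$, a monotone-class argument --- using that $\mu_T$ is a finite measure and that the relevant families of projections are monotone, so suprema and infima of projections track unions and intersections of sets --- extends the construction to all of $\Afr$; a little extra care with non-closed $B$ (treating $G_\delta$ and $F_\sigma$ sets first) ensures the compression's Brown measure lands in $B$ rather than just $\overline B$.

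I expect essentially all the difficulty to sit in the disk step. Proving the strong-operator convergence of $|T^n|^{1/n}$, identifying the spectral scale of the limit with the radial part of $\mu_T$, and establishing the growth-rate description of the range of $\mathbf 1_{[0,r]}(b)$ is exactly where the potential theory of the Brown measure does all the work: the subharmonicity of $\lambda\mapsto\int_\Cpx\log|z-\lambda|\,d\mu_T(z)$, the behaviour of the Fuglede--Kadison determinant under perturbation, and fine norm estimates in the spirit of the $R$-diagonal computations of Haagerup and Larsen. Everything after the disk step is bookkeeping with the lattice of $T$-invariant projections, organized around the maximality property~(v).
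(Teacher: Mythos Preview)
The paper does not give its own proof of this theorem: it is stated in the preliminaries section as a result of Haagerup and Schultz~\cite{HS09}, with no argument supplied. So there is nothing in the paper to compare your proposal against.

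That said, your sketch is a reasonable high-level summary of the strategy of~\cite{HS09} itself, and the formal deductions (uniqueness, hyperinvariance, monotonicity from~(v)) are correct. Two small technical points worth flagging. First, your description of the range of $\mathbf 1_{[0,r]}(b)$ as the closed span of vectors $\xi$ with $\limsup_n\norm{T^n\xi}^{1/n}\le r$ is not quite what Haagerup and Schultz prove; the correct characterization (recorded in the paper as Proposition~\ref{prop:disks}) requires approximating sequences $\xi_n\to\xi$ with $\limsup_n\norm{T^n\xi_n}^{1/n}\le r$, and this distinction matters for the argument. Second, the passage from disks to general Borel sets in~\cite{HS09} is more delicate than a monotone-class argument: one has to show that the candidate projections actually satisfy~(iii) and~(iv) for non-closed $B$, and the lattice identities (your Theorem~\ref{thm:hsproj-lattice}) are themselves nontrivial and were completed only in Schultz's follow-up~\cite{S06}. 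Your instinct that ``essentially all the difficulty sits in the disk step'' somewhat understates the work needed for the extension.
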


In Theorem 8.1 of~\cite{HS09}, Haagerup and Schultz also show the following convergence result:

\begin{thmsubs}\label{thm:sotq}
Let $T \in \Mcal$. Then the sequence $|T^n|^{1/n}$ has a strong operator limit $A$, and for every $r\geq 0$, the spectral projection of $A$ associated with the interval
$[0,r]$ is $P(T,r\overline{\mathbb{D}})$, where $\mathbb{D}$ is the open disc of radius $1$. 
\end{thmsubs}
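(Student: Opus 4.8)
This result is the finite von Neumann algebra analogue of Yamamoto's theorem on the asymptotics of the singular values of powers of a matrix, and I would attack it in three stages: first identify the prospective limit from the Haagerup--Schultz projections, then prove convergence of the associated spectral distributions, and finally upgrade to strong operator convergence while matching the spectral projections. For the first stage, set $e(s):=P(T,s\overline{\mathbb{D}})$ for $s\ge 0$. From Theorem~\ref{thm:hsproj} (monotonicity of $B\mapsto P(T,B)$, its behaviour under monotone limits of Borel sets, and $\tau(P(T,B))=\mu_T(B)$ with $\mu_T$ a compactly supported probability measure), the family $(e(s))_{s\ge0}$ is an increasing, right-continuous resolution of the identity with $e(s)=1$ for $s\ge\max\{|z|:z\in\operatorname{supp}\mu_T\}$. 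Then $A:=\int_0^\infty s\,de(s)$ is a positive element of $\Mcal$ whose spectral projection $\mathbf 1_{[0,r]}(A)$ equals $P(T,r\overline{\mathbb{D}})$ by construction, so the theorem reduces to proving $|T^n|^{1/n}\to A$ in the strong operator topology.

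For the second stage I would show $\tau\big(\mathbf 1_{[0,r]}(|T^n|^{1/n})\big)\to\mu_T(r\overline{\mathbb{D}})$ at every continuity point $r$. In terms of $s$-numbers, the key input is Horn--Weyl type log-majorization for $s$-numbers in finite von Neumann algebras (Fack--Kosaki): $s(T^{m+n})\prec_{\log}s(T^m)\,s(T^n)$, so the functions $j\mapsto\sum_{i\le j}\log s_i(T^n)$ are subadditive in $n$ and Fekete's lemma gives convergence of $\tfrac{1}{n}\sum_{i\le j}\log s_i(T^n)$; together with the uniform bound $\||T^n|^{1/n}\|\le\|T\|$ this yields convergence of the distributions of $|T^n|^{1/n}$. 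That the limiting distribution is the image of $\mu_T$ under $z\mapsto|z|$ I would extract from the multiplicativity of the Fuglede--Kadison determinant, $\tau(\log|T^n|)=n\int_\Cpx\log|z|\,d\mu_T(z)$ (Theorem~\ref{thm:brownmeas}), by a potential-theoretic argument. In particular $\tau(\mathbf 1_{[0,r]}(|T^n|^{1/n}))\to\tau(P(T,r\overline{\mathbb{D}}))$ and $\tau(|T^n|^{2/n})\to\tau(A^2)$.

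For the third stage, let $B$ be any weak-$*$ cluster point of the bounded sequence $(|T^n|^{1/n})$. I would show $\mathbf 1_{[0,r]}(B)=P(T,r\overline{\mathbb{D}})$ for all $r$, forcing $B=A$ and hence weak-$*$ convergence $|T^n|^{1/n}\to A$; combined with $\tau(|T^n|^{2/n})\to\tau(A^2)$ from Stage~2 this gives convergence in $\|\cdot\|_2$, which upgrades to strong operator convergence because the sequence is norm-bounded. The inclusion $P(T,r\overline{\mathbb{D}})\le\mathbf 1_{[0,r]}(B)$ would come from a corner estimate: with $p=P(T,r\overline{\mathbb{D}})$, the identity $T^np=(pTp)^n$ (from $Tp=pTp$) and the Hansen--Pedersen operator--Jensen inequality for the operator-concave function $t\mapsto t^{1/n}$ give $p\,|T^n|^{2/n}\,p\le\big|(pTp)^n\big|^{2/n}$, so that $\|p\,|T^n|^{1/n}\|$ is controlled by the growth rate of $(pTp)^n$, an operator whose Brown measure lies in $r\overline{\mathbb{D}}$; a symmetric argument with the complementary corner $1-P(T,s\overline{\mathbb{D}})$, whose compression of $T$ has Brown measure in $\{|z|\ge s\}$, would give the reverse inclusion, with the equal-trace conclusion of Stage~2 closing any slack.

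The main obstacle is precisely this corner estimate: $|T^n|^{1/n}$ is built from $T^{*n}T^n$ and does not respect the Haagerup--Schultz decomposition, so the inequality above only ties $\|p\,|T^n|^{1/n}\|$ to the actual growth rate of $(pTp)^n$ rather than, a priori, to the radius $r$, and one still has to see that $p\HEu$ is asymptotically reducing for $|T^n|^{1/n}$. Bridging this --- propagating the containment of the Brown measure of $pTp$ in $r\overline{\mathbb{D}}$ into a genuine norm bound on the compressed operators $p\,|T^n|^{1/n}\,p$ --- is where the full strength of Theorem~\ref{thm:hsproj}, in particular the maximality clause (v), and the finiteness of $\tau$ have to be used.
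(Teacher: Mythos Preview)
The paper does not prove this statement; it is quoted as Theorem~8.1 of~\cite{HS09}, so there is no in-paper argument to compare against. It is worth noting, however, that in~\cite{HS09} the logical order is the reverse of yours: Haagerup and Schultz first establish the strong convergence of $|T^n|^{1/n}$ by a direct analysis of the distributions $\mu_{|T^n|^{1/n}}$ (via log-subadditivity of the Fuglede--Kadison determinant and a careful study of operators affiliated with $\Mcal$), and only then \emph{define} $P(T,r\overline{\mathbb D})$ to be the spectral projection $\mathbf 1_{[0,r]}(A)$ of the limit $A$. The properties of Haagerup--Schultz projections for disks that you invoke in Stage~1 are, in the original development, consequences of the convergence theorem rather than inputs to it.

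Your sketch also contains a circularity that you partly flag but do not resolve. In Stage~3 you correctly obtain $p\,|T^n|^{2/n}\,p\le|(pTp)^n|^{2/n}$ from the operator-Jensen inequality, using $T^np=(pTp)^n$; but to conclude that the right-hand side is eventually dominated in any useful sense by $r^2 p$ you need to know that an operator $S$ with Brown measure supported in $r\overline{\mathbb D}$ has $|S^n|^{1/n}$ converging in s.o.t.\ to something bounded by $r$. That is exactly the theorem under discussion, applied to the compression $S=pTp$. The same remark applies to the complementary corner $(1-p)T(1-p)$. The ``equal-trace conclusion of Stage~2'' cannot close this gap either, because weak-$*$ cluster points of $|T^n|^{1/n}$ need not have the same spectral distribution as the limit of the distributions (weak-$*$ convergence of positive operators does not imply convergence of their spectral measures). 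Without an independent treatment of this step --- and in~\cite{HS09} it occupies a substantial portion of the paper --- Stage~3 does not close.
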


It follows that $\mu_T = \delta_0$ is the point mass at $0$ if and only if $|T^n|^{1/n}$ converges to $0$ in the strong operator topology.
Such operators are called {\em s.o.t.-quasinilpotent}. 

The following result is from the essential construction, found in~\cite{HS09}, which Haagerup and Schultz used to build $P(T,B)$ for general Borel sets $B$.
\begin{propsubs}\label{prop:disks}
Let $r>0$.
Suppose $\Mcal$ acts on the Hilbert space $\HEu$ and $T\in\Mcal$.
Then
\[
P(T,A_{0,r})\HEu=\{\xi\in\HEu\mid \exists \xi_n\in\HEu,\,\lim_{n\to\infty}\|\xi_n-\xi\|=0,\,\limsup_{n\to\infty}\|T^n\xi_n\|^{1/n}\le r\}
\]
and
\begin{multline*}
P(T,A_{r,\infty})\HEu= \\
\{\eta\in\HEu\mid \exists \eta_n\in\HEu,\,\lim_{n\to\infty}\|T^n\eta_n-\eta\|=0,\,\limsup_{n\to\infty}\|\eta_n\|^{1/n}\le\frac1r\}.
\end{multline*}
\end{propsubs}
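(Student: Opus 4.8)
I would prove the two displayed identities in turn, deriving the second from the first. Set $B_n=|T^n|^{1/n}$ and $A=\text{s.o.t.-}\lim_n B_n$ as in Theorem~\ref{thm:sotq}, and note $\|B_n\|=\|T^n\|^{1/n}\le\|T\|$, $B_n^n=|T^n|$, $\|T^n\xi\|=\|B_n^n\xi\|$, and (Theorem~\ref{thm:sotq}) $e_r:=\mathbf 1_{[0,r]}(A)=P(T,A_{0,r})$. I use freely that a bounded sequence of positive operators $B_n\to A$ strongly satisfies $g(B_n)\to g(A)$ strongly for $g$ continuous on $[0,\|T\|]$ (uniform polynomial approximation), and that $\|g(B_n)v\|^2\le\langle g(B_n)v,v\rangle$ when $0\le g\le1$. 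Denote the two right-hand sides by $\mathcal V$ and $\mathcal W$.

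For the first identity, $\mathcal V$ is readily checked to be a linear subspace. To get $e_r\HEu\subseteq\mathcal V$, take $\xi$ with $e_r\xi=\xi$; for $\epsilon>0$ put $\xi_n^{(\epsilon)}=\mathbf 1_{[0,r+\epsilon]}(B_n)\xi$, so $\|T^n\xi_n^{(\epsilon)}\|\le(r+\epsilon)^n\|\xi\|$ by functional calculus, while choosing continuous $g$ with $\mathbf 1_{[0,r]}\le g\le\mathbf 1_{[0,r+\epsilon]}$ gives $\langle\xi_n^{(\epsilon)},\xi\rangle\ge\langle g(B_n)\xi,\xi\rangle\to\langle g(A)\xi,\xi\rangle=\|\xi\|^2$ (as $g\equiv1$ on the support of the spectral measure of $A$ at $\xi$), hence $\xi_n^{(\epsilon)}\to\xi$; a diagonalization over $\epsilon\downarrow0$ gives $\xi\in\mathcal V$. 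Conversely I claim $\mathcal V\cap(1-e_r)\HEu=\{0\}$; since $\mathcal V$ is a subspace containing $e_r\HEu$ and $e_r=e_r^*$, this forces $\mathcal V=e_r\HEu$. For the claim, let $0\ne\eta=(1-e_r)\eta$; since $\mathbf 1_{[s,\infty)}(A)\uparrow1-e_r$ as $s\downarrow r$, pick $s>r$ with $\zeta:=\mathbf 1_{[s,\infty)}(A)\eta\ne0$, pick $s'\in(r,s)$ and continuous $g$ with $g=0$ on $[0,s']$, $g=1$ on $[s,\infty)$, $0\le g\le1$; then $\|g(A)\eta\|\ge\|\zeta\|$ by orthogonality. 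For any $\eta_n\to\eta$ we get $g(B_n)\eta_n\to g(A)\eta$, so $\|\mathbf 1_{[s',\infty)}(B_n)\eta_n\|\ge\|g(B_n)\eta_n\|\ge\|\zeta\|/2$ eventually, whence $\|T^n\eta_n\|=\|B_n^n\eta_n\|\ge(s')^n\|\mathbf 1_{[s',\infty)}(B_n)\eta_n\|$ gives $\limsup_n\|T^n\eta_n\|^{1/n}\ge s'>r$, so $\eta\notin\mathcal V$.

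For the second identity, put $q=P(T,A_{r,\infty})$. Then $(1-q)\HEu$ is $T^*$-invariant and $T^*$ restricted to it is $R:=((1-q)T(1-q))^*$, whose Brown measure $\mu_R=\overline{\mu_{(1-q)T(1-q)}}$ is concentrated in $\{|z|<r\}$ by Theorem~\ref{thm:hsproj}(iv). Since $\mu_R(\{|z|<r\})=1$, normality of $\tau$ and monotonicity of Haagerup--Schultz projections give $\bigvee_{r'<r}P(R,A_{0,r'})=1-q$, so by the first identity applied to $R$ in $(1-q)\Mcal(1-q)$, $(1-q)\HEu$ is the closed span of all $\xi$ admitting $\xi_k\to\xi$ in $(1-q)\HEu$ with $\limsup_k\|(T^*)^k\xi_k\|^{1/k}<r$ (note $R^k\xi_k=(T^*)^k\xi_k$ for such $\xi_k$). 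Now if $\eta\in\mathcal W$ has witness $(\eta_n)$, and $\xi$ as above has witness $(\xi_n)$ with $\limsup_n\|(T^*)^n\xi_n\|^{1/n}\le r'<r$, then $\langle\eta,\xi\rangle=\lim_n\langle T^n\eta_n,\xi_n\rangle=\lim_n\langle\eta_n,(T^*)^n\xi_n\rangle$, and $|\langle\eta_n,(T^*)^n\xi_n\rangle|\le\|\eta_n\|\,\|(T^*)^n\xi_n\|\le\bigl((\tfrac1r+\epsilon)(r'+\epsilon)\bigr)^n\to0$ for small $\epsilon$; hence $\eta\perp(1-q)\HEu$, i.e.\ $\mathcal W\subseteq q\HEu$.

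For the reverse inclusion $q\HEu\subseteq\mathcal W$, work in $\KEu=q\HEu$ with $S=qTq$; then $\KEu$ is $T$-invariant with $T^nq=S^n$, and by Theorem~\ref{thm:hsproj}(iii) the Brown measure of $S$, and hence of $S^*$, is concentrated in $\{|z|\ge r\}$, so applying Theorem~\ref{thm:sotq} to $S^*$ gives $C_n:=|S^{*n}|^{1/n}\to C$ strongly with $\mathbf 1_{[0,t]}(C)=P(S^*,t\overline{\mathbb D})=0$ for $t<r$ (trace $\mu_{S^*}(\{|z|\le t\})=0$), hence $C\ge rq$. Fix $\eta\in\KEu$, $\epsilon\in(0,r)$, let $g_n(t)=\min(t^{-1},(r-\epsilon)^{-2n})$ and $\eta_n=S^{*n}g_n(S^nS^{*n})\eta$. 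Then $\|\eta_n\|^2=\langle S^nS^{*n}g_n(S^nS^{*n})^2\eta,\eta\rangle\le(r-\epsilon)^{-2n}\|\eta\|^2$ (since $t\,g_n(t)^2\le(r-\epsilon)^{-2n}$), so $\limsup_n\|\eta_n\|^{1/n}\le(r-\epsilon)^{-1}$; and $S^n\eta_n=h_n(S^nS^{*n})\eta$ with $h_n(t)=t\,g_n(t)=\min(1,t(r-\epsilon)^{-2n})$, i.e.\ $S^n\eta_n=\tilde h_n(C_n)\eta$ where $\tilde h_n(t)=\min(1,(t/(r-\epsilon))^{2n})$. Choosing a fixed continuous $g$ with $0\le g\le1$, $g=0$ near $0$ and $g=1$ on $[r,\infty)$, one has $g\le\tilde h_n$ and $g(C)=q$, so $\langle\tilde h_n(C_n)\eta,\eta\rangle\ge\langle g(C_n)\eta,\eta\rangle\to\|\eta\|^2$; as $0\le\tilde h_n(C_n)\le q$, this forces $\|S^n\eta_n-\eta\|^2\le\|\eta\|^2-\langle\tilde h_n(C_n)\eta,\eta\rangle\to0$. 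A diagonalization over $\epsilon\downarrow0$, using $S^n\eta_n=T^n\eta_n$ on $\KEu$, places $\eta$ in $\mathcal W$. I expect this last construction to be the main obstacle: strong-operator convergence of $|T^n|^{1/n}$ carries no rate, so a naive approximate inverse of $S^n$ (via its polar decomposition, or via crude norm bounds in the previous paragraph) does not simultaneously yield $\|\eta_n\|\lesssim(1/r)^n$ and $S^n\eta_n\to\eta$; the truncated functional calculus $g_n$ is what reconciles the two. A related, smaller point — handled above by interposing continuous functions and perturbing the radius — is that $\mathbf 1_{[0,r]}(|T^n|^{1/n})$ need not converge strongly to $\mathbf 1_{[0,r]}(A)$.
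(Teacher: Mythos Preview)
The paper does not prove this proposition at all: it is quoted from Haagerup and Schultz~\cite{HS09} (``The following result is from the essential construction, found in~\cite{HS09}\ldots'') and serves as background input. So there is no paper proof to compare against; the relevant question is only whether your argument is correct.

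Your argument is correct. A few remarks. First, your proof of the disk identity uses Theorem~\ref{thm:sotq} as input; be aware that in~\cite{HS09} the logical order is essentially the reverse (the subspaces in Proposition~\ref{prop:disks} are how the projections $P(T,r\overline{\mathbb D})$ are \emph{constructed}, and the strong convergence of $|T^n|^{1/n}$ is established in tandem), so your derivation is valid within the paper's presentation but would be circular in the original development. Second, in the inclusion $q\HEu\subseteq\mathcal W$, the phrase ``$g=0$ near $0$'' is too weak for the stated inequality $g\le\tilde h_n$ to hold for all $n$: since $\tilde h_n(t)\to0$ for every $t<r-\epsilon$, you need $g$ to vanish on $[0,r-\epsilon]$. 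This is harmless (you have fixed $\epsilon$ before choosing $g$), but it should be stated precisely. Third, your duality argument for $\mathcal W\subseteq q\HEu$ (pairing a witness $(\eta_n)$ for $\eta$ against a witness $(\xi_n)$ for $\xi\in(1-q)\HEu$ coming from the disk identity for $R=((1-q)T(1-q))^*$) is clean and is essentially the mechanism behind the relation $P(T,A_{r,\infty})=1-P(T^*,A_{0,r})^{\perp}$ that Haagerup and Schultz exploit; your self-contained treatment via the truncated functional calculus $g_n$ for the reverse inclusion is a nice alternative to their approach.
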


The Haagerup--Schultz projections satisfy nice lattice properties, as shown in~\cite{S06}:
\begin{thmsubs}\label{thm:hsproj-lattice}
Let $B_1,B_2,... \in \Afr$.
Then 
\begin{align*}
\bigvee_{n=1}^\infty P(T,B_n)&= P \left( T, \bigcup_{n=1}^\infty B_n \right) \\
\bigwedge_{n=1}^\infty P(T,B_n)&= P \left( T, \bigcap_{n=1}^\infty B_n \right)
\end{align*}
\end{thmsubs}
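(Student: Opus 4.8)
The plan is to prove the intersection (meet) identity directly and then obtain the union (join) identity from it via a complementation duality between the Haagerup--Schultz projections of $T$ and those of $T^*$. The meet is the tractable direction: passing from a $T$-invariant projection to a smaller $T$-invariant subprojection can only ``shed'' Brown mass, so the Brown measure of the compression remains concentrated where it started. For joins the analogous step is delicate, since enlarging an invariant projection could a priori create Brown mass outside the prescribed set; the duality is what lets the join borrow the good behaviour of the meet.

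For the meet identity, set $e=\bigwedge_{n}P(T,B_n)$ and $B=\bigcap_{n}B_n$. One inclusion is immediate: since $B\subseteq B_n$, the monotonicity in Theorem~\ref{thm:hsproj} gives $P(T,B)\le P(T,B_n)$ for every $n$, hence $P(T,B)\le e$. For the reverse, $e\HEu=\bigcap_n P(T,B_n)\HEu$ is $T$-invariant, being an intersection of $T$-invariant subspaces. Fix $n$, write $f=P(T,B_n)$ and $S=fTf\in f\Mcal f$; because $e\le f$ and $e$ is $T$-invariant, $e$ is an $S$-invariant projection in the corner $f\Mcal f$, with $eSe=eTe$. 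I would then invoke the block--triangular decomposition of the Brown measure: for any operator $A$ and any $A$-invariant projection $r$ one has $\mu_A=\tau(r)\,\mu_{rAr}+\tau(1-r)\,\mu_{(1-r)A(1-r)}$, with the corner Brown measures computed in the respective corners, so in particular $\tau(r)\,\mu_{rAr}\le\mu_A$ as measures. Applying this with $A=S$ and $r=e$, and using that $\mu_S$ is concentrated in $B_n$ by Theorem~\ref{thm:hsproj}(iii), we conclude that $\mu_{eTe}$ is concentrated in $B_n$. Since $n$ was arbitrary, $\mu_{eTe}$ is concentrated in $\bigcap_n B_n=B$, and Theorem~\ref{thm:hsproj}(v) gives $e\le P(T,B)$. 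This proves the meet identity.

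Next I would establish the duality $1-P(T,B)=P(T^*,\widetilde{B^c})$, where $\widetilde E:=\{\bar z:z\in E\}$ and $B^c=\Cpx\setminus B$; here $\mu_{T^*}$ is the pushforward of $\mu_T$ under conjugation (immediate from the defining equation in Theorem~\ref{thm:brownmeas}, since $\mu_{|(T-\bar\lambda)^*|}=\mu_{|T-\bar\lambda|}$ by traciality), and corners of $T^*$ are adjoints of the corresponding corners of $T$. Writing $p=P(T,B)$: taking adjoints in $Tp=pTp$ shows $1-p$ is $T^*$-invariant, and by Theorem~\ref{thm:hsproj}(iv) for $(T,B)$ the corner $(1-p)T^*(1-p)=((1-p)T(1-p))^*$ has Brown measure concentrated in $\widetilde{B^c}$, so Theorem~\ref{thm:hsproj}(v) for $T^*$ gives $1-p\le P(T^*,\widetilde{B^c})$. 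The reverse inequality follows by running the same reasoning with the roles of $(T,B)$ and $(T^*,\widetilde{B^c})$ interchanged (using $\widetilde{\widetilde E}=E$), which yields $P(T,B)\le 1-P(T^*,\widetilde{B^c})$. Hence $1-P(T,B)=P(T^*,\widetilde{B^c})$. The join identity now drops out of De Morgan's law and the meet identity applied to $T^*$:
\[
\bigvee_{n}P(T,B_n)=1-\bigwedge_{n}\bigl(1-P(T,B_n)\bigr)=1-\bigwedge_{n}P\bigl(T^*,\widetilde{B_n^c}\bigr)=1-P\Bigl(T^*,\textstyle\bigcap_n\widetilde{B_n^c}\Bigr),
\]
and since $\bigcap_n\widetilde{B_n^c}=\widetilde{(\bigcup_n B_n)^c}$, one more application of the duality identifies the right-hand side with $P\bigl(T,\bigcup_n B_n\bigr)$.

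The one piece of external machinery the argument leans on --- and the step where, if it were missing, everything would stall --- is the block--triangular Brown-measure decomposition $\mu_A=\tau(r)\,\mu_{rAr}+\tau(1-r)\,\mu_{(1-r)A(1-r)}$ for an $A$-invariant projection $r$; it is standard (it follows from multiplicativity of the Fuglede--Kadison determinant applied to the block-upper-triangular form of $A-\lambda$), and it carries the entire meet step. Everything after that is bookkeeping with properties (i)--(v) of Theorem~\ref{thm:hsproj}. The reason the join is attacked through complementation rather than head-on is precisely the asymmetry noted above: a direct imitation of the meet argument would force one to prove that forming $\bigvee_n P(T,B_n)$ introduces no Brown mass outside $\bigcup_n B_n$, which is the subtle point, whereas passing to $T^*$ and complements replaces that with the easy ``shedding'' behaviour.
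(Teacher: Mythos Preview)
The paper does not actually prove this statement; it is quoted as a known result of Schultz~\cite{S06} in the preliminaries, so there is no ``paper's own proof'' to compare against. Your approach --- prove the meet directly via the block-triangular Brown-measure identity, then deduce the join from a complementation duality between $P(T,\cdot)$ and $P(T^*,\cdot)$ --- is a sound strategy, and the meet half is correct as written.

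There is, however, a genuine slip in your duality step. You correctly show $1-p\le q$, where $p=P(T,B)$ and $q=P(T^*,\widetilde{B^c})$, by using property~(iv) for $(T,B)$ and property~(v) for $T^*$. You then assert that ``running the same reasoning with the roles of $(T,B)$ and $(T^*,\widetilde{B^c})$ interchanged'' yields $p\le 1-q$. It does not: the symmetric argument starts from $q$, uses property~(iv) for $(T^*,\widetilde{B^c})$ to see that $(1-q)T(1-q)$ has Brown measure in $B$, and property~(v) for $T$ to conclude $1-q\le p$, which is again $1-p\le q$. The two runs give the \emph{same} inequality, not opposite ones. The fix is immediate and you have the ingredients at hand: since $\mu_{T^*}$ is the conjugate-pushforward of $\mu_T$, one has
\[
\tau(q)=\mu_{T^*}(\widetilde{B^c})=\mu_T(B^c)=1-\mu_T(B)=1-\tau(p)=\tau(1-p),
\]
and in a finite von Neumann algebra $1-p\le q$ together with $\tau(1-p)=\tau(q)$ forces $1-p=q$. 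With this repair the De~Morgan computation goes through and the join identity follows.
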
 

They also behave well with respect to compressions and similarities (Theorem 2.4.4, Theorem 12.3 in~\cite{CDSZ17}):
\begin{thmsubs}\label{thm:hsproj-sim}
Let $Q \in \Mcal$ be a non-zero $T$-invariant projection, and suppose $A\in \Mcal$ is invertible.  Then, for all $B \in \Afr$, we have 
\begin{enumerate}[(i)]
\item $P(T,B) \wedge Q = P^{(Q)}(TQ,B)$,
\item  $\mu_{ATA^{-1}} = \mu_T$,
\item $P(ATA^{-1},B)\HEu = AP(T,B)\HEu$,
\end{enumerate}
where $P^{(Q)}$ denotes the Haagerup--Schultz projection computed in the compression $Q\Mcal Q$. 
\end{thmsubs}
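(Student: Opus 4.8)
The plan is to test each of the three assertions against the uniqueness part of Theorem~\ref{thm:hsproj}: a $T$-invariant projection $p$ must equal $P(T,B)$ once the Brown measure of $pTp$ in the corner $p\Mcal p$ is concentrated in $B$ and $p$ is maximal with this property in the sense of clause~(v). Two small facts will recur. First, the lattice identity $P(T,B)\wedge P(T,B^c)=P(T,B\cap B^c)=P(T,\emptyset)=0$ from Theorem~\ref{thm:hsproj-lattice}. Second, its consequence: if a projection $s$ is $T$-invariant, satisfies $s\le P(T,B)$, and has $\mu_{sTs}$ concentrated in $B^c$, then $s=0$ (clause~(v) gives $s\le P(T,B^c)$, then intersect). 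I would prove~(ii) first, since~(iii) applies it inside corners of $\Mcal$: writing $ATA^{-1}-\lambda=A(T-\lambda)A^{-1}$ and using multiplicativity of the Fuglede--Kadison determinant $\Delta$ (normalized so that $\log\Delta(S)=\int_0^\infty\log t\,d\mu_{|S|}(t)$, with value $-\infty$ allowed), one gets $\Delta(ATA^{-1}-\lambda)=\Delta(A)\Delta(T-\lambda)\Delta(A)^{-1}=\Delta(T-\lambda)$ for every $\lambda$; hence the defining identity of Theorem~\ref{thm:brownmeas} holds simultaneously for $ATA^{-1}$ and for $T$, and uniqueness of the Brown measure gives $\mu_{ATA^{-1}}=\mu_T$.

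For~(i) I would first note that $P(T,B)\HEu\cap Q\HEu$ is $T$-invariant, so $p:=P(T,B)\wedge Q$ is $T$-invariant and, being $\le Q$, is $TQ$-invariant inside $Q\Mcal Q$. To see that $\mu_{pTp}$ (computed in $p\Mcal p$) is concentrated in $B$, set $s:=P^{(p)}(pTp,B^c)$; since $ps=s$ one has $sTs=s(pTp)s$ and $Ts=(pTp)s$, so $s$ is $T$-invariant, $\mu_{sTs}$ is concentrated in $B^c$, and $s\le p\le P(T,B)$; by the remark above $s=0$, so $\mu_{pTp}(B^c)=0$. Clause~(v) applied in $Q\Mcal Q$ now yields $p\le P^{(Q)}(TQ,B)$. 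Conversely, $r:=P^{(Q)}(TQ,B)\le Q$ is $T$-invariant (being $\le Q$ and $TQ$-invariant, by the same manipulation), and $\mu_{rTr}$ is concentrated in $B$, so clause~(v) applied in $\Mcal$ gives $r\le P(T,B)$, whence $r\le P(T,B)\wedge Q=p$. Thus $p=P^{(Q)}(TQ,B)$.

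For~(iii), set $S=ATA^{-1}$ and $p=P(T,B)$, and let $\tilde p$ be the projection onto the subspace $Ap\HEu$, which is closed because $A$ is a homeomorphism; since $SAp\HEu=ATp\HEu\subseteq Ap\HEu$, $\tilde p$ is $S$-invariant. Take the polar decomposition $Ap=u\,|Ap|$ with $h:=|Ap|=(pA^*Ap)^{1/2}$ invertible in $p\Mcal p$, $u^*u=p$, $uu^*=\tilde p$. Using $(1-p)Tp=0$ (valid because $p$ is $T$-invariant), a direct computation gives $pA^*ATp=(pA^*Ap)(pTp)=h^2(pTp)$ and hence $\tilde pS\tilde p=(uhu^*)\bigl(u(pTp)u^*\bigr)(uhu^*)^{-1}$ inside $\tilde p\Mcal\tilde p$. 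Since $\operatorname{Ad}_u$ is a trace-preserving $*$-isomorphism of $p\Mcal p$ onto $\tilde p\Mcal\tilde p$ and $uhu^*$ is invertible there, part~(ii) applied in $\tilde p\Mcal\tilde p$ gives $\mu_{\tilde pS\tilde p}=\mu_{pTp}$, concentrated in $B$ by Theorem~\ref{thm:hsproj}(iii). Clause~(v) then yields $\tilde p\le P(S,B)$. Running the same argument with $(T,A)$ replaced by $(S,A^{-1})$ (note $A^{-1}SA=T$) shows the projection onto $A^{-1}P(S,B)\HEu$ is $\le P(T,B)$, i.e.\ $P(S,B)\le\tilde p$; hence $P(ATA^{-1},B)\HEu=P(S,B)\HEu=\tilde p\HEu=AP(T,B)\HEu$. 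The degenerate cases $p=0$ and $p=1$ follow directly from~(ii).

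The main obstacle is the step in~(iii) that identifies $Ap\HEu$ as the Haagerup--Schultz subspace of $ATA^{-1}$: the orthogonal complement of $Ap\HEu$ is not the $A$-image of $(1-p)\HEu$ unless $A$ is unitary, so clause~(iv) of Theorem~\ref{thm:hsproj} cannot be checked on $\tilde p$ directly, and one is forced to argue through the maximality clause~(v) together with the symmetry between $T$ and $ATA^{-1}$. The accompanying technical point is verifying that the compression $\tilde pS\tilde p$ is, up to a trace-preserving $*$-isomorphism, a similarity transform of $pTp$; the polar decomposition of $Ap$ and the identity $(1-p)Tp=0$ are exactly what make this computation go through. (One could instead prove~(iii) by first treating closed disks via Proposition~\ref{prop:disks}, translating $T$ to reach all closed disks, and then extending to $\Afr$ by a monotone-class argument using Theorem~\ref{thm:hsproj-lattice}; I find the maximality route shorter and it reuses the mechanism already needed for~(i).)
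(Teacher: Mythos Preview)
The paper does not prove Theorem~\ref{thm:hsproj-sim}; it records it as a known fact, citing~\cite{CDSZ17}. Your argument, by contrast, is a correct self-contained proof. Part~(ii) via multiplicativity of the Fuglede--Kadison determinant is standard and correct. Part~(i) via the maximality clause~(v) of Theorem~\ref{thm:hsproj} together with the lattice identity $P(T,B)\wedge P(T,B^c)=P(T,\emptyset)=0$ is clean and works exactly as written. Your part~(iii) is also correct in detail: the polar decomposition $Ap=uh$ with $h=(pA^*Ap)^{1/2}$ invertible in $p\Mcal p$ does give $\tilde pS\tilde p=u\bigl(h(pTp)h^{-1}\bigr)u^*$, and since $x\mapsto uxu^*$ is a trace-preserving $*$-isomorphism $p\Mcal p\to\tilde p\Mcal\tilde p$, part~(ii) applied in the corner yields $\mu_{\tilde pS\tilde p}=\mu_{pTp}$, after which clause~(v) and the symmetric argument with $(S,A^{-1})$ in place of $(T,A)$ finish. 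The obstacle you flag---that $(Ap\HEu)^\perp\ne A(1-p)\HEu$ in general, so clause~(iv) cannot be checked on $\tilde p$ directly---is real, and the maximality/symmetry route is the natural way around it.
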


Joint Brown measures and Haagerup--Schultz projections can also be defined for commuting tuples of operators. (See~\cite{S06} and~\cite{CDSZ17}). 
\begin{thmsubs}\label{thm:brown-comm}
Let $S,T\in \Mcal$ be commuting operators. Then, there exists a unique compactly supported Borel probability measure $\mu_{S,T}$ on $\Cpx^2$ such that, for all $\lambda_i \in \Cpx$, 
\[ 
\tau(\log| \lambda_1 S + \lambda_2 T - 1|) = \int_{\Cpx^2} \log|\lambda_1 z + \lambda_2 w -1| d\mu_{S,T}(z,w).
\]
\end{thmsubs}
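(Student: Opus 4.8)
The plan is to dispose of uniqueness by a Cram\'er--Wold type reduction to the single-variable Brown measure (Theorem~\ref{thm:brownmeas}), and to obtain existence by simultaneously bringing the commuting pair $(S,T)$ to upper triangular form and taking $\mu_{S,T}$ to be the joint spectral distribution of the (commuting, normal) diagonal parts.

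\textbf{Uniqueness.} Suppose $\mu,\mu'$ are compactly supported probability measures on $\Cpx^2$ satisfying the stated identity. Replacing $(\lambda_1,\lambda_2)$ by $(\lambda_1/\eta,\lambda_2/\eta)$ for $\eta\in\Cpx\setminus\{0\}$ and using $\log|c a|=\log|c|+\log|a|$ gives, for all $(\lambda_1,\lambda_2)$ and all $\eta\ne 0$,
\[
\int_\Cpx\log|u-\eta|\,d\nu_{\lambda_1,\lambda_2}(u)=\tau\big(\log|\lambda_1 S+\lambda_2 T-\eta|\big)=\int_\Cpx\log|u-\eta|\,d\nu'_{\lambda_1,\lambda_2}(u),
\]
where $\nu_{\lambda_1,\lambda_2},\nu'_{\lambda_1,\lambda_2}$ are the pushforwards of $\mu,\mu'$ under $\phi_{\lambda_1,\lambda_2}\colon(z,w)\mapsto\lambda_1 z+\lambda_2 w$. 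Thus these two compactly supported measures have the same logarithmic potential, hence coincide (and both equal $\mu_{\lambda_1 S+\lambda_2 T}$, which is how the joint measure encodes the one-variable data). Finally, a compactly supported measure $\rho$ on $\Cpx^2\cong\Reals^4$ is determined by the family $\{(\phi_{\lambda_1,\lambda_2})_*\rho\}$, since $\widehat{(\phi_{\lambda_1,\lambda_2})_*\rho}(s)=\widehat\rho(\overline{s}\lambda_1,\overline{s}\lambda_2)$ and $(\overline s\lambda_1,\overline s\lambda_2)$ ranges over all of $\Cpx^2$ as $s,\lambda_1,\lambda_2$ range over $\Cpx$, so the full Fourier transform $\widehat\rho$ is recovered. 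Hence $\mu=\mu'$.

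\textbf{Existence.} Beginning from the Schur-type triangularization of \cite{DSZ15}, write $S=N_S+Q_S$ with $N_S$ normal, $Q_S$ s.o.t.-quasinilpotent, both triangular with respect to a nest $\mathcal{P}$ of Haagerup--Schultz projections of $S$; each member of $\mathcal{P}$ is $S$-hyperinvariant, hence (as $T$ commutes with $S$) $T$-invariant, so $T$ too is triangular with respect to $\mathcal{P}$. Refining $\mathcal{P}$ inside the diagonal algebra so as also to triangularize the diagonal part of $T$ --- checking the refinement preserves $S$-invariance of the nest members --- produces a common nest with respect to which $S=N_S+Q_S$ and $T=N_T+Q_T$, where $N_S,N_T$ are the normal diagonal parts and $Q_S,Q_T$ the strictly-upper-triangular (hence, by \cite{DSZ15}, s.o.t.-quasinilpotent) parts, with distributions of $N_S,N_T$ equal to $\mu_S,\mu_T$. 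Since the diagonal ``symbol'' map is multiplicative on triangular operators, $[N_S,N_T]$ is the diagonal part of $[S,T]=0$, so $N_S$ and $N_T$ commute; let $\mu_{S,T}$ be the joint spectral distribution of the commuting normal pair $(N_S,N_T)$ relative to $\tau$. For any $\lambda_1,\lambda_2$, the operator $\lambda_1 S+\lambda_2 T=(\lambda_1 N_S+\lambda_2 N_T)+(\lambda_1 Q_S+\lambda_2 Q_T)$ is triangular with normal diagonal part $D:=\lambda_1 N_S+\lambda_2 N_T$ and strictly-upper-triangular part; since the Brown measure of such an operator equals that of its diagonal part (\cite{DSZ15}), $\mu_{\lambda_1 S+\lambda_2 T}$ is the distribution of $D$. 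Comparing logarithmic potentials at $1$,
\[
\tau\big(\log|\lambda_1 S+\lambda_2 T-1|\big)=\tau\big(\log|D-1|\big)=\int_{\Cpx^2}\log|\lambda_1 z+\lambda_2 w-1|\,d\mu_{S,T}(z,w),
\]
which is the desired identity.

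\textbf{Main obstacle.} The delicate step is the construction of the common nest with commuting normal diagonal parts, i.e.\ extending the triangularization of \cite{DSZ15} from one operator to the commuting pair $(S,T)$: one must refine the nest adapted to $S$ so that it also diagonalizes $T$ without destroying the triangular form of $S$, and check that the resulting diagonal part of $T$ is genuinely normal (equivalently, that the relevant diagonal algebra can be taken abelian); the two supporting facts invoked above --- multiplicativity of the diagonal symbol on triangular operators, and insensitivity of the Brown measure to a strictly-upper-triangular perturbation --- are in the spirit of \cite{DSZ15} and should be extracted from its methods. An alternative route to existence, bypassing triangularization, is to define a premeasure on rectangles by $\mu_{S,T}(B_1\times B_2):=\tau\big(P(S,B_1)\wedge P(T,B_2)\big)$ and to verify countable additivity using Theorems~\ref{thm:hsproj-lattice} and~\ref{thm:hsproj-sim} together with additivity of Brown measure across an invariant projection, before extending by Carath\'eodory; there the main difficulty is the bookkeeping forced by the fact that Haagerup--Schultz projections of disjoint sets are merely nested, not orthogonal.
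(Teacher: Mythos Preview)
The paper does not give its own proof of Theorem~\ref{thm:brown-comm}; this result is quoted from Schultz~\cite{S06} as background, so there is no in-paper argument to compare against.

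Your uniqueness argument is correct. The complex-linear pushforwards $(\phi_{\lambda_1,\lambda_2})_*\mu_{S,T}$ must coincide with the single-variable Brown measures $\mu_{\lambda_1 S+\lambda_2 T}$, and the Cram\'er--Wold style Fourier-transform reasoning recovers $\mu_{S,T}$ from these (up to a harmless sign/conjugation convention in the formula you wrote).

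Your existence argument, however, has a genuine gap exactly where you flag it. Producing a single nest with respect to which both $S$ and $T$ are upper triangular with commuting \emph{normal} diagonal parts is not a routine refinement of~\cite{DSZ15}; it is essentially the main theorem of~\cite{CDSZ17}. Moreover, the proof in~\cite{CDSZ17} takes the joint Brown measure and the joint Haagerup--Schultz projections of~\cite{S06} as input, so appealing to that machinery here would be circular. Working only from~\cite{DSZ15}, your sketch does not establish that the Haagerup--Schultz projections of the diagonal part $T_0=E_{\mathcal D}(T)$, taken inside the diagonal algebra $\mathcal D$ of the first nest, yield a \emph{nest} refining $\mathcal P$ (one must interleave them correctly with a continuous $\mathcal P$), nor that the resulting diagonal of $T$ is normal without a further choice. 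These are precisely the technical points that occupy~\cite{CDSZ17}. Your alternative route via a premeasure on rectangles is closer in spirit to how the joint projections are actually assembled, but you likewise leave its countable-additivity verification open.

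For comparison, Schultz's original construction in~\cite{S06} avoids triangularization altogether: she works directly with the function of $(\lambda_1,\lambda_2)$ given by the Fuglede--Kadison log-determinant and uses its analytic properties to produce the representing measure on $\Cpx^2$, much as Brown does in one variable. That argument predates, and is logically independent of, the simultaneous upper-triangular forms you are trying to invoke.
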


\begin{thmsubs}\label{thm:hsproj-comm}
For commuting operators $S,T \in \Mcal$, and a Borel set $B \subset \Cpx^2$, there is a projection 
$P((S,T):B) \in \Mcal$ which is $(S,T)$-hyperinvariant, and which satisfies the following:
\begin{enumerate}[(i)]
\item For $B_1,B_2 \subset \Cpx$, $P((S,T):B_1 \times B_2) = P(S,B_1) \wedge P(T,B_2) $
\item $P((S,T):\cdot)$ satisfies lattice properties analogous to Theorem \ref{thm:hsproj-lattice}. 
\item For a Borel set $B \subset \Cpx^2$, with $p=P((S,T):B)$, if $0<p<1$, the Brown measure  of $(Sp,Tp)$ and $((1-p)S,(1-p)T)$, computed in the compressions $p\Mcal p$ and $(1-p)\Mcal (1-p)$ respectively, are concentrated in $B$, and $B^c$. 
\item $\mu_{(S,T)}(B) = \tau(P((S,T):B))$.
\end{enumerate} 
\end{thmsubs}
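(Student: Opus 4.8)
The plan is to mimic, in two variables, the single-variable Haagerup--Schultz construction of \cite{HS09}, using the joint Brown measure $\mu_{(S,T)}$ of Theorem~\ref{thm:brown-comm} as the source of quantitative control. First I would treat \emph{rectangles}: for $B=B_1\times B_2$ with $B_1,B_2\in\Afr$, \emph{define} $P((S,T):B):=P(S,B_1)\wedge P(T,B_2)$. This is legitimate and already carries several of the required features. Since $q:=P(S,B_1)$ is $S$-hyperinvariant, it is invariant under $\{S\}'$, which contains $T$ (because $ST=TS$); hence $q$ is $T$-invariant and Theorem~\ref{thm:hsproj-sim}(i) gives $P(S,B_1)\wedge P(T,B_2)=P^{(q)}(Tq,B_2)$ (and symmetrically). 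Moreover, any $A$ commuting with both $S$ and $T$ leaves $P(S,B_1)\HEu$ and $P(T,B_2)\HEu$ invariant, hence leaves their intersection invariant, so $P((S,T):B)$ is $(S,T)$-hyperinvariant; and property~(i) is just the definition.

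The crux is the trace identity on rectangles. The main tool is the factorization of the Fuglede--Kadison determinant of an operator that is block upper-triangular with respect to a flag of invariant projections (see \cite{HS09},\cite{DSZ15}): if $0=e_0\le e_1\le\cdots\le e_n=1$ are projections, each invariant for both $S$ and $T$, and $g_i=e_i-e_{i-1}$, then $\tau(\log|\lambda_1 S+\lambda_2 T-1|)=\sum_i\tau(\log|g_i(\lambda_1 S+\lambda_2 T-1)g_i|)$ for all $\lambda_1,\lambda_2\in\Cpx$. Since the $(2,2)$-block of a product of block upper-triangular operators is the product of the $(2,2)$-blocks, $g_iSg_i$ and $g_iTg_i$ again commute in $g_i\Mcal g_i$, so the right-hand side equals $\sum_i\tau(g_i)\int\log|\lambda_1 z+\lambda_2 w-1|\,d\mu_{(g_iSg_i,g_iTg_i)}$, and the uniqueness clause of Theorem~\ref{thm:brown-comm} yields the decomposition $\mu_{(S,T)}=\sum_i\tau(g_i)\,\mu_{(g_iSg_i,\,g_iTg_i)}$. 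Applying this to the flag $e_i=P(S,C_1\cup\cdots\cup C_i)$ for a finite Borel partition $\{C_i\}$ of $\Cpx$, the single-variable theory (Theorem~\ref{thm:hsproj}(iii),(iv) applied in the corner $e_i\Mcal e_i$) shows the first marginal of $\mu_{(g_iSg_i,g_iTg_i)}$ is concentrated in $C_i$; iterating inside each $g_i\Mcal g_i$ with the flag $P^{(g_i)}(g_iTg_i,D_1\cup\cdots\cup D_j)$ localizes the second marginal in $D_j$. Refining the partitions, $\mu_{(S,T)}$ is carried by arbitrarily small rectangles, and unwinding the identities gives $\tau(P(S,B_1)\wedge P(T,B_2))=\mu_{(S,T)}(B_1\times B_2)$, i.e.\ (iv) for rectangles, together with (iii) for rectangles (for the complement corner, combine the flag decomposition $\mu_{(S,T)}=\tau(p)\mu_{(Sp,Tp)}+\tau(1-p)\mu_{((1-p)S,(1-p)T)}$ with the rectangle case of (iv)).

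Next I would extend to all Borel sets. Finite disjoint unions of rectangles form a ring generating the Borel $\sigma$-algebra of $\Cpx^2$; on $B=R_1\sqcup\cdots\sqcup R_k$ put $P((S,T):B)=\bigvee_j P((S,T):R_j)$, and check that this is independent of the decomposition and satisfies $\tau(P((S,T):B))=\sum_j\mu_{(S,T)}(R_j)=\mu_{(S,T)}(B)$ — the additivity of the trace again being extracted from the flag/determinant decomposition rather than from naive orthogonality, since Haagerup--Schultz projections of disjoint sets are only trace-orthogonal. One then extends to open sets (increasing countable unions of ring elements, using joins and Theorem~\ref{thm:hsproj-lattice}-type limits), to closed sets (complements and decreasing intersections, using meets), and finally to all Borel sets via inner and outer regularity of the finite Borel measure $\mu_{(S,T)}$ on the Polish space $\Cpx^2$; uniqueness at each stage is forced by (iv). The lattice property (ii) then follows from the single-variable lattice theorem and the construction, and a joint analogue of Theorem~\ref{thm:hsproj-sim}(i) proved along the way upgrades (iii) to arbitrary $B$.

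I expect the main obstacle to be exactly this passage through (iv): organizing the two one-variable flags — one from $S$, one from $T$ inside the corners — so that they are compatible and the Fuglede--Kadison determinant factorizes cleanly over arbitrarily fine rectangular grids, and handling the failure of orthogonality of Haagerup--Schultz projections of disjoint sets when summing traces. Once (iv) is secured, hyperinvariance, the lattice properties, and the corner Brown-measure statements follow with comparatively little extra work.
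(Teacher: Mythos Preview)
The paper does not prove this statement; it is quoted in the preliminaries section as a known result from the literature, with the attribution ``(See~\cite{S06} and~\cite{CDSZ17})'' immediately preceding it. There is therefore no proof in the paper to compare your proposal against.

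That said, your outline is broadly in the spirit of how the result is actually established in those references: define the projection on rectangles via the meet $P(S,B_1)\wedge P(T,B_2)$, use flag/determinant factorization to match traces with the joint Brown measure, and extend to general Borel sets by lattice and regularity arguments. One point to be careful about is your treatment of complements and closed sets: passing from a ring of rectangles to all Borel sets purely by inner/outer regularity of $\mu_{(S,T)}$ presupposes that the projection you build for an open set and the one you build for its complement actually exhaust the space in the right way (i.e., that (iii) and (iv) hold simultaneously), and this needs the maximality characterization analogous to Theorem~\ref{thm:hsproj}(v) in the joint setting, not just the trace identity. In \cite{CDSZ17} this is handled by first proving a two-variable analogue of the compression result (your ``joint analogue of Theorem~\ref{thm:hsproj-sim}(i)'') rather than deferring it to the end. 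Apart from that ordering issue, the plan is sound.
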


The joint Brown measures and Haagerup--Schultz projections behave well under pushforwards. In particular, (Remark 6.5 in~\cite{S06}):

\begin{propsubs}\label{prop:hscomm}
Let $S,T \in \Mcal$ be commuting operators. Let $a:\Cpx^2 \to \Cpx$ denote the addition map. Then, for any $B \in \Afr$, we have 
\[ 
P(S+T,B) = P( (S,T) : a^{-1}(B) ).
\]
Hence, if $T$ is s.o.t.-quasinilpotent, then $P(S+T,B)=P(S,B)$.
\end{propsubs}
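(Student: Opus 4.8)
The plan is to show that $p:=P\bigl((S,T):a^{-1}(B)\bigr)$ is exactly the Haagerup--Schultz projection $P(S+T,B)$, by verifying $p\le P(S+T,B)$ and $\tau(p)=\tau\bigl(P(S+T,B)\bigr)$; faithfulness of $\tau$ then forces equality. (Note $a^{-1}(B)$ is Borel since $a$ is continuous, so $p$ is defined.) The key preliminary step I would establish is a pushforward identity for Brown measures: for every commuting pair $S',T'$ in a tracial von Neumann algebra, $\mu_{S'+T'}=a_*\mu_{S',T'}$. To get this, specialize the defining identity of Theorem~\ref{thm:brown-comm} to $\lambda_1=\lambda_2=t$ and use $|tX-1|=|t|\,|X-1/t|$ for $t\neq0$; after cancelling $\log|t|$ one obtains $\tau(\log|(S'+T')-\lambda|)=\int_{\Cpx^2}\log|(z+w)-\lambda|\,d\mu_{S',T'}(z,w)=\int_\Cpx\log|u-\lambda|\,d(a_*\mu_{S',T'})(u)$ for all $\lambda\neq0$, hence for all $\lambda$ by a standard limiting argument, so the uniqueness clause of Theorem~\ref{thm:brownmeas} gives $\mu_{S'+T'}=a_*\mu_{S',T'}$.

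With the pushforward identity in hand, the rest is quick. Because $p$ is $(S,T)$-hyperinvariant it is in particular invariant under $S$ and under $T$, so $pSp=Sp$, $pTp=Tp$, and hence $(S+T)p=p(S+T)p$; thus $p$ is $(S+T)$-invariant. If $p\neq0$, then as an element of $p\Mcal p$ the operator $p(S+T)p$ is the sum $Sp+Tp$ of the commuting pair whose joint Brown measure, by Theorem~\ref{thm:hsproj-comm}(iii), is concentrated in $a^{-1}(B)$; applying the pushforward identity inside the corner $p\Mcal p$ shows the Brown measure of $p(S+T)p$ is concentrated in $B$ (using $a^{-1}(B^c)=(a^{-1}(B))^c$). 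Property~(v) of Theorem~\ref{thm:hsproj}, applied to $P(S+T,B)$, now yields $p\le P(S+T,B)$ (trivially so if $p=0$). On the other hand $\tau(p)=\mu_{S,T}\bigl(a^{-1}(B)\bigr)=(a_*\mu_{S,T})(B)=\mu_{S+T}(B)=\tau\bigl(P(S+T,B)\bigr)$, using Theorem~\ref{thm:hsproj-comm}(iv), the pushforward identity, and Theorem~\ref{thm:hsproj}(ii). Therefore $p=P(S+T,B)$.

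For the final assertion, assume $T$ is s.o.t.-quasinilpotent, so $\mu_T=\delta_0$ and $\tau\bigl(P(T,\{0\})\bigr)=\mu_T(\{0\})=1$, forcing $P(T,\{0\})=1$. Then Theorem~\ref{thm:hsproj-comm}(i) gives $P\bigl((S,T):\Cpx\times\{0\}\bigr)=P(S,\Cpx)\wedge P(T,\{0\})=1$, and so $\mu_{S,T}\bigl(\Cpx\times\{0\}\bigr)=1$ by Theorem~\ref{thm:hsproj-comm}(iv). Since $a^{-1}(B)$ and $B\times\Cpx$ have the same intersection with $\Cpx\times\{0\}$, their symmetric difference is $\mu_{S,T}$-null; as the joint Haagerup--Schultz projections are monotone in the set (Theorem~\ref{thm:hsproj-comm}(ii)) with $\tau\bigl(P((S,T):C)\bigr)=\mu_{S,T}(C)$, they agree on sets differing by a $\mu_{S,T}$-null set, whence $P\bigl((S,T):a^{-1}(B)\bigr)=P\bigl((S,T):B\times\Cpx\bigr)=P(S,B)\wedge P(T,\Cpx)=P(S,B)$. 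Combined with the first part, $P(S+T,B)=P(S,B)$.

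No step is long, so the real content is the pushforward identity $\mu_{S'+T'}=a_*\mu_{S',T'}$ together with its use inside the corner $p\Mcal p$ (harmless, since there the compressions of $S$ and $T$ commute and their sum is the compression of $S+T$) and the measure-theoretic bookkeeping with preimages, complements, and null sets; I expect these to be the only points that need care.
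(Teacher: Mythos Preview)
Your argument is correct. The paper itself does not supply a proof of this proposition; it simply records the statement as coming from Remark~6.5 of Schultz~\cite{S06}. Your approach---deriving the pushforward identity $\mu_{S'+T'}=a_*\mu_{S',T'}$ from the defining relation of the joint Brown measure, then combining the minimality clause~(v) of Theorem~\ref{thm:hsproj} with the trace equality $\tau(p)=\mu_{S,T}(a^{-1}(B))=\mu_{S+T}(B)=\tau(P(S+T,B))$, and handling the s.o.t.-quasinilpotent case via the concentration of $\mu_{S,T}$ on $\Cpx\times\{0\}$ together with a null-set comparison---is the natural one and is in the spirit of Schultz's treatment of pushforwards of joint Brown measures and Haagerup--Schultz projections under polynomial maps.
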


\subsection{Decomposability of operators}

Decomposability of operators was introduced by Foia\c{s}~\cite{F63} and studied by many authors, including Apostol~\cite{A68}.
See the book~\cite{LN00} of Laursen and Neumann for more.

\begin{defisubs}
An operator is said to be {\em decomposable} if, for every pair $(U,V)$ of open sets in the plane whose union is $\Cpx$, there are closed,
$T$-invariant subspaces $\HEu'$ and $\HEu''$ such that $\HEu =\HEu' + \HEu''$
and such that the restriction of $T$ to those have spectra contained in $U$ and $V$ respectively.
\end{defisubs}

In a tracial von Neumann algebra, we have the following equivalent formulation (Proposition 3.1 in~\cite{DNZ18}):

\begin{propsubs}\label{prop:decom}
Let $T \in \Mcal$. Then the following are equivalent:
\begin{enumerate}[(i)]
\item $T$ is decomposable.
\item For all $B \in \Afr$,
\begin{align*}
\sigma(TP(T,B))&\subset \overline{B},  \textrm{ if } P(T,B)\neq 0\\
\sigma\left((1-P(T,B^c))T\right)&\subset \overline{B}, \textrm{ if } P(T,B)\neq 1
\end{align*}
where the spectra are computed in the compressions of $\Mcal$ by $P(T,B)$ and $1-P(T,B^c)$ respectively.  
\end{enumerate}
As a corollary, the support of the Brown measure of a decomposable operator is equal to its spectrum. 
\end{propsubs}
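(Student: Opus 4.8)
The plan is to prove the corollary first, in the standalone form ``$T$ decomposable $\Rightarrow\sigma(T)=\operatorname{supp}\mu_T$'', and then to deduce (i)$\Rightarrow$(ii) from it, handling (ii)$\Rightarrow$(i) separately. Fix $B\in\Afr$ and set $p=P(T,B)$, $r=P(T,B^c)$; by Theorem~\ref{thm:hsproj}(i) the ranges $p\HEu$, $r\HEu$ are $T$-invariant, and since $Tr=rTr$ one has $(1-r)Tr=0$, giving the block form
\[
T=\begin{pmatrix} rTr & rT(1-r)\\ 0 & (1-r)T(1-r)\end{pmatrix}
\]
along $\HEu=r\HEu\oplus(1-r)\HEu$. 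The first line of (ii) concerns the corner $TP(T,B)=pTp$, whose Brown measure sits in $B$ by Theorem~\ref{thm:hsproj}(iii); the second concerns the quotient block $(1-P(T,B^c))T=(1-r)T(1-r)$, whose Brown measure sits in $(B^c)^c=B$ by Theorem~\ref{thm:hsproj}(iv). As $\operatorname{supp}\mu\subseteq\sigma$ always holds, each line of (ii) is precisely the reverse bound $\sigma\subseteq\overline B$, i.e.\ the assertion that these two operators, whose Brown mass already lies in $B$, carry no extra spectrum.

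For the corollary only $\sigma(T)\subseteq\operatorname{supp}\mu_T$ needs proof. Given $\lambda\notin\operatorname{supp}\mu_T$, I would choose a disk $V\ni\lambda$ with $\overline V\cap\operatorname{supp}\mu_T=\emptyset$ and set $U=\Cpx\setminus\overline{D(\lambda,\eps)}$, so $U\cup V=\Cpx$. Decomposability yields closed $T$-invariant $Y,Z$ with $\HEu=Y+Z$ and $\sigma(T\restrict Z)\subseteq V$; taking $Z=X_T(\overline V)$, the spectral maximal space, which is hyperinvariant and hence has projection $P_Z\in W^*(T)\subseteq\Mcal$, the Brown measure of $P_ZTP_Z$ is concentrated in $\overline V$. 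Additivity of the Brown measure across the $T$-invariant projection $P_Z$ (see \cite{HS09}) gives $\operatorname{supp}\mu_{P_ZTP_Z}\subseteq\operatorname{supp}\mu_T$; disjointness of the two supports forces $\tau(P_Z)=0$, hence $P_Z=0$ by faithfulness, so $Z=0$ and $\sigma(T)=\sigma(T\restrict Y)\subseteq U\not\ni\lambda$.

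For (i)$\Rightarrow$(ii) I would invoke the classical principle that restricting a decomposable operator to the range of a Haagerup--Schultz projection, and passing to the induced quotient, again yield decomposable operators---these ranges being spectral maximal spaces, which I would justify from Theorem~\ref{thm:hsproj}(v) together with the corollary just proved (cf.\ \cite{LN00}). Then $pTp$ and $(1-r)T(1-r)$ are decomposable in $p\Mcal p$ and $(1-r)\Mcal(1-r)$, and the corollary applied inside those finite von Neumann algebras, combined with the Brown-measure concentration from Theorem~\ref{thm:hsproj}(iii),(iv), yields $\sigma(pTp)=\operatorname{supp}\mu_{pTp}\subseteq\overline B$ and $\sigma((1-r)T(1-r))\subseteq\overline B$, which are the two lines of (ii).

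For (ii)$\Rightarrow$(i), given an open cover $U\cup V=\Cpx$, the disjoint closed sets $K_1=\Cpx\setminus V\subseteq U$ and $K_2=\Cpx\setminus U\subseteq V$ can be separated by disjoint open $O_1\supseteq K_1$, $O_2\supseteq K_2$ with $\overline{O_1}\subseteq U$, $\overline{O_2}\subseteq V$; then $F=\Cpx\setminus O_2$ and $G=\Cpx\setminus O_1$ are closed with $F\subseteq U$, $G\subseteq V$, $F\cup G=\Cpx$. Putting $\HEu'=P(T,F)\HEu$, $\HEu''=P(T,G)\HEu$, these are $T$-invariant (Theorem~\ref{thm:hsproj}(i)) with $\sigma(T\restrict\HEu')\subseteq F\subseteq U$, $\sigma(T\restrict\HEu'')\subseteq G\subseteq V$ by the first line of (ii), and Theorem~\ref{thm:hsproj-lattice} gives $P(T,F)\vee P(T,G)=P(T,\Cpx)=1$, so $\HEu'+\HEu''$ is dense. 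The crux---and the step I expect to be the main obstacle---is upgrading this to the genuine equality $\HEu=\HEu'+\HEu''$ demanded by the definition. This is delicate because the minimal angle $\alpha(P(T,F),P(T,G))$ may vanish (precisely the phenomenon behind the UNZA property studied later), so the sum of two Haagerup--Schultz ranges need not be closed; what must be shown is that, modulo the large intersection $P(T,F\cap G)\HEu$, the relevant Friedrichs angle is positive. I would obtain this not through angle estimates but by translating (ii) into local-spectral-theoretic terms: reformulating its two lines (over all $B$) as Bishop's property $(\beta)$ for $T$ and for $T^*$, and then invoking the Albrecht--Lange theorem that these two together are equivalent to decomposability---equivalently, that (ii) delivers the decomposition property $(\delta)$, which is exactly the full-sum statement $\HEu=\HEu'+\HEu''$. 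Converting the corner/quotient spectral bounds of (ii) into property $(\delta)$, rather than into a mere dense sum, is the heart of the matter; the corollary, already established, also follows at once from (ii) by taking $B$ to be small closed disks.
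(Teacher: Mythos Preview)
First, note that the paper does not prove this proposition at all: it is quoted verbatim from \cite{DNZ18} as a preliminary fact. So there is no ``paper's own proof'' to compare against, and your attempt must be judged on its own merits.

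Your argument for the corollary is sound. Choosing $Z=\HEu_T(\overline V)$, observing that its projection lies in $\Mcal$ by hyperinvariance, and then killing $Z$ via the Brown-measure additivity formula is a clean way to get $\sigma(T)\subseteq\operatorname{supp}\mu_T$.

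Your route for (i)$\Rightarrow$(ii), however, has a genuine gap. You assert that for a decomposable $T$ the compression $pTp$ (with $p=P(T,B)$) is again decomposable, ``these ranges being spectral maximal spaces.'' But the statement \emph{``$T$ decomposable implies $T\restrict\HEu_T(F)$ decomposable for every closed $F$''} is precisely Apostol's notion of \emph{strong} decomposability, and there are classical counterexamples (due to Albrecht) of decomposable operators that are not strongly decomposable. So you cannot invoke the corollary for $pTp$ without further work. A repair that avoids this detour: first establish the closed case directly---for closed $F$ one has $P(T,F)\HEu=\HEu_T(F)$ and $\sigma(T\restrict\HEu_T(F))\subseteq F$ by standard local spectral theory, no strong decomposability needed. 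Then for general $B$, set $q=P(T,\overline B)\ge p$ and note that $p=P^{(q)}(qTq,B)$ is $qTq$-hyperinvariant in $q\Mcal q$; since $(qTq-\lambda q)^{-1}$ lies in the commutant of $qTq$ for $\lambda\notin\overline B$, it commutes with $p$, and compressing gives invertibility of $pTp-\lambda p$. The second line of (ii) follows by the analogous argument on the quotient side.

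For (ii)$\Rightarrow$(i) you correctly isolate the real difficulty: $P(T,F)\vee P(T,G)=1$ only gives a dense sum, and promoting this to $\HEu=P(T,F)\HEu+P(T,G)\HEu$ is exactly the content. Your proposed detour through Bishop's property $(\beta)$ for $T$ and $T^*$ is a reasonable strategy in spirit---this is indeed how \cite{DNZ18} organizes the argument---but as written it is only a plan: you have not explained how the corner and quotient spectral bounds in (ii) translate into $(\beta)$, nor how one handles the passage from the Haagerup--Schultz projections to the analytic condition in $(\beta)$. That translation is not formal and is where the work lies.
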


The local spectral subspaces of an operator play an important role in decomposability.
We will not go into details here, (see~\cite{LN00} for more information), but we note the following result of Haagerup and Schultz (Proposition~9.2 of~\cite{HS09}),
which we will use.
\begin{propsubs}\label{prop:HSdecomp}
Suppose $T\in\Mcal$ is decomposable.
Then for every $B \in \Afr$, the range of $P(T,B)$ is the closure of the local spectral subspace $\HEu_T(B)$.
\end{propsubs}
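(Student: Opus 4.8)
The plan is to prove the two inclusions $P(T,B)\HEu\subseteq\overline{\HEu_T(B)}$ and $\overline{\HEu_T(B)}\subseteq P(T,B)\HEu$ separately, in each case reducing first to the case that $B=F$ is closed. A decomposable operator has the single-valued extension property and Dunford's property (C), so the local spectra $\sigma_T(\xi)$ are well defined and $\HEu_T(F)$ is already closed whenever $F$ is; thus for closed $F$ the goal is the equality $\HEu_T(F)=P(T,F)\HEu$. The passage from Borel to closed sets I would handle via regularity of the Brown measure together with Theorem~\ref{thm:hsproj-lattice}: inner regularity of $\mu_T$ produces an increasing sequence of closed $F_n\subseteq B$ with $\tau(P(T,F_n))\to\tau(P(T,B))$, and since $\bigvee_n P(T,F_n)\le P(T,B)$ while $\tau$ is faithful and normal this forces $P(T,F_n)\uparrow P(T,B)$ strongly; combined with $\HEu_T(F_n)\subseteq\HEu_T(B)$ this yields $P(T,B)\HEu\subseteq\overline{\HEu_T(B)}$ once the closed case is known, while the closed case together with $\HEu_T(B)=\bigcup\{\HEu_T(F)\mid F\subseteq B\text{ closed}\}$ and $P(T,F)\le P(T,B)$ for $F\subseteq B$ (Theorem~\ref{thm:hsproj}) gives the reverse inclusion.

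For $P(T,F)\HEu\subseteq\HEu_T(F)$ with $F$ closed, set $p=P(T,F)$ (the case $p=0$ being trivial). By Theorem~\ref{thm:hsproj}(i) the subspace $p\HEu$ is $T$-invariant and $T|_{p\HEu}$ is the operator $pTp$ acting on $p\HEu$; by spectral permanence its spectrum equals that of $pTp$ computed in the corner $p\Mcal p$, which by Proposition~\ref{prop:decom} lies in $\overline F=F$. Hence for every $\xi\in p\HEu$ the map $\lambda\mapsto(\lambda-T|_{p\HEu})^{-1}\xi$ is an analytic $\HEu$-valued solution of $(\lambda-T)f(\lambda)=\xi$ on $\Cpx\setminus F$, so $\sigma_T(\xi)\subseteq F$, i.e.\ $\xi\in\HEu_T(F)$.

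For $\HEu_T(F)\subseteq P(T,F)\HEu$ the decisive case is a closed disc $F=\{z:|z-\mu|\le r\}$. Both $P(T,\cdot)$ and $\HEu_T(\cdot)$ are covariant under $T\mapsto T-\mu$ (immediate from uniqueness in Theorem~\ref{thm:hsproj} and from $\sigma_{T-\mu}(\xi)=\sigma_T(\xi)-\mu$), so we may take $\mu=0$, whereupon $F=A_{0,r}$. For $\xi\in\HEu_T(A_{0,r})$ the (unique, by SVEP) local resolvent of $\xi$ is analytic on $\{|\lambda|>r\}$ and at $\infty$; its Laurent expansion about $\infty$ is $\sum_{n\ge0}\lambda^{-n-1}T^n\xi$, and being that of a function analytic on that region it converges there, so $\limsup_n\norm{T^n\xi}^{1/n}\le r$, and taking the constant sequence $\xi_n=\xi$ in Proposition~\ref{prop:disks} gives $\xi\in P(T,A_{0,r})\HEu$. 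Next, for a finite union of closed discs $C=\overline D_1\cup\cdots\cup\overline D_m$ I would use that a decomposable operator satisfies $\HEu_T(C_1\cup C_2)=\HEu_T(C_1)+\HEu_T(C_2)$ for closed $C_1,C_2$ (property $(\delta)$; see~\cite{LN00}) --- or, avoiding~\cite{LN00}, apply the definition of decomposability directly to the restriction of $T$ to the closed invariant subspace $\HEu_T(F)$, which is again decomposable --- to write any $\xi\in\HEu_T(C)$ as $\sum_i\xi_i$ with $\xi_i\in\HEu_T(\overline D_i)\subseteq P(T,\overline D_i)\HEu$ by the disc case, whence $\xi\in\sum_i P(T,\overline D_i)\HEu\subseteq P(T,C)\HEu$ by Theorem~\ref{thm:hsproj-lattice}. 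Finally, an arbitrary closed $F$ is $\bigcap_k C_k$ for a decreasing sequence $C_k$ of finite unions of closed discs (shrinking finite coverings of $F$ by discs of radius $1/k$), so $\HEu_T(F)\subseteq\bigcap_k\HEu_T(C_k)\subseteq\bigcap_k P(T,C_k)\HEu=P(T,F)\HEu$, again by Theorem~\ref{thm:hsproj-lattice}.

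The step I expect to be the main obstacle is exactly this last passage: Proposition~\ref{prop:disks} applies only to discs, and while the lattice intersection property lets one take decreasing intersections of discs for free, reaching an arbitrary closed set forces one through finite unions of discs, where decomposability --- not merely the single-valued extension property --- is genuinely needed, in the form of the additivity $\HEu_T(C_1\cup C_2)=\HEu_T(C_1)+\HEu_T(C_2)$. A smaller, more bookkeeping-flavored point is to pin down the meaning of $\HEu_T(B)$ for non-closed Borel $B$ and to verify the regularity and strong-convergence facts used to descend from Borel sets to closed sets in both directions.
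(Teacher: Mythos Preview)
The paper does not actually prove this proposition; it is quoted verbatim as Proposition~9.2 of Haagerup--Schultz~\cite{HS09} and used as a black box. So there is no ``paper's own proof'' to compare against.

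That said, your proposed argument is sound and essentially recovers the Haagerup--Schultz proof. The two inclusions for closed $F$ are handled correctly: the inclusion $P(T,F)\HEu\subseteq\HEu_T(F)$ uses exactly the spectral containment in Proposition~\ref{prop:decom}, and the reverse inclusion for discs via the Laurent expansion of the local resolvent and Proposition~\ref{prop:disks} is the right idea. The bootstrap from discs to finite unions of discs via the additivity $\HEu_T(C_1\cup C_2)=\HEu_T(C_1)+\HEu_T(C_2)$ (which does require decomposability, as you note) and then to arbitrary closed sets by intersection and Theorem~\ref{thm:hsproj-lattice} is also correct; you should remark that one may assume $F$ compact since $\sigma_T(\xi)\subseteq\sigma(T)$ and, for decomposable $T$, $\sigma(T)=\operatorname{supp}\mu_T$, so $F$ may be replaced by $F\cap\sigma(T)$ without changing either side. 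The passage from closed to Borel via inner regularity of $\mu_T$ and the lattice property is exactly how the paper itself argues elsewhere (see the proof of Proposition~\ref{prop:spcproj}).

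One caution: you invoke Proposition~\ref{prop:decom} for the inclusion $P(T,F)\HEu\subseteq\HEu_T(F)$. In the literature the proof of that characterization (from~\cite{DNZ18}) itself passes through the Haagerup--Schultz result you are trying to establish, so if you want a self-contained argument you should instead obtain $\sigma_{p\Mcal p}(pTp)\subseteq F$ directly from the definition of decomposability and the maximality property Theorem~\ref{thm:hsproj}(v), which is straightforward.
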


An operator is {\em strongly decomposable} if its restriction to every local spectral subspace is decomposable. For $T \in \Mcal$, this is equivalent to $TP(T,B)$ being decomposable, for every $B \in \Afr$.

\subsection{R-diagonal operators}
The R-diagonal operators were first introduced and studied by Nica and Speicher~\cite{NS97} and are natural objects in free probability theory.
In a finite von Neumann algebra, 
an R-diagonal operator $x$ is one that has the same $*$-distribution as $uh$, where $u$ is a Haar unitary, $h=|x|$ is positive, and the pair $(u,\,h)$ is $*$-free.

Here we collect some results and observations of Haagerup and Larsen~\cite{HL01}:
\begin{propsubs}\label{prop:Rdiag}
Suppose $x\in\Mcal$ is R-diagonal.
\begin{enumerate}[(i)]
\item\label{it:RdInv} if $x$ is invertible, then also $x^{-1}$ is R-diagonal,
\item\label{it:Rd2nm} for every $k\in\Nats$,
$\|x^k\|_2=\|x\|_2^{k}$,
\item\label{it:RdSpecRad} the spectral radius of $x$ equals $\|x\|_2$.
\end{enumerate}
\end{propsubs}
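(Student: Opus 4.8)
These three facts are due to Haagerup and Larsen~\cite{HL01}; here is how I would derive them. Because R-diagonality, and each of the three conclusions, depends only on the $*$-distribution of $x$, I would begin by replacing $x$ with the model $x=uh$, where $h=|x|\ge 0$, $u$ is a Haar unitary, and the pair $(u,h)$ is $*$-free. For~(i): invertibility of $x$ in $\Mcal$ forces $h=u^*x$ to be invertible, and I would write
\[
x^{-1}=h^{-1}u^*=u^*\big(uh^{-1}u^*\big),
\]
where $u^*$ is again a Haar unitary and $uh^{-1}u^*\ge 0$. So $x^{-1}$ has the required form once one knows that the pair $(u^*,\,uh^{-1}u^*)$ is $*$-free, that is, that $W^*(u)$ is free from $uW^*(h)u^*$; this I would deduce from the freeness of $W^*(u)$ and $W^*(h)$ by a short computation with the moment characterization of freeness, using that $u$ commutes with everything in the abelian algebra $W^*(u)$. (Alternatively, one can invoke the characterization of R-diagonality as invariance of the $*$-distribution under multiplication by a $*$-free Haar unitary~\cite{NS97}, together with $ux^{-1}=(xu^*)^{-1}$.)

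For~(ii): note that $\tau(x)=\tau(uh)=\tau(u)\tau(h)=0$, and likewise $\tau(x^*)=0$, so by the cumulant description of R-diagonal elements~\cite{NS97} the only non-vanishing $*$-cumulants of $x$ are the alternating ones $\kappa_{2m}(x,x^*,x,x^*,\dots)$ and $\kappa_{2m}(x^*,x,x^*,x,\dots)$. Using $(x^n)^*=(x^*)^n$, I would then expand
\[
\|x^n\|_2^2=\tau\big((x^*)^nx^n\big)=\tau\big(\underbrace{x^*\cdots x^*}_{n}\,\underbrace{x\cdots x}_{n}\big)
\]
by the moment--cumulant formula as a sum over non-crossing partitions $\pi$ of $\{1,\dots,2n\}$, where position $i$ carries $x^*$ for $i\le n$ and $x$ for $i>n$. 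In any term that contributes, each block of $\pi$ must have arguments alternating between $x^*$ and $x$; given the layout $(x^*)^n(x)^n$ this forces every block to be a pair $\{i,j\}$ with $i\le n<j$ (singletons would contribute $\kappa_1(x)=\tau(x)=0$, and longer alternating blocks are impossible). A short counting argument --- position $1$ must be paired with $2n$ in order to keep equal numbers of $x^*$'s and $x$'s inside the arc without creating a crossing, then $2$ with $2n-1$, and so on --- shows the only non-crossing partition of this type is the nested pairing $\{i,\,2n+1-i\}$, $1\le i\le n$, each of whose $n$ pairs contributes $\kappa_2(x^*,x)=\tau(x^*x)=\|x\|_2^2$. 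Hence $\|x^n\|_2^2=\|x\|_2^{2n}$.

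For~(iii): one inequality is immediate from~(ii). Writing $r(x)$ for the spectral radius of $x$, the spectral radius formula together with $\|\cdot\|_2\le\|\cdot\|$ gives
\[
r(x)=\lim_{n\to\infty}\|x^n\|^{1/n}\ \ge\ \lim_{n\to\infty}\|x^n\|_2^{1/n}=\|x\|_2 .
\]
The reverse inequality $r(x)\le\|x\|_2$ is the substantive point, and I expect it to be the main obstacle: it amounts to showing that the operator norms $\|x^n\|$ grow at most sub-exponentially relative to $\|x\|_2^n$ --- equivalently, to locating the outer radius of $\sigma(x)$ --- and this requires genuine free-probabilistic input, namely Haagerup and Larsen's norm estimates for $(x^n)^*x^n=(hu^*)^{n-1}h^2(uh)^{n-1}$ that exploit the freeness of $u$ and $h$. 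Granting that, $\|x^n\|^{1/n}\to\|x\|_2$, which is~(iii).
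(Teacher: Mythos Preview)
Your arguments are correct. The paper's own ``proof'' is purely by citation: it attributes (i) and (ii) to Proposition~3.10 of~\cite{HL01} and does not reproduce any argument, so your self-contained sketches for these parts go well beyond what the paper does. Your freeness claim in~(i) --- that $W^*(u)$ is free from $uW^*(h)u^*$ --- is indeed a one-line verification once one uses that conjugation by $u$ fixes $W^*(u)$ pointwise (abelianness) and preserves the trace; and your moment--cumulant argument for~(ii) is clean and correct, the key observation being that in the word $(x^*)^n(x)^n$ any block with alternating arguments must be a pair $\{i,j\}$ with $i\le n<j$, after which non-crossingness forces the rainbow pairing. For~(iii) the paper takes a slightly different route: rather than splitting into two inequalities, it invokes the invariance characterization of R-diagonality (that $x$ has the same $*$-distribution as $vx$ for a $*$-free Haar unitary $v$) and then appeals directly to Proposition~4.1 of~\cite{HL01}, which identifies the spectrum of such a product. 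Your approach has the advantage of isolating the easy inequality $r(x)\ge\|x\|_2$ as an immediate consequence of~(ii), making transparent that the genuine content lies in the upper bound; the paper's route packages both directions into a single citation.
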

\begin{proof}
Assertions~\eqref{it:RdInv} and~\eqref{it:Rd2nm} are from Proposition 3.10 of~\cite{HL01}, while the assertion~\eqref{it:RdSpecRad}
follows from Proposition 4.1 of~\cite{HL01} and the fact that $x$ R-diagonal implies that $x$ has the same $*$-distribution as $vx$ when $v$ is a Haar unitary
that is $*$-free from $x$.
\end{proof}

\subsection{DT-operators}
In~\cite{DH04}, the first author and Uffe Haagerup introduced the class of DT-operators and proved that they are all strongly decomposable.
For each compactly supported Borel probability measure $\mu$ on $\Cpx$ and each $c>0$, there is a $\DT(\mu,c)$ operator $Z$,
(or, more correctly, there is a $\DT(\mu,c)$ $*$-distribution, and every element of a $W^*$-noncommutative probability space having this $*$-distribution is called
a $\DT(\mu,c)$ operator).
This operator can be realized as $Z=D+cT$, where $D$ is a normal operator and $T$ is the ``upper triangular half'' of a semicircular operator that is free from an abelian
algebra containing $D$.
See~\cite{DH04} for details.

For convenience, we collect some results (or easy observations) from~\cite{DH04}:
\begin{propsubs}\label{prop:DT}
Suppose $Z$ is a $DT(\mu,c)$ operator.
\begin{enumerate}[(i)]
\item\label{it:DTmult} if $w\in\Cpx\setminus\{0\}$, then $wZ$ is a $\DT(\mu\circ m_{w^{-1}},|w|c)$ operator, where $m_{w^{-1}}$ is the set map
of multiplication by $w^{-1}$,
\item\label{it:DTspec} the spectrum of $Z$ equals the support of $\mu$,
\item\label{it:DTBrown} the Brown measure of $Z$ is $\mu$.
\end{enumerate}
\end{propsubs}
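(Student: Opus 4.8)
The plan is to handle the three assertions separately, since they are of quite different character. Assertion~\eqref{it:DTBrown}, that $\mu_Z=\mu$, is one of the principal theorems of~\cite{DH04}, and I would simply quote it. Granting~\eqref{it:DTBrown}, assertion~\eqref{it:DTspec} then follows at once within the present framework: every DT-operator is decomposable (indeed strongly decomposable, as recalled above from~\cite{DH04}), so that, by the corollary to Proposition~\ref{prop:decom}, the spectrum of $Z$ coincides with the support of its Brown measure, whence $\sigma(Z)=\operatorname{supp}\mu_Z=\operatorname{supp}\mu$. (Alternatively one may quote the spectrum computation directly from~\cite{DH04}.) Thus the only assertion needing a genuine argument is the scaling statement~\eqref{it:DTmult}.

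For~\eqref{it:DTmult} I would start from the realization $Z=D+cT$ recalled above, with $\Dc$ the relevant abelian subalgebra, $D\in\Dc$ normal with distribution $\mu$, and $T$ the ``upper triangular half'' of a semicircular element $s$ that is free from $\Dc$. Fixing $w=|w|e^{i\theta}\in\Cpx\setminus\{0\}$, I would write $wZ=(wD)+(|w|c)(e^{i\theta}T)$. Here $wD$ is again a normal element of $\Dc$, now with distribution the push-forward of $\mu$ under multiplication by $w$, that is, $\mu\circ m_{w^{-1}}$; so the real content is to check that $e^{i\theta}T$ is again the ``upper triangular half'' of a semicircular element free from $\Dc$, standing in the same position relative to $\Dc$ as $T$ does, so that $wZ$ has the form required of a $\DT(\mu\circ m_{w^{-1}},|w|c)$ operator. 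The quickest route, I expect, is via the strictly upper triangular random-matrix model of~\cite{DH04}: $Z$ is the $*$-distribution limit of $D_N+cT_N$, where $D_N$ is deterministic diagonal with eigenvalue distribution tending to $\mu$ and $T_N$ is strictly upper triangular with i.i.d.\ complex Gaussian entries of variance $1/N$. Rotational invariance of the complex Gaussian shows that $e^{i\theta}T_N$ has the same law as $T_N$, so that $wZ_N=(wD_N)+(|w|c)(e^{i\theta}T_N)$ realizes $wZ$, in the limit, as a $\DT(\mu\circ m_{w^{-1}},|w|c)$ operator, while at the same time $wZ_N\to wZ$ in $*$-distribution. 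Working purely operator-algebraically, the same conclusion follows from the $\Cpx$-linearity of the upper triangularization map together with the fact that $e^{i\theta}s$ carries, jointly with $\Dc$, the same off-diagonal data as $s$.

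The one place where I expect real care to be required is exactly this last step of~\eqref{it:DTmult}: checking that replacing $T$ by $e^{i\theta}T$ leaves its joint $*$-distribution with $\Dc$ unchanged. This is transparent in the random-matrix picture and routine in the abstract one, and I would include whichever is shorter to make precise given the conventions of~\cite{DH04}. Everything else in Proposition~\ref{prop:DT} is either a direct citation to~\cite{DH04} or an immediate consequence of results already recorded in Section~\ref{sec:prelims}.
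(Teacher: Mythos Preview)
Your proposal is correct, but there is nothing to compare it against: the paper gives no proof of Proposition~\ref{prop:DT} at all. The proposition is introduced as a list of ``results (or easy observations) from~\cite{DH04}'' and left at that. Your treatment is therefore strictly more detailed than the paper's.

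A few remarks on the specifics. Your handling of~\eqref{it:DTBrown} by direct citation to~\cite{DH04} is exactly what the authors intend. For~\eqref{it:DTspec}, your derivation from~\eqref{it:DTBrown} via strong decomposability and the corollary to Proposition~\ref{prop:decom} is a valid internal shortcut; the paper instead simply defers to~\cite{DH04}, where the spectrum is computed directly. Either route is fine, though yours has the minor circularity that the decomposability result is itself one of the main theorems of~\cite{DH04}. For~\eqref{it:DTmult}, your random-matrix argument (rotational invariance of the complex Gaussian entries gives $e^{i\theta}T_N\overset{d}{=}T_N$, hence $e^{i\theta}T$ has the same joint $*$-distribution with $\Dc$ as $T$) is correct and is almost certainly the ``easy observation'' the authors have in mind. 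Your abstract alternative is a bit loosely phrased---$e^{i\theta}s$ is not itself semicircular---but the conclusion you need, namely that the joint $*$-distribution of $(e^{i\theta}T,\Dc)$ agrees with that of $(T,\Dc)$, is correct and most cleanly obtained from the matrix model as you suggest.
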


\subsection{Circular free Poisson operators}\label{subsec:cfP}
In Definition~1.1 of~\cite{DH01}, a circular free Poisson operator of parameter $c\ge1$ is defined to be
an R-diagonal operator $x$ as above such that $|x|^2$ has moments equal to those of a free Poisson distribution $\nu_c$ with paramenter $c$.
Namely, this distribution is absolutely continuous with respect to Lebesgue measure and has density 
\[
\frac{d\nu_c}{d\lambda}(t)=\frac{\sqrt{(b-t)(t-a)}}{2\pi t}1_{[a,b]}(t),
\]
where $a=(\sqrt{c}-1)^2$ and $b=(\sqrt{c}+1)^2$.

Theorem~7.3 of~\cite{DH04} shows that the DT-operators that are also R-diagonal are precisely scalar multiples of the circular free Poisson operators, 
and that a circular free Poisson operator of parameter $c$ is
a $\DT(\mu,1)$ operator,
where $\mu$ is the uniform probability measure on the annulus $A_{\sqrt{c-1},\sqrt{c}}$ centered at the origin and with radii $\sqrt{c-1}$ and $\sqrt{c}$.

\begin{propsubs}\label{prop:cfPnorms}
Let $Z$ be a circular free Poisson operator of parameter $c$.
Then
\[
\|Z\|_2=\sqrt{c}.
\]
If $c>1$, then 
\[
\|Z^{-1}\|_2=\frac1{\sqrt{c-1}}.
\]
\end{propsubs}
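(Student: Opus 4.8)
The plan is to deduce both norm identities from the fact that $Z$ is R-diagonal, using the spectral-radius formula of Proposition~\ref{prop:Rdiag}\eqref{it:RdSpecRad} together with the description of $\sigma(Z)$ coming from the DT-picture. First I would recall from Section~\ref{subsec:cfP} that a circular free Poisson operator $Z$ of parameter $c$ is R-diagonal and is a $\DT(\mu,1)$ operator, where $\mu$ is the uniform probability measure on the closed annulus $A_{\sqrt{c-1},\sqrt c}$. By Proposition~\ref{prop:DT}\eqref{it:DTspec}, $\sigma(Z)=\operatorname{supp}\mu=A_{\sqrt{c-1},\sqrt c}$, so the spectral radius of $Z$ equals $\sqrt c$ (this also covers the case $c=1$, where the annulus degenerates to the disk of radius $1$ but the outer radius is still $\sqrt c$). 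Since $Z$ is R-diagonal, Proposition~\ref{prop:Rdiag}\eqref{it:RdSpecRad} then gives $\|Z\|_2=\sqrt c$.

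For the second identity I would assume $c>1$. Then $0\notin\sigma(Z)$, so $Z$ is invertible in $\Mcal$, and by Proposition~\ref{prop:Rdiag}\eqref{it:RdInv} the operator $Z^{-1}$ is again R-diagonal. Applying Proposition~\ref{prop:Rdiag}\eqref{it:RdSpecRad} to $Z^{-1}$, its $\|\cdot\|_2$-norm equals its spectral radius; since $\sigma(Z^{-1})=\{\lambda^{-1}:\lambda\in\sigma(Z)\}$ and $\operatorname{dist}(0,\sigma(Z))=\sqrt{c-1}$, that spectral radius is $1/\sqrt{c-1}$, whence $\|Z^{-1}\|_2=1/\sqrt{c-1}$.

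As a cross-check, and as an alternative route that avoids invoking the DT identification, one can compute the two norms directly from the $*$-distribution. Writing $Z$ in the R-diagonal form $uh$ with $u$ a Haar unitary, $h=|Z|$, the pair $(u,h)$ being $*$-free and $h^2$ having the free Poisson law $\nu_c$, we get $\|Z\|_2^2=\tau(h^2)=\int t\,d\nu_c(t)$ and, when $c>1$, $\|Z^{-1}\|_2^2=\tau\big((ZZ^*)^{-1}\big)=\tau(h^{-2})=\int t^{-1}\,d\nu_c(t)$, using that $ZZ^*=uh^2u^*$ has the same moments as $h^2$. With $a=(\sqrt c-1)^2$ and $b=(\sqrt c+1)^2$ one has $b-a=4\sqrt c$, $a+b=2(c+1)$ and $ab=(c-1)^2$; feeding the classical integrals $\int_a^b\sqrt{(b-t)(t-a)}\,dt=\tfrac{\pi}{8}(b-a)^2$ and $\int_a^b t^{-2}\sqrt{(b-t)(t-a)}\,dt=\pi\big(\tfrac{a+b}{2\sqrt{ab}}-1\big)$ into the density $\tfrac1{2\pi t}\sqrt{(b-t)(t-a)}$ yields $\int t\,d\nu_c=c$ and $\int t^{-1}\,d\nu_c=\tfrac1{c-1}$, matching the claimed values.

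There is no serious obstacle here; the content is mainly bookkeeping of which earlier facts to invoke. The only points needing a little care are the degenerate case $c=1$ in the first formula (harmless, as noted) and, in the alternative computation, the justification that passing from $Z$ to $ZZ^*$ turns the traces into the first and $(-1)$st moments of $\nu_c$, together with the standard evaluation of those moments of the free Poisson law.
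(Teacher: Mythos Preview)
Your main argument is exactly the paper's proof: identify $\sigma(Z)$ with the annulus $A_{\sqrt{c-1},\sqrt c}$ via Proposition~\ref{prop:DT}\eqref{it:DTspec}, then apply the R-diagonal spectral-radius formula of Proposition~\ref{prop:Rdiag}\eqref{it:RdSpecRad} to $Z$ and (using Proposition~\ref{prop:Rdiag}\eqref{it:RdInv}) to $Z^{-1}$. The additional moment computation of $\nu_c$ is a correct independent cross-check that the paper does not include.
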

\begin{proof}
By Proposition~\ref{prop:Rdiag}\eqref{it:RdSpecRad}, $\|Z\|_2$ equals the spectral radius of $Z$.
By Proposition~\ref{prop:DT}, $Z$ has spectrum equal to the annulus $A_{\sqrt{c-1},\sqrt{c}}$.
Similarly, if $c>1$, then $Z$ is invertible and using Proposition~\ref{prop:Rdiag}\eqref{it:RdInv},
$\|Z^{-1}\|_2$ is the spectral radius of $Z^{-1}$.
But $Z^{-1}$ has spectrum equal to the annulus $A_{c^{-1/2},(c-1)^{-1/2}}$.
\end{proof}

\section{Spectral operators in finite von Neumann algebras}
\label{sec:spectral}

\begin{defi}\label{def:idemspecmeas}
A {\em bounded idempotent-valued spectral measure} in $\Mcal$ is a mapping $\sigma\mapsto E(\sigma)$ that assigns to every $\sigma\in\Afr$ 
an idempotent $E(\sigma)\in\Mcal$ so that
\begin{enumerate}[(i)]
\item\label{it:ivsmi} $E(\Cpx)=1$,
\item\label{it:ivsmii} for all $\sigma_1,\sigma_2\in\Afr$, $E(\sigma_1\cap\sigma_2)=E(\sigma_1)E(\sigma_2)$,
\item\label{it:ivsmiii} for all $\sigma_1,\sigma_2,\ldots\in\Afr$ such that $\sigma_i\cap\sigma_j=\emptyset$ whenever $i\ne j$,
\[
E(\bigcup_{i=1}^\infty\sigma_i)=\sum_{i=1}^\infty E(\sigma_i),
\]
where the sum converges with respect to $\|\cdot\|_2$,
\item\label{it:ivsmiv} $\sup_{\sigma\in\Afr}\|E(\sigma)\|<\infty$.
\end{enumerate}

Unless otherwise mentioned, in the rest of this article, an {\em idempotent-valued spectral measure} will refer to a {\em bounded idempotent-valued spectral measure}
\end{defi}

Of course, a bounded idempotent-valued spectral measure $E$ where each $E(\sigma)$ is self-adjoint is just called a {\em spectral measure}.

It is known that a bounded idempotent-valued spectral measure in $B(\HEu)$, is similar to a spectral measure in $B(\HEu)$.
This may be found in~\cite{M59} ({\em cf} \cite{W54}), but we have not been able to obtain a copy of~\cite{M59}.
For completeness, we provide a proof of this result, when $B(\HEu)$ is replaced with $\Mcal$.
\begin{prop}\label{prop:smsim}
Suppose $E$ is a bounded idempotent-valued spectral measure in $\Mcal$.
Then there is an invertible $A\in\Mcal$ so that for every $\sigma\in\Afr$, the idempotent $A^{-1}E(\sigma)A$ is self-adjoint.
\end{prop}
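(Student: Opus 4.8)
The plan is to mimic the classical averaging trick used to show that a bounded representation of an amenable group (or a bounded idempotent) is similar to a unitary/self-adjoint one, but carried out measure-theoretically and inside $\Mcal$. First I would observe that, because the idempotents $E(\sigma)$ are uniformly bounded and pairwise "orthogonal" in the sense of property~\eqref{it:ivsmii}, the family $\{E(\sigma)\}_{\sigma\in\Afr}$ is contained in a bounded subset of $\Mcal$, and the spectral measure $\sigma\mapsto E(\sigma)$ together with the trace $\tau$ produces, for each $x\in\Mcal$, a genuine ($\Cpx$-valued, countably additive, bounded) measure $\sigma\mapsto\tau(E(\sigma)^*xE(\sigma))$ via property~\eqref{it:ivsmiii} and normality of $\tau$. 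This is the object I want to integrate.

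The key step is to build the symmetrizing operator $A$. The natural candidate is a weak-$*$ integral
\[
S=\int_{\Cpx} E(d\lambda)^*\,E(d\lambda),
\]
meaning: define $S\in\Mcal$ by specifying $\tau(Sx)$ for all $x\in\Mcal$ (equivalently by specifying the bounded sesquilinear form $\langle S\xh,\yh\rangle$ on $L^2(\Mcal,\tau)$) as a limit over refining finite Borel partitions $\{\sigma_1,\dots,\sigma_n\}$ of $\Cpx$ of the sums $\sum_i E(\sigma_i)^*xE(\sigma_i)$. I would first prove that these partition-sums are bounded in $\Mcal$ uniformly (using only $\sup_\sigma\|E(\sigma)\|<\infty$ and orthogonality of the $E(\sigma_i)$, which forces $\sum_i E(\sigma_i)^*\xi_i$ to have controlled norm once the $\xi_i$ lie in the appropriate ranges), and that they form a Cauchy net in the $\sigma$-weak topology, so the limit $S$ exists in $\Mcal$. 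By construction $S\ge 0$; moreover $S$ is bounded below: testing against vectors in $\ran E(\sigma)$ and using $E(\sigma)^*E(\sigma)\restrict_{\ran E(\sigma)}\ge$ (something positive) shows $S\ge c>0$, so $S$ is invertible in $\Mcal$. Set $A=S^{1/2}$.

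The verification that $A$ conjugates each $E(\sigma)$ to a self-adjoint idempotent is then the routine computation: the defining property of $S$ is the intertwining relation
\[
E(\sigma)^*\,S\,E(\sigma)=S\,E(\sigma)=E(\sigma)^*\,S\qquad(\sigma\in\Afr),
\]
which follows by splitting every partition into its part inside $\sigma$ and its part inside $\sigma^c$ and using $E(\sigma)E(\tau)=E(\sigma\cap\tau)$. From $SE(\sigma)=E(\sigma)^*S$ one gets that $A^{-1}E(\sigma)A=A^{-1}E(\sigma)^*A$ is self-adjoint (it is still idempotent because conjugation by $A$ is an algebra automorphism), which is exactly the claim. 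I expect the main obstacle to be the first half of the key step: making sense of the integral $S=\int E(d\lambda)^*E(d\lambda)$ rigorously — establishing the uniform boundedness of the partition sums in operator norm and their $\sigma$-weak convergence — since this is where properties~\eqref{it:ivsmii}, \eqref{it:ivsmiii}, and \eqref{it:ivsmiv} must be combined carefully and where the finiteness of the von Neumann algebra (boundedness of $\Mcal$, existence of $\tau$ to pass to $L^2$) is genuinely used; the intertwining identity and the passage from it to the conclusion are then formal.
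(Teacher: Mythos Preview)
Your approach is essentially the paper's: both build $S=A^2$ as a (sub)limit of the partition sums $\sum_i E(\sigma_i)^*E(\sigma_i)$, derive the intertwining relation $SE(\sigma)=E(\sigma)^*S$ by refining any partition below $\{\sigma,\sigma^c\}$, and take $A=S^{1/2}$. Your formulation (operators in $\Mcal$, $\sigma$-weak topology) even spares you the paper's separate commutant argument that $A\in\Mcal$, since a $\sigma$-weak cluster point of elements of $\Mcal$ is automatically in $\Mcal$.

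Two points to tighten. First, do not claim the net $(S_\pi)_\pi$ is $\sigma$-weakly Cauchy; there is no obvious monotonicity under refinement, and the paper does not assert convergence either --- it extracts an \emph{accumulation point} via compactness (Tychonoff on sesquilinear forms, equivalently $\sigma$-weak compactness of the ball of $\Mcal$). Any accumulation point carries the intertwining relation, since the partitions refining $\{\sigma,\sigma^c\}$ are cofinal, so this is all you need. Second, your sketch for the lower bound $S\ge c>0$ (``testing against vectors in $\ran E(\sigma)$'') is not enough: what is actually required is the two-sided estimate $\frac{1}{2M}\|x\|\le\big(\sum_i\|E(\sigma_i)x\|^2\big)^{1/2}\le 2M\|x\|$, uniformly in $\pi$, with $M=\sup_\sigma\|E(\sigma)\|$. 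This is Lemma~1 of Wermer~\cite{W54}; the lower bound is the nontrivial half and is what gives invertibility of $S$ after passing to the cluster point. Finally, a small slip: from $A^2E(\sigma)=E(\sigma)^*A^2$ you get that $AE(\sigma)A^{-1}$ (not $A^{-1}E(\sigma)A$) is self-adjoint, but of course replacing $A$ by $A^{-1}$ gives the statement as written.
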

\begin{proof}
Fix a normal faithful representation $\Mcal\hookrightarrow B(\HEu)$.
Given a finite Borel partition $\pi=\{\sigma_1,\ldots,\sigma_n\}$ of $\Cpx$, we consider the sesquilinear form on $\HEu$ given by
\[
\langle x,y\rangle_\pi=\sum_{i=1}^n\langle E(\sigma_i)x,E(\sigma_i)y\rangle
\]
and denote the corresponding norm by 
\[
\|x\|_\pi=\left(\sum_{i=1}^n\|E(\sigma_i)x\|^2\right)^{1/2}.
\]
From Lemma~1 of~\cite{W54}, we have
\begin{equation}\label{eq:nmpi}
\frac1{2M}\|x\|\le\|x\|_\pi\le2M\|x\|,
\end{equation}
for every $x\in\HEu$, where $M=\sup_{\sigma\in\Afr}\|E(\sigma)\|$.

Let $\Omega$ be the directed set of all finite Borel partitions of $\Cpx$, partially ordered by refinement.
Consider the net
\begin{equation}\label{eq:sesqnet}
\Omega\ni\pi\mapsto\langle\cdot,\cdot\rangle_\pi.
\end{equation}
We identify each sesquilinear form $\langle\cdot,\cdot\rangle_\pi$ with its restriction to the Cartesian product
$\Sc_1\times\Sc_1$
of the unit sphere of $\HEu$ with itself.
Using the upper bound from~\eqref{eq:nmpi}, we have $|\langle x,y\rangle_\pi|\le 2M$ for every $(x,y)\in\Sc_1\times\Sc_1$.
Thus, each sesquilinear form $\langle\cdot,\cdot\rangle_\pi$ is identified with an element of the product space $X=\prod_{\Sc_1\times\Sc_1}2M\mathbb{D}$ of
copies of the
closed disk of radius $2M$, which is compact, by Tychonoff's Theorem.
Thus, the net~\eqref{eq:sesqnet} has an accumulation point in $X$, and this extends to a bounded sesquilinear form $\langle\cdot,\cdot\rangle_\alpha$ on $\HEu$.

Writing $\|x\|_\alpha=\langle x,x,\rangle_\alpha^{1/2}$, from~\eqref{eq:nmpi} we have
\begin{equation}\label{eq:nmalpha}
\frac1{2M}\|x\|\le\|x\|_\alpha\le2M\|x\|.
\end{equation}
Let $x,y\in\HEu$.
If $\pi=\{\sigma_1,\ldots,\sigma_n\}\in\Omega$ and $\sigma=\bigcup_{i\in I}\sigma_i$ for some $I\subseteq\{1,\ldots,n\}$, then
$\langle E(\sigma)x,y\rangle_\pi=\langle x,E(\sigma)y\rangle_\pi$.
This implies that, for every $\sigma\in\Afr$, 
\begin{equation}\label{eq:Ealpha}
\langle E(\sigma)x,y\rangle_\alpha=\langle x,E(\sigma)y\rangle_\alpha.
\end{equation}
Since $\langle\cdot,\cdot\rangle_\alpha$ is a bounded sesquilinear form, there is $A\in B(\HEu)$, $A\ge0$, so that for all $x,y\in\HEu$, we have
\[
\langle x,y\rangle_\alpha=\langle Ax, Ay\rangle.
\]
Using~\eqref{eq:nmalpha}, we see that $A$ is invertible.
From~\eqref{eq:Ealpha}, we get
$A^2E(\sigma)=E(\sigma)^*A^2$, from which we get
\[
(AE(\sigma)A^{-1})^*=AE(\sigma)A^{-1}.
\]

It remains to show $A\in\Mcal$.
Suppose $U$ is a unitary in the commutant of $\Mcal$.
We see immediately that for every $\pi\in\Omega$ and for all $x,y\in\HEu$ we have
$\langle Ux,Uy\rangle_\pi=\langle x,y\rangle_\pi$, so we must have
\[
\langle AUx,AUy\rangle=\langle Ux,Uy\rangle_\alpha=\langle x,y\rangle_\alpha=\langle Ax,Ay\rangle.
\]
Thus, $U$ commutes with $A^2$, so $A^2\in\Mcal$ and $A\in\Mcal$.
\end{proof}

\begin{defi}

Following Dunford~\cite{D54}, an operator $T\in B(\HEu)$ is called a {\em spectral operator} if there exists an idempotent-valued spectral measure $E$ such that 
\begin{enumerate}[(i)]
\setcounter{enumi}{4}
\item $E(B)T=TE(B)$, for every $B \in \Afr$. (in particular, $E(B)\HEu$ is an invariant subspace for $T$)
\item The spectrum of $T$ restricted to the range of $E(B)$ is contained in the closure of $B$:
\[
\sigma(T,E(B)\HEu) \subseteq \overline{B}.
\]
\end{enumerate}
\end{defi}

From Theorems 5 and 6 in~\cite{D54}, if $T$ is a spectral operator,
its idempotent-valued spectral measure is uniquely defined, and $E(B)$ belongs to the bicommutant of $\{T\}$, for every $B \in \Afr$.
 
\begin{defi}
An operator $S\in B(\HEu)$ is said to be of {\em scalar type} if $S$ is spectral and also satisfies the equation
\begin{equation}\label{eq:Sint}
S = \int_{\sigma(S)} \lambda E(d\lambda),
\end{equation}
where $E$ is its associated spectral measure, and the integral is in the uniform operator norm topology. 
\end{defi}

Scalar type operators can be characterised precisely as those operators which are similar to normal operators. 
\begin{thm}\label{thm:norm-scl}
Let $\Mcal$ be a von Neumann algebra.
Then $S\in \Mcal$ is a scalar type operator if and only if there exists an invertible element $A$ in $\Mcal$, such that $A^{-1}SA$ is a normal operator.
\end{thm}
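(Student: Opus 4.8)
The plan is to prove the two implications separately, using Proposition~\ref{prop:smsim} as the main engine for the forward direction and the spectral theorem for normal operators for the reverse.

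For the reverse direction, suppose $A^{-1}SA=N$ is normal for some invertible $A\in\Mcal$. Let $E_N$ be the spectral measure of $N$; since $N\in\Mcal$, each $E_N(B)\in\Mcal$. I would set $E(B)=AE_N(B)A^{-1}$ and check directly that $E$ is a bounded idempotent-valued spectral measure in $\Mcal$ in the sense of Definition~\ref{def:idemspecmeas}: properties (i), (ii) are inherited from $E_N$ by conjugation, (iv) holds with bound $\|A\|\,\|A^{-1}\|$, and the $\|\cdot\|_2$-countable additivity (iii) transfers from that of $E_N$. It commutes with $S=ANA^{-1}$, and since the range of $E(B)$ is $A$ applied to the range of $E_N(B)$, the restriction of $S$ there is similar to $N\restrict E_N(B)\HEu$, which has spectrum in $\overline B$; hence $S$ is spectral. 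Finally, from $N=\int_{\sigma(N)}\lambda\,E_N(d\lambda)$ in operator norm, conjugation by $A$ gives $S=\int_{\sigma(S)}\lambda\,E(d\lambda)$ in operator norm, so $S$ is of scalar type.

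For the forward direction, suppose $S\in\Mcal$ is of scalar type with idempotent-valued spectral measure $E$. By the cited results of Dunford, $E$ is unique and $E(B)$ lies in $\{S\}''$ for every $B\in\Afr$; since $S\in\Mcal$ we have $\Mcal'\subseteq\{S\}'$, hence $\{S\}''\subseteq\Mcal$, so $E$ is a bounded idempotent-valued spectral measure in $\Mcal$. Apply Proposition~\ref{prop:smsim} to obtain an invertible $A\in\Mcal$ so that $E'(\sigma):=A^{-1}E(\sigma)A$ is self-adjoint for every $\sigma\in\Afr$; being also multiplicative and countably additive, $E'$ is a genuine (projection-valued) spectral measure in $\Mcal$. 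Conjugating the scalar-type equation by $A$ yields $A^{-1}SA=\int_{\sigma(S)}\lambda\,E'(d\lambda)$, still in operator norm. The right-hand side is the integral of a bounded Borel function against a projection-valued measure, hence a normal operator; indeed the finite sums $\sum_i\lambda_i E'(\sigma_i)$ over partitions of $\sigma(S)$ are normal and norm limits of normal operators are normal. Thus $A^{-1}SA$ is normal.

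The main point requiring care is the bookkeeping around topologies: Dunford's spectral measures are countably additive in the strong operator topology, whereas Definition~\ref{def:idemspecmeas} demands countable additivity in $\|\cdot\|_2$, and one must check these are compatible for the uniformly bounded partial sums at hand (this uses normality of $\tau$, together with the fact that on $L^2(\Mcal,\tau)$ strong convergence of a uniformly bounded net implies $\|\cdot\|_2$-convergence). The remaining verifications — the spectral-measure axioms for $E$ in the reverse direction and the spectrum-localization condition — are routine, as is the passage from the norm-convergent integral of $\lambda$ against a projection-valued measure to normality.
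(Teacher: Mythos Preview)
Your proof is correct and follows essentially the same route as the paper's: use Proposition~\ref{prop:smsim} to straighten the idempotent-valued spectral measure and then conjugate the scalar-type integral in the forward direction, and conjugate the spectral measure of $N$ in the reverse direction. You supply more detail than the paper does --- the bicommutant argument for $E(B)\in\Mcal$, the compatibility of SOT and $\|\cdot\|_2$ countable additivity, and the normality of the integral against a projection-valued measure --- all of which the paper leaves implicit.
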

\begin{proof}
Note that if $S \in \Mcal$ is of scalar type, then its idempotent-valued spectral measure actually lies in $\Mcal$.
Using the invertible element $A$ constructed in Proposition \ref{prop:smsim}, since $ A^{-1}E(\cdot)A$ defines a spectral measure, the integral
\begin{equation}\label{eq:normscl}
N = \int_{\sigma(S)} \lambda A^{-1} E(d\lambda) A = A^{-1} S A  
\end{equation}
defines a normal operator.

Conversely, if $A^{-1}SA=N$ is normal, then the map 
\[ 
E: B \mapsto AP(N,B)A^{-1}  
\]
defines an idempotent-valued spectral measure.
Clearly, $E$ behaves well with respect to the spectrum for $S$, so $S$ is a spectral operator.
Moreover, equation~\eqref{eq:normscl} still holds, so~\eqref{eq:Sint} holds and $S$ is of scalar type. 
\end{proof}

Spectral operators can be characterised by the following decomposition property (see~\cite{D54}).
\begin{prop}\label{prop:spcdec}
If $S \in \Mcal$ is a scalar type operator and $Q\in\Mcal$ is quasinilpotent with $SQ=QS$, then $T=S+Q$ is a spectral operator. 

Conversely, if $T \in \Mcal$ is a spectral operator, then $T$ can be written as $T=S+Q$, where $S,Q\in \Mcal$, $Q$ is quasinilpotent, $S$ is scalar type and $SQ=QS$.
Morever, we have
\[ 
S = \int_{\sigma(T)} \lambda E(d\lambda),
\] 
where $E$ is the idempotent-valued spectral measure associated to $T$.
\end{prop}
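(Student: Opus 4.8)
The plan is to carry out Dunford's classical decomposition from \cite{D54}, the only new ingredient being the observation — already used in the proof of Theorem~\ref{thm:norm-scl} — that the relevant idempotents lie in $\Mcal$.

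For the first assertion, let $E$ be the idempotent-valued spectral measure of the scalar type operator $S$, so each $E(B)\in\Mcal$, and recall from \cite{D54} that $E(B)\in\{S\}''$. Since $QS=SQ$, we get $QE(B)=E(B)Q$ for all $B$, hence each $E(B)$ commutes with $T=S+Q$, which is condition~(v) for $T$. For~(vi), fix $B$ with $E(B)\ne0$. As $Q$ commutes with the idempotent $E(B)$, it leaves both $E(B)\HEu$ and $(1-E(B))\HEu$ invariant, so $\sigma(Q)=\{0\}$ forces $Q\restrict E(B)\HEu$ to be quasinilpotent. On $E(B)\HEu$ the commuting operators $S\restrict$ and $Q\restrict$ satisfy $\sigma(S\restrict+Q\restrict)=\sigma(S\restrict)$: for $\lambda\notin\sigma(S\restrict)$ one factors $S\restrict+Q\restrict-\lambda=(S\restrict-\lambda)\bigl(1+(S\restrict-\lambda)^{-1}Q\restrict\bigr)$, and the second factor is invertible because $(S\restrict-\lambda)^{-1}Q\restrict$, a product of commuting operators one of which is quasinilpotent, is quasinilpotent. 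Hence $\sigma(T\restrict E(B)\HEu)=\sigma(S\restrict E(B)\HEu)\subseteq\overline{B}$, so $E$ is an idempotent-valued spectral measure for $T$ and $T$ is spectral.

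For the converse, let $T$ be spectral with idempotent-valued spectral measure $E$; by the bicommutant property $E(B)\in\{T\}''\subseteq\Mcal''=\Mcal$ for all $B$. Proposition~\ref{prop:smsim} then supplies an invertible $A\in\Mcal$ with $F(B):=A^{-1}E(B)A$ self-adjoint, i.e.\ $F$ is a genuine spectral measure in $\Mcal$; the spectral integral $N:=\int_{\sigma(T)}\lambda\,F(d\lambda)\in\Mcal$ is normal and its Riemann--Stieltjes sums converge to it in norm. Set $S:=ANA^{-1}=\int_{\sigma(T)}\lambda\,E(d\lambda)\in\Mcal$; by Theorem~\ref{thm:norm-scl} it is of scalar type, with idempotent-valued spectral measure $E$, and its Riemann--Stieltjes sums $S_\pi=\sum_i\lambda_iE(B_i)$ (over finite Borel partitions $\pi=\{B_i\}$ of $\sigma(T)$ with $\lambda_i\in B_i$) converge to $S$ in norm. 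Put $Q=T-S\in\Mcal$. Both $T$ and $S$ commute with every $E(B)$ — for $T$ by hypothesis~(v), for $S$ since it is a norm limit of linear combinations of the mutually commuting $E(B)$'s — so $TS=ST$ and $SQ=ST-S^2=TS-S^2=QS$; the formula $S=\int_{\sigma(T)}\lambda\,E(d\lambda)$ holds by construction.

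The one genuinely delicate point is that $Q$ is quasinilpotent, since the spectral radius is not norm continuous and one cannot merely pass to the limit $S_\pi\to S$. The resolution is that $T$, $S$, all $S_\pi$ and all $E(B)$ lie in a single commutative Banach subalgebra $\mathcal B\subseteq\Mcal$ — the norm closure of the algebra generated by $T$ and $\{E(B):B\in\Afr\}$, commutative because $T$ commutes with each $E(B)$ and $E(B)E(C)=E(B\cap C)=E(C)E(B)$ — on which the spectral radius $r(\cdot)$ is subadditive. Given $\eps>0$, choose $\pi$ of mesh $<\eps$ with $\|S-S_\pi\|<\eps$. Writing $T-S_\pi=\sum_i(T-\lambda_i)E(B_i)$ and using that, with respect to the direct sum $\HEu=\bigoplus_iE(B_i)\HEu$ with bounded coordinate idempotents $E(B_i)$, this operator is block diagonal with $i$-th block having spectrum $\sigma(T\restrict E(B_i)\HEu)-\lambda_i\subseteq\overline{B_i}-\lambda_i\subseteq\eps\mathbb{D}$, we get $r(T-S_\pi)\le\eps$. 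Therefore
\[
r(Q)=r\bigl((T-S_\pi)+(S_\pi-S)\bigr)\le r(T-S_\pi)+r(S_\pi-S)\le\eps+\|S_\pi-S\|<2\eps,
\]
and since $\eps>0$ was arbitrary, $r(Q)=0$. Thus $Q$ is quasinilpotent, completing the proof. The main obstacle is precisely this last step; everything else is routine bookkeeping, the key point being that the spectral idempotents of a spectral operator in $\Mcal$ remain in $\Mcal$.
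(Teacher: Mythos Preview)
The paper does not prove this proposition itself; it simply attributes the result to Dunford and cites~\cite{D54}. Your argument is correct and is essentially Dunford's classical proof, augmented by the observation (via $E(B)\in\{T\}''\subseteq\Mcal''=\Mcal$) that $S$ and $Q$ remain in $\Mcal$. One cosmetic point: you assert $\sigma(S\restrict+Q\restrict)=\sigma(S\restrict)$ but only argue the inclusion $\subseteq$; that inclusion is all that is needed for~(vi), and the reverse follows by the symmetric argument with $-Q$ in place of $Q$.
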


The Haagerup--Schultz projections of spectral operators are determined by their idempotent-valued spectral measures:
\begin{prop}\label{prop:spcproj}
Let $T \in \Mcal$ be a spectral operator with idempotent-valued spectral measure $E$. Then, for every $B \in \Afr$,
\begin{equation}\label{eq:PEH} 
P(T,B)\HEu = E(B)\HEu.
\end{equation}
\end{prop}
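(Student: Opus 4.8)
The plan is to peel off the quasinilpotent summand of a spectral operator and match the scalar-type remainder against a normal model. First I would invoke Proposition~\ref{prop:spcdec} to write $T = S+Q$ with $S,Q\in\Mcal$, where $S$ is of scalar type, $Q$ is quasinilpotent, $SQ=QS$, and $S = \int_{\sigma(T)}\lambda\,E(d\lambda)$. This last identity exhibits $E$ as the idempotent-valued spectral measure of the scalar-type operator $S$; moreover, as recorded at the start of the proof of Theorem~\ref{thm:norm-scl}, $E(\sigma)\in\Mcal$ for every $\sigma\in\Afr$.

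Next I would deal with $S$. Applying Proposition~\ref{prop:smsim} to $E$ produces an invertible $A\in\Mcal$ for which every $F(\sigma):=A^{-1}E(\sigma)A$ is a self-adjoint idempotent; thus $F$ is an ordinary spectral measure and $N:=A^{-1}SA=\int_{\sigma(S)}\lambda\,F(d\lambda)$ is normal with $F$ as its resolution of the identity. For a normal operator the Haagerup--Schultz projection $P(N,B)$ coincides with the spectral projection $F(B)$: indeed $F(B)$ is $N$-invariant, has trace $\mu_N(B)$, and compresses $N$ to a normal operator whose Brown measure is concentrated in $B$, so the uniqueness clause~(v) of Theorem~\ref{thm:hsproj} gives $F(B)\le P(N,B)$, and equality of traces (with $\tau$ faithful) forces $F(B)=P(N,B)$. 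Hence $E(B)=A\,P(N,B)\,A^{-1}$, and, applying Theorem~\ref{thm:hsproj-sim}(iii) to $S=ANA^{-1}$,
\[
E(B)\HEu = A\,P(N,B)\,A^{-1}\HEu = A\,P(N,B)\HEu = P(ANA^{-1},B)\HEu = P(S,B)\HEu .
\]

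It remains to pass from $S$ to $T=S+Q$. The one delicate point is that $Q$ is assumed only norm-quasinilpotent, whereas Proposition~\ref{prop:hscomm} requires the perturbation to be s.o.t.-quasinilpotent; but the support of a Brown measure is always contained in the spectrum, so $\mu_Q$ is a probability measure supported in $\sigma(Q)=\{0\}$, i.e.\ $\mu_Q=\delta_0$, and the remark following Theorem~\ref{thm:sotq} then shows that $Q$ is s.o.t.-quasinilpotent. Since $S$ and $Q$ commute, Proposition~\ref{prop:hscomm} yields $P(T,B)=P(S+Q,B)=P(S,B)$, and therefore $P(T,B)\HEu = P(S,B)\HEu = E(B)\HEu$, as claimed. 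I expect the main obstacle to be organizational rather than computational: pinning down $E$ as the resolution of the identity of the normal model $N$ (through Proposition~\ref{prop:smsim} and the uniqueness of Haagerup--Schultz projections), and recognizing that norm-quasinilpotence automatically upgrades to s.o.t.-quasinilpotence inside a finite von Neumann algebra so that Proposition~\ref{prop:hscomm} applies.
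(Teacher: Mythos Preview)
Your argument is correct and takes a genuinely different route from the paper's. The paper proceeds via local spectral theory: it uses the identification $E(K)\HEu=\HEu_T(K)$ for closed $K$ (a general fact about spectral operators), combines this with Haagerup--Schultz's result (Proposition~\ref{prop:HSdecomp}) that $P(T,K)\HEu=\overline{\HEu_T(K)}$ for decomposable $T$, then extends from closed sets to arbitrary Borel sets by inner regularity, the lattice property (Theorem~\ref{thm:hsproj-lattice}), and a trace argument showing the inclusion $P(T,B)\HEu\subseteq E(B)\HEu$ cannot be strict.

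Your approach is more operator-algebraic and entirely internal to the machinery of Section~\ref{sec:prelims}: you strip off the commuting quasinilpotent part using Proposition~\ref{prop:spcdec}, conjugate the scalar part to a normal operator via Proposition~\ref{prop:smsim}, identify Haagerup--Schultz projections with spectral projections in the normal case directly from the characterizing properties in Theorem~\ref{thm:hsproj}, and then transport everything back using the similarity invariance (Theorem~\ref{thm:hsproj-sim}) and the perturbation result (Proposition~\ref{prop:hscomm}). This avoids any appeal to local spectral subspaces or to Proposition~\ref{prop:HSdecomp}, and handles all Borel sets at once rather than bootstrapping from closed sets. The one ingredient you invoke that the paper does not state explicitly is that the support of the Brown measure is always contained in the spectrum (needed to conclude $\mu_Q=\delta_0$ from $\sigma(Q)=\{0\}$); this is a standard property from~\cite{B83}, so the step is legitimate, but it is worth flagging. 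Conversely, the paper's route makes visible the connection between $E$, $P(T,\cdot)$, and the local spectral subspaces, which is conceptually useful elsewhere in the paper even if not strictly needed here.
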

\begin{proof}

It is known (see Corollary 1.2.25 in ~\cite{LN00}) that for a spectral operator $T$, and a closed set $K$, the range of the the spectral measure of $K$, $E(K)\HEu$,
is equal to the local spectral subspace $\HEu_T(K)$. 

Then, since $T$ is decomposable, by (Haagerup and Schultz's result)
Proposition~\ref{prop:HSdecomp} and the fact that for decomposable operators, the local spectral subspaces
for closed sets are closed, we have
$P(T,K)\HEu=\HEu_T(K)$.
Thus, the desired equality~\eqref{eq:PEH} holds for closed sets $B$.

Now, given an arbitrary $B \in \Afr$, by inner regularity of $\mu_T$, there is an increasing family $K_1\subseteq K_2\subseteq\cdots$ of closed subsets of $B$ such that
$\mu_T(B\setminus\bigcup_{k=1}^\infty K_n)=0$.
Together with the lattice property Theorem~\ref{thm:hsproj-lattice}, this implies
\[
P(T,B)=P(T,\bigcup_{n=1}^\infty K_n)=\bigvee_{n=1}^\infty P(T,K_n).
\]
Thus, we have
\begin{equation}\label{eq:P<E}
P(T,B)\HEu=\overline{\bigcup_{n=1}^\infty P(T,K_n)\HEu}=\overline{\bigcup_{n=1}^\infty E(K_n)\HEu}\subseteq E(B)\HEu.
\end{equation}

Let $p$ and, respectively, $p'$ be the orthogonal projection from $\HEu$ onto $E(B)\HEu$ and, respectively, $E(B^c)\HEu$.
Since $E(B)E(B^c)=0$, we have $p\wedge p'=0$.
However, from~\eqref{eq:P<E} we have $P(T,B)\le p$ and, likewise, $P(T,B^c)\le p'$.
We also have
\[
\tau(P(T,B))+\tau(P(T,B^c))=\mu_T(B)+\mu_T(B^c)=1.
\]
Since $\tau(p\wedge p')\ge\tau(p)+\tau(p')-1$, we cannot have $\tau(p)+\tau(p')>1$.
Thus, we must have $\tau(p)=\tau(P(T,B))$ and $\tau(p')=\tau(P(T,B^c))$.
This implies $p=P(T,B)$, namely, that~\eqref{eq:PEH} holds.
\end{proof}

\section{Angles between Haagerup--Schultz projections}
\label{sec:angles}

The following is well-known, but we provide a proof for completeness:
\begin{lemma}\label{lem:nzasum}
Let $V,W$ be closed subspaces of $\HEu$ with $V \cap W = \{0\}$ and $\overline{V+W}=\HEu$. Then the following  are equivalent:
\begin{enumerate}[(i)]
\item\label{it:aVW} $\alpha(V,W)>0$. 
\item\label{it:V+W} $V + W$ is closed.
\item\label{it:bdidem} There exists a bounded idempotent $e \in B(\HEu)$ such that
\[
e\HEu=V\quad\text{and}\quad(1-e)\HEu=W.
\]
\end{enumerate}
Moreover, to refine the implication \eqref{it:aVW}$\implies$\eqref{it:bdidem},
there is a continuous, strictly decreasing function $f:(0,1]\to [1,\infty)$ such that
\[
\|e\|\le f\big(1-\cos(\alpha(V,W))\big).
\]
\end{lemma}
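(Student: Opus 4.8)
The plan is to run the cycle $\eqref{it:aVW}\Rightarrow\eqref{it:V+W}\Rightarrow\eqref{it:bdidem}\Rightarrow\eqref{it:aVW}$, tracking a constant through the first two implications so as to read off the function $f$ in the refinement. Set $c=\cos(\alpha(V,W))\in[0,1]$; homogenizing the definition of the angle, this is the same as the bilinear estimate $|\langle v,w\rangle|\le c\,\|v\|\,\|w\|$ for all $v\in V$ and $w\in W$, and $\alpha(V,W)>0$ is equivalent to $c<1$. Everything rests on the elementary inequality, for $v\in V$ and $w\in W$,
\[
\|v+w\|^2=\|v\|^2+\|w\|^2+2\,\RealPart\langle v,w\rangle\ge\|v\|^2+\|w\|^2-2c\|v\|\,\|w\|\ge(1-c)\bigl(\|v\|^2+\|w\|^2\bigr),
\]
the last step being $2\|v\|\,\|w\|\le\|v\|^2+\|w\|^2$.

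For $\eqref{it:aVW}\Rightarrow\eqref{it:V+W}$: if $c<1$ and $v_n+w_n\to x$ with $v_n\in V$, $w_n\in W$, then applying the inequality to $v_n-v_m$ and $w_n-w_m$ shows $(v_n)$ and $(w_n)$ are Cauchy, hence converge to some $v\in V$, $w\in W$ (both closed), and $x=v+w\in V+W$. For $\eqref{it:V+W}\Rightarrow\eqref{it:bdidem}$: combined with $\overline{V+W}=\HEu$, closedness gives $V+W=\HEu$, so the bounded bijection $S\colon V\oplus_2 W\to\HEu$, $(v,w)\mapsto v+w$ (injective by $V\cap W=\{0\}$), has bounded inverse by the open mapping theorem; then $e:=\iota_V\circ\pi_V\circ S^{-1}$ (coordinate projection followed by inclusion) is an idempotent with $e\HEu=V$ and $(1-e)\HEu=\ran(1-e)=\ker e=W$. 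The displayed inequality gives $\|S(v,w)\|\ge(1-c)^{1/2}\|(v,w)\|$, hence $\|e\|\le\|S^{-1}\|\le(1-c)^{-1/2}$.

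For $\eqref{it:bdidem}\Rightarrow\eqref{it:aVW}$: given such $e$, take unit vectors $v\in V$, $w\in W$ and a unimodular $\lambda$ with $\bar\lambda\langle v,w\rangle=|\langle v,w\rangle|$; since $ev=v$ and $e(\lambda w)=0$, we get $1=\|e(v-\lambda w)\|\le\|e\|\,\|v-\lambda w\|=\|e\|\bigl(2(1-|\langle v,w\rangle|)\bigr)^{1/2}$, so $|\langle v,w\rangle|\le 1-(2\|e\|^2)^{-1}<1$; taking the supremum gives $c<1$. For the refinement, chaining the first two implications and observing that $1-c=1-\cos(\alpha(V,W))\in(0,1]$ when $\alpha(V,W)>0$, the choice $f(t)=t^{-1/2}$ works: it is continuous and strictly decreasing on $(0,1]$ with $f((0,1])=[1,\infty)$, and $\|e\|\le f\bigl(1-\cos(\alpha(V,W))\bigr)$.

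I expect no serious obstacle here: the statement is a repackaging of standard facts about topological complements of closed subspaces, and the only points needing a little care are the homogenization turning the angle condition into the bilinear inequality, the identity $\ker e=\ran(1-e)$ for an idempotent (used to identify $(1-e)\HEu$ with $W$), and verifying the monotonicity and range claimed for $f$. If one wanted the sharp constant, the displayed inequality can be tightened to $\|v+w\|^2\ge(1-c^2)\|v\|^2$, giving $\|e\|\le(1-c^2)^{-1/2}$, but $t^{-1/2}$ already suffices for the stated conclusion.
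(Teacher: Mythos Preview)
Your argument is correct and follows essentially the same route as the paper: both extract from the angle hypothesis the bilinear estimate $|\langle v,w\rangle|\le c\|v\|\|w\|$, turn it into a lower bound on $\|v+w\|$, and then invoke the open mapping/closed graph theorem. Your inequality $\|v+w\|^2\ge(1-c)(\|v\|^2+\|w\|^2)$ is a slightly cleaner variant of the paper's $\|v+w\|^2\ge2\eps\|v\|\|w\|$ (avoiding their case split), and your $f(t)=t^{-1/2}$ is a valid choice; the paper instead optimizes to obtain the sharper $f(t)=\bigl(t(2-t)\bigr)^{-1/2}$, which is exactly the $(1-c^2)^{-1/2}$ bound you note at the end.
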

\begin{proof}
\eqref{it:aVW} implies \eqref{it:V+W}: 
Let $\eps=1-\cos(\alpha(V,W))$.
Then $\eps>0$.
For $v \in V, w \in W$, we have $|\langle v,w\rangle|\le(1-\epsilon) \|v\|\|w\|$ and, thus,
\begin{multline}\label{eq:v+w}
\| v + w \|^2 = \|v\|^2 + \|w\|^2 + 2\RealPart\langle v,w\rangle \\
\ge\|v\|^2+\|w\|^2-2(1-\eps)\|v\|\,\|w\|\ge2\eps\|v\|\,\|w\|.
\end{multline}
So either $\|v\|$ or $\|w\|$ is $\le\|v+w\|/\sqrt{2\eps}$.
If $\|v\|$ is so bounded, then
\[
\|w\|\le\|v+w\|+\|v\|\le\left(1+\frac1{\sqrt{2\eps}}\right)\|v+w\|.
\]
By symmetry, we always have
\begin{equation}\label{eq:vwbound}
\|v\|,\|w\|\le\left(1+\frac1{\sqrt{2\eps}}\right)\|v+w\|.
\end{equation}
Consider a sequence $(v_n + w_n)_{n=1}^\infty$ with $v_n \in V$ and $w_n \in W$ that converges in $\HEu$ to a vector $z$.
We will show $z\in V+W$.
Using~\eqref{eq:vwbound}, we have that the sequences $(v_n)_{n=1}^\infty$ and $(w_n)_{n=1}^\infty$ are Cauchy, hence, converge to some elements $v\in V$ and $w\in W$,
respectively.
Hence, $z=v+w\in V+W$.

\eqref{it:V+W} implies \eqref{it:bdidem}:
The map $e:V+W\to V$ which is the identity on $V$ and has kernel equal to $W$ is well defined.
By the closed graph theorem, it is bounded.

\eqref{it:bdidem} implies \eqref{it:aVW}: 
If the angle were zero, we would have unit vectors $v_n \in V$ and $w_n \in W$
such that $\langle v_n, w_n \rangle \to 1$.
Then $\|v_n-w_n\|\to0$, but
$e(v_n-w_n)=v_n$.
This contradicts that $e$ is bounded.
  
In order to bound the norm of $e$, let $\eps=1-\cos(\alpha(V,W))$.
Let $w\in W$ and $v\in V$ with $\|w\|=1$ and $\|v\|=a$.
Then, using the first inequality in~\eqref{eq:v+w}, we get
\[
\|v+w\|^2\ge(1-a)^2+2a\eps,
\]
which yields
\[
\frac{\|e(v+w)\|}{\|v+w\|}\le\sqrt{\frac{a^2}{(1-a)^2+2\eps a}}.
\]
When $0<\epsilon<1$, the right hand side attains its maximum value of 
\[
f(\eps):=\frac{1}{\sqrt{\epsilon(2-\epsilon)}}
\]
when $a=1/(1-\eps)$.
\end{proof}

We now examine angles between Haagerup--Schultz projections of disjoint closed sets.

\begin{thm}\label{thm:disjointclosed}
If $T\in\Mcal$ is decomposable and if $F_1$ and $F_2$ are closed subsets of $\Cpx$ with $F_1\cap F_2=\emptyset$, then
\[
\alpha(P(T,F_1),P(T,F_2))>0.
\]
\end{thm}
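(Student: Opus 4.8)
The plan is to pass to a corner of $\Mcal$ where the spectrum splits into two disjoint compact pieces, and there to exhibit a bounded idempotent whose range and kernel lie inside the two Haagerup--Schultz projections; Lemma~\ref{lem:nzasum} then yields the conclusion. Write $q_i=P(T,F_i)$, and observe that we may assume $q_1,q_2\neq0$, since otherwise the asserted inequality is vacuous. Put $p=P(T,F_1\cup F_2)$, which by Theorem~\ref{thm:hsproj-lattice} equals $q_1\vee q_2$; it is a nonzero $T$-invariant projection, and $q_i\le p$ because $F_i\subseteq F_1\cup F_2$. I would work in the corner $p\Mcal p$ with the element $S=Tp=pTp$. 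By Theorem~\ref{thm:hsproj-sim}(i), $q_i=q_i\wedge p=P^{(p)}(S,F_i)$, so everything reduces to a statement about $S$ and its Haagerup--Schultz projections $q_1,q_2$; note also that $\alpha(q_1\HEu,q_2\HEu)$ is the same whether computed in $\HEu$ or in $p\HEu$.

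First I would pin down $\sigma(S)$. Since $T$ is decomposable and $F_1\cup F_2$ is closed, Proposition~\ref{prop:decom}(ii) gives $\sigma(S)\subseteq\overline{F_1\cup F_2}=F_1\cup F_2$, with the spectrum computed in $p\Mcal p$. As $\sigma(S)$ is compact while $F_1,F_2$ are disjoint and closed, the sets $\sigma_i:=\sigma(S)\cap F_i$ are disjoint and compact and $\sigma(S)=\sigma_1\sqcup\sigma_2$. Choosing $\phi$ holomorphic on a neighbourhood of $\sigma(S)$ with $\phi\equiv0$ near $\sigma_1$ and $\phi\equiv1$ near $\sigma_2$, the element $e=\phi(S)\in p\Mcal p$ given by the holomorphic functional calculus is an idempotent commuting with $S$, and the Riesz decomposition gives $p\HEu=\ran(e)+\ran(1-e)$ with $\ran(e),\ran(1-e)$ closed $S$-invariant subspaces satisfying $\sigma(S\restrict\ran(e))=\sigma_2$ and $\sigma(S\restrict\ran(1-e))=\sigma_1$.

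The heart of the argument is to show $\ran(1-e)\subseteq q_1\HEu$ and $\ran(e)\subseteq q_2\HEu$. Let $r_1$ be the orthogonal projection onto the closed subspace $\ran(1-e)$; it lies in $p\Mcal p$ (range projections of elements of a von Neumann algebra stay in the algebra) and is $S$-invariant. In the corner $r_1\Mcal r_1$ the spectrum of $Sr_1$ equals $\sigma(S\restrict\ran(1-e))=\sigma_1$ (a unital $C^*$-subalgebra is inverse-closed), so the Brown measure of $Sr_1$, being supported in its spectrum, is concentrated in $\sigma_1\subseteq F_1$; Theorem~\ref{thm:hsproj}(v) then gives $r_1\le P^{(p)}(S,F_1)=q_1$, i.e.\ $\ran(1-e)\subseteq q_1\HEu$. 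Symmetrically $\ran(e)\subseteq q_2\HEu$. Hence $q_1\HEu+q_2\HEu\supseteq\ran(1-e)+\ran(e)=p\HEu$, while $q_1\HEu,q_2\HEu\subseteq p\HEu$, so $q_1\HEu+q_2\HEu=p\HEu$ is closed. Moreover, by Theorem~\ref{thm:hsproj-lattice}, $q_1\wedge q_2=P(T,F_1\cap F_2)=P(T,\emptyset)=0$, so $q_1\HEu\cap q_2\HEu=\{0\}$. Applying Lemma~\ref{lem:nzasum} in the Hilbert space $p\HEu$, where $\overline{q_1\HEu+q_2\HEu}=p\HEu$, the closedness of $q_1\HEu+q_2\HEu$ forces $\alpha(P(T,F_1),P(T,F_2))=\alpha(q_1\HEu,q_2\HEu)>0$.

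The main obstacle is this third step: realising that the functional calculus idempotent separating $\sigma_1$ from $\sigma_2$ belongs to the corner $p\Mcal p$ and that its kernel and range are contained in the corresponding Haagerup--Schultz projections. This rests on the maximality clause Theorem~\ref{thm:hsproj}(v), on the fact that a Brown measure is supported in the spectrum, and on spectral permanence for the corners $r_i\Mcal r_i$. Decomposability of $T$ enters only through Proposition~\ref{prop:decom}(ii), to guarantee that $\sigma(S)$ actually splits as $\sigma_1\sqcup\sigma_2$.
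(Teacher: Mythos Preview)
Your proof is correct and follows essentially the same approach as the paper: restrict to the corner $p=P(T,F_1\cup F_2)$, use decomposability (via Proposition~\ref{prop:decom}) to get $\sigma(Tp)\subseteq F_1\cup F_2$, apply the Riesz holomorphic functional calculus to produce a bounded idempotent separating the two spectral pieces, use the maximality property of Haagerup--Schultz projections to place the range and kernel inside $P(T,F_1)\HEu$ and $P(T,F_2)\HEu$, and finish with Lemma~\ref{lem:nzasum}. The only differences are cosmetic: you spell out the passage through the orthogonal range projection $r_1$ and the appeal to Theorem~\ref{thm:hsproj}(v), and you check $q_1\wedge q_2=0$ directly via Theorem~\ref{thm:hsproj-lattice}, whereas the paper deduces it from the equalities $e(p\HEu)=P(T,F_1)\HEu$ and $(p-e)(p\HEu)=P(T,F_2)\HEu$.
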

\begin{proof}
Let $G= F_1 \cup F_2$, $p=P(T,G)$, and consider the operator $Tp$.
Since $T$ is decomposable, its spectrum $\sigma_{p\Mcal p}(Tp)$ (in the compression $p\Mcal p$) is a subset of $G$.
From Theorems~\ref{thm:hsproj-sim} and~\ref{thm:hsproj-lattice}, 
\[
P^{(p)}(Tp,F_i) = P(T,F_i)
\]
Since $\sigma(Tp) \subset G$, we can apply the holomorphic functional calculus for the function $1_{F_1}$ to $Tp$
and the resulting operator 
$e=1_{F_1}(Tp)$ is a bounded idempotent.
Since
$Tp$ restricted to the range of $e$ has spectrum contained in $F_1$,
we have $e\le P(Tp,F_1)$ and we get
\begin{equation}\label{eq:epH}
e(p\HEu) \subseteq P(Tp,F_1)(p\HEu) =P(T,F_1)(p\HEu) = P(T,F_1)\HEu.
\end{equation} 
Similarly, we have
\begin{equation}\label{eq:1-epH}
(p-e)(p\HEu)=1_{F_2}(Tp)(p(\HEu))\subseteq P(T,F_2)\HEu.
\end{equation}
Since 
\[
e(p\HEu)+(p-e)(p\HEu) = p\HEu=\overline{P(T,F_1)\HEu+P(T,F_2)\HEu},
\]
we must have equality for both inclusions in~\eqref{eq:epH} and~\eqref{eq:1-epH}
and that the sum of subspaces
$P(T,F_1)\HEu+P(T,F_2)\HEu$ is closed.
By Lemma~\ref{lem:nzasum}, this implies that the two projections have non-zero angle.
\end{proof}

The next example shows that the non-zero angle conclusion of the previous theorem may fail if $T$ is not decomposable.
\begin{example}\label{ex:disjointclosed}
Consider 
\[
T=\oplus_{k=1}^\infty T_k\in\bigoplus_{k=1}^\infty M_k(\Cpx),
\]
where the algebra
on the right is the $\ell^\infty$ sum
embedded into a finite von Neumann algebra $\Mcal$
and where $T_k$ is the $k\times k$ matrix
\[
T_k=\left(
\begin{smallmatrix}
-1&1 \\
&-1&1 \\
&&\ddots&\ddots \\
&&&-1&1 \\
&&&&0
\end{smallmatrix}
\right)
\]
consisting of $(-1,\ldots,-1,0)$ on the main diagonal, all entries on the diagonal above it being $1$, and all other entries of the matrix being $0$.
Note that the Brown measure of $T$ is supported on $\{-1,0\}$, but it is easy to see that the spectrum of $T$ is the closed disk of radius $1$
centered at $-1$.
Indeed, if $d=-1-z$ for $z\in\Cpx\setminus\{-1,0\}$, then considering the $k\times k$ diagonal matrix $D_k=\diag(d,d,\ldots,d,-z)$ and the $k\times k$ Jordan
block matrix $J_k$, so that $T_k-zI_k=D_k+J_k$, we have
\[
(T_k-zI_k)^{-1}=\left(\sum_{j=0}^{k-1}(-D_k^{-1}J_k)^j\right)D_k^{-1}=\left(\sum_{j=0}^{k-1}(-d^{-1}J_k)^j\right)D_k^{-1}.
\]
Note that this stays uniformly bounded in operator norm as $k\to\infty$ if and only if $|1+z|>1$.
From this, the assertion about the spectrum of $T$ follows.
In particular, $T$ is not decomposable, since the support of its Brown measure is much smaller than its spectrum. (Proposition ~\ref{prop:decom})

The vector $v_k=(1,1,\ldots,1)^t$ lies in the kernel of $T_k$ while the vector $w_k=(1,1,\ldots,1,0)^t$ lies $\ker(T_k+I_k)^k)$
and the angle between $v_k$ and $w_k$ is $\arccos(\sqrt{1-\frac1k})$.
This implies that angle between $P(T,\{-1\})$ and $P(T,\{0\})$ is zero.
\end{example}

\begin{defi}
Let $T\in\Mcal$.
Let $P(T,B)$ denote the Haagerup--Schultz projection of $T$ corresponding to $B \in \Afr$.
We say $T$ has the {\em non-zero angle property} (or NZA property) if for every $B\in \Afr$ satisfying $P(T,B)\neq 0$ and $P(T,B^c)\neq 0$, we have
\[
\alpha(P(T,B),P(T,B^c))> 0.
\]
We say $T$ has the {\em  uniformly non-zero angle property} (or UNZA property) if there exists $\kappa>0$ such that
for every $B$ satisfying $P(T,B)\neq 0$ and $P(T,B^c)\neq 0$, we have
\[
\alpha(P(T,B),P(T,B^c))\ge\kappa.
\]
\end{defi}

\begin{ques}
If $T$ satisfies the NZA property, must it also satisfy the UNZA property?
We don't know the answer, even assuming, for example, that $T$ has countable spectrum.
\end{ques}

Now, using Haagerup--Schultz projections, we construct idempotent-valued spectral measures assuming we have the UNZA property. 

\begin{lemma}\label{lem:unza-spc}
Assume $T\in\Mcal$ has the uniformly non-zero angle property.
Then there exists an idempotent-valued spectral measure $E$ with the following properties,
where $B \in \Afr$.
\begin{enumerate}[(a)]
\item\label{it:unzaidem.a} $E(B)\HEu=P(T,B)\HEu$ and $\ker E(B)=P(T,B^c)\HEu$,
\item\label{it:TEB} $TE(B)=E(B)T$,
\item\label{it:EBBrown} The Brown measure of the restriction of  $T$ to the range of $E(B)$ is concentrated in $B$.
\end{enumerate}
\end{lemma}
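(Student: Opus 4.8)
The plan is to build $E$ by hand from the Haagerup--Schultz projections, exploiting the fact that UNZA gives, for each $B$, a bounded idempotent $e_B$ with range $P(T,B)\HEu$ and kernel $P(T,B^c)\HEu$, uniformly bounded in norm. First I would invoke Lemma~\ref{lem:nzasum}: since $P(T,B)\HEu\cap P(T,B^c)\HEu=\{0\}$ (from $\tau(P(T,B))+\tau(P(T,B^c))=1$ and the trace estimate on meets, exactly as in the proof of Proposition~\ref{prop:spcproj}) and their closed sum is all of $\HEu$ (again by the lattice property Theorem~\ref{thm:hsproj-lattice} applied to $B\cup B^c=\Cpx$), the hypothesis $\alpha(P(T,B),P(T,B^c))\ge\kappa>0$ yields a bounded idempotent $E(B)$ with $E(B)\HEu=P(T,B)\HEu$ and $\ker E(B)=P(T,B^c)\HEu$. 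The refined norm bound in Lemma~\ref{lem:nzasum} gives $\|E(B)\|\le f(1-\cos\kappa)=:M$ uniformly, which will handle property~\eqref{it:ivsmiv} of Definition~\ref{def:idemspecmeas}. Since $P(T,B)$ is $T$-hyperinvariant and $P(T,B^c)$ as well, both $E(B)\HEu$ and $\ker E(B)$ are $T$-invariant, so $T$ commutes with $E(B)$, giving~\eqref{it:TEB}; and the Brown measure of $TP(T,B)$ in $P(T,B)\Mcal P(T,B)$ is concentrated in $B$ by Theorem~\ref{thm:hsproj}(iii), which transfers to the restriction of $T$ to $E(B)\HEu=P(T,B)\HEu$ and gives~\eqref{it:EBBrown}. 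Property~\eqref{it:unzaidem.a} is then just the definition of $E(B)$.

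Next I would verify that $B\mapsto E(B)$ is a spectral measure in the sense of Definition~\ref{def:idemspecmeas}. Condition~\eqref{it:ivsmi}, $E(\Cpx)=1$, is immediate since $P(T,\Cpx)=1$ and $P(T,\emptyset)=0$. For multiplicativity~\eqref{it:ivsmii}, $E(\sigma_1\cap\sigma_2)=E(\sigma_1)E(\sigma_2)$, I would argue that $E(\sigma_1)E(\sigma_2)$ is an idempotent (two commuting idempotents: they commute because their ranges and kernels are among the mutually nested/orthogonalizable Haagerup--Schultz ranges—more carefully, $P(T,\sigma_1\cap\sigma_2)\le P(T,\sigma_i)$ and $P(T,(\sigma_1\cap\sigma_2)^c)\ge P(T,\sigma_i^c)$, so one checks $E(\sigma_1)$ and $E(\sigma_2)$ both leave $P(T,\sigma_1\cap\sigma_2)\HEu$ and $P(T,(\sigma_1\cap\sigma_2)^c)\HEu$ invariant) with range $P(T,\sigma_1\cap\sigma_2)\HEu$ and kernel containing $P(T,(\sigma_1\cap\sigma_2)^c)\HEu$; by uniqueness of the idempotent with prescribed complementary range and kernel, it must equal $E(\sigma_1\cap\sigma_2)$. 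The key supporting facts are the lattice identities of Theorem~\ref{thm:hsproj-lattice}, which give $P(T,\sigma_1\cap\sigma_2)=P(T,\sigma_1)\wedge P(T,\sigma_2)$ and the De Morgan dual. For countable additivity~\eqref{it:ivsmiii} on disjoint families, with $\sigma=\bigcup_i\sigma_i$: the partial sums $S_n=\sum_{i=1}^n E(\sigma_i)$ are idempotents (mutually orthogonal by~\eqref{it:ivsmii}) of norm $\le M$ with range $\bigvee_{i=1}^n P(T,\sigma_i)\HEu=P(T,\bigcup_{i=1}^n\sigma_i)\HEu$, and I would show $S_n\to E(\sigma)$ in $\|\cdot\|_2$ using $\tau(E(\sigma)-S_n)$-type estimates together with the uniform norm bound, i.e. that a uniformly bounded sequence of idempotents in $\Mcal$ whose ranges increase to $P(T,\sigma)\HEu$ and whose kernels decrease to $P(T,\sigma^c)\HEu$ converges in $L^2$ to the idempotent with those limiting range/kernel; the trace $\tau(P(T,\bigcup_{i=1}^n\sigma_i))=\mu_T(\bigcup_{i=1}^n\sigma_i)\to\mu_T(\sigma)$ drives the convergence.

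The main obstacle I expect is the $\|\cdot\|_2$-convergence in~\eqref{it:ivsmiii}: idempotents, unlike orthogonal projections, are not controlled in $L^2$ by their traces alone, so I need the uniform norm bound $M$ to convert the monotone convergence of the underlying Haagerup--Schultz projections (which is clean, by the lattice theorem and normality of $\tau$) into convergence of the associated idempotents. Concretely I would write $E(\sigma)-S_n$, note it is an idempotent onto $P(T,\sigma)\HEu\ominus$(something) with kernel containing $P(T,\sigma^c)\HEu+\bigvee_{i\le n}P(T,\sigma_i)\HEu$, sandwich its $L^2$-norm between $\tau$ of the orthogonal projection onto its range and $M$ times the same, and let $n\to\infty$. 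A secondary subtlety is ensuring all the $E(\sigma_i)$ genuinely commute and that finite sums of them are idempotents with the expected ranges; this follows formally from~\eqref{it:ivsmii} once that is established, so the logical order is: construct $E(B)$ via Lemma~\ref{lem:nzasum}; prove~\eqref{it:unzaidem.a},~\eqref{it:TEB},~\eqref{it:EBBrown} and the uniform bound; prove~\eqref{it:ivsmii}; then deduce~\eqref{it:ivsmiii}; and finally collect~\eqref{it:ivsmi}--\eqref{it:ivsmiv} to conclude $E$ is a bounded idempotent-valued spectral measure.
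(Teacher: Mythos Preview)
Your overall architecture matches the paper's proof: construct $E(B)$ via Lemma~\ref{lem:nzasum}, get the uniform bound and properties~\eqref{it:unzaidem.a},~\eqref{it:TEB},~\eqref{it:EBBrown} exactly as you describe, and then verify Definition~\ref{def:idemspecmeas}\eqref{it:ivsmi}--\eqref{it:ivsmiv}. One structural point the paper makes explicit and you leave implicit is the algebraic-sum identity: for pairwise disjoint $B_1,\ldots,B_n$,
\[
P\Big(T,\bigcup_{j=1}^n B_j\Big)\HEu=P(T,B_1)\HEu+\cdots+P(T,B_n)\HEu,
\]
proved by iterating Lemma~\ref{lem:nzasum} with the UNZA bound. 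This is the workhorse for both~\eqref{it:ivsmii} and~\eqref{it:ivsmiii}, and you should isolate it.

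There is a genuine gap in your argument for multiplicativity~\eqref{it:ivsmii}. You claim $E(\sigma_1)$ and $E(\sigma_2)$ commute because both leave $P(T,\sigma_1\cap\sigma_2)\HEu$ and $P(T,(\sigma_1\cap\sigma_2)^c)\HEu$ invariant. That is not enough: on the first piece both act as the identity, fine, but on the complementary piece neither idempotent is zero in general (the kernel of $E(\sigma_i)$ is only $P(T,\sigma_i^c)\HEu$, a proper subspace of $P(T,(\sigma_1\cap\sigma_2)^c)\HEu$), and there is no reason their restrictions there commute. The fix---and this is exactly what the paper does---is to use the four-fold decomposition coming from the sum identity above applied to the partition $\{\sigma_1\cap\sigma_2,\ \sigma_1\cap\sigma_2^c,\ \sigma_1^c\cap\sigma_2,\ \sigma_1^c\cap\sigma_2^c\}$: write an arbitrary $\xi$ as $\xi_{11}+\xi_{10}+\xi_{01}+\xi_{00}$ accordingly and compute $E(\sigma_1)E(\sigma_2)\xi=\xi_{11}=E(\sigma_1\cap\sigma_2)\xi$ directly. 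No separate commutativity step is needed.

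For~\eqref{it:ivsmiii} the paper argues slightly differently from you: it first shows $\sum_{j\le n}E(B_j)=E(A_n)$ via the sum identity, and then proves $E(A_n)\to E(A)$ in strong operator topology (hence in $\|\cdot\|_2$) by splitting $\xi=\xi_0+\xi_1$ along $P(T,A^c)\HEu$ and $P(T,A)\HEu$ and using $P(T,A_n)\uparrow P(T,A)$ together with the uniform norm bound. Your trace-based route also works once~\eqref{it:ivsmii} is in hand: from finite additivity one gets $E(A)-E(A_n)=E(A\setminus A_n)$, an idempotent of norm at most $M$ with range projection $P(T,A\setminus A_n)$, so $\|E(A)-E(A_n)\|_2\le M\,\mu_T(A\setminus A_n)^{1/2}\to0$. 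Just be careful not to write ``$\ominus$'' here, since the relevant complements are oblique, not orthogonal.
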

\begin{proof}
By Lemma \ref{lem:nzasum}, for any $B \in \Afr$ there is a bounded idempotent $E(B)$ satisfying condition~\eqref{it:unzaidem.a}.
We verify that $E$ is indeed an idempotent-valued spectral measure by checking the conditions of Definition~\ref{def:idemspecmeas}.

Clearly, $E(\Cpx)=1$, so \ref{def:idemspecmeas}\eqref{it:ivsmi} holds.
By Lemma~\ref{lem:nzasum} and the UNZA hypothesis, we have uniform boundedness of the $E(B)$, so \ref{def:idemspecmeas}\eqref{it:ivsmiv} holds.

Note that if $B_1$ and $B_2$ are disjoint Borel subsets of $\Cpx$, then it follows from the UNZA property that $\alpha(P(T,B_1),P(T,B_2))>0$ and,
thus, by Theorem~\ref{thm:hsproj-lattice} and Lemma~\ref{lem:nzasum}, that
\begin{multline*}
P(T,B_1\cup B_2)\HEu=(P(T,B_1)\vee P(T,B_2))\HEu \\
=\overline{P(T,B_1)\HEu+P(T,B_2)\HEu}=P(T,B_1)\HEu+P(T,B_2)\HEu.
\end{multline*}
Iterating this, we see that if $B_1,\ldots,B_n$ are pairwise disjoint and $B=\bigcup_{j=1}^nB_j$, then
\begin{equation}\label{eq:PTBsum}
P(T,B)\HEu=P(T,B_1)\HEu+\cdots+P(T,B_n)\HEu.
\end{equation}

We now show that property \ref{def:idemspecmeas}\eqref{it:ivsmii} holds.
Let $B_1,B_2 \in \Afr$.
If $\xi\in\HEu$ then, by the above, we may write
$\xi=\xi_{00}+\xi_{01}+\xi_{10}+\xi_{11}$,
where
\begin{alignat*}{2}
\xi_{00}&\in P(T,B_1^c\cap B_2^c)\quad&\xi_{01}&\in P(T,B_1^c\cap B_2) \\
\xi_{10}&\in P(T,B_1\cap B_2^c)\quad&\xi_{11}&\in P(T,B_1\cap B_2).
\end{alignat*}
We have
\[
E(B_1\cap B_2)\xi_{ij}=\begin{cases}\xi_{11}&\text{if }i=j=1,\\0&\text{otherwise.}\end{cases}
\]
We also have
\begin{align*}
E(B_1)E(B_2)\xi_{00}&=E(B_1)0=0 \\
E(B_1)E(B_2)\xi_{01}&=E(B_1)\xi_{01}=0 \\
E(B_1)E(B_2)\xi_{10}&=E(B_1)0=0 \\
E(B_1)E(B_2)\xi_{11}&=E(B_1)\xi_{11}=\xi_{11}.
\end{align*}
Thus, we have
\[
E(B_1)E(B_2)\xi=\xi_{11}=E(B_1\cap B_2)\xi.
\]

We now show that property \ref{def:idemspecmeas}\eqref{it:ivsmiii} holds.
Let $B_1,B_2,\ldots \in \Afr$ be pairwise disjoint. 
Let 
$A_n=\bigcup_{i=1}^nB_i$ and $A=\bigcup_{i=1}^\infty B_i$.
Given $\xi\in\HEu$ and $n\in\Nats$, using the property proved at~\eqref{eq:PTBsum}, we may write
$\xi=\eta+\xi_1+\cdots+\xi_n$, where $\eta\in P(T,A_n^c)\HEu$ and $\xi_j\in P(T,B_j)\HEu$.
For each $j$, we have $E(B_j)\xi=\xi_j$ and
\[
E(A_n)\xi=\xi_1+\cdots+\xi_n=\sum_{j=1}^nE(B_j)\xi.
\]
Thus $E(A_n)=\sum_{j=1}^nE(B_j)$ and in order to prove property \ref{def:idemspecmeas}\eqref{it:ivsmiii}, it will suffice to show
that $E(A_n)$ converges to $E(A)$ in strong operator topology as $n\to\infty$.
Given $\xi\in\HEu$, we have $\xi=\xi_0+\xi_1$ for $\xi_0\in P(T,A^c)\HEu$ and $\xi_1\in P(T,A)\HEu$.
Then for all $n$, $E(A_n)\xi_0=E(A)\xi_0=0$.
Since $P(T,A_n)$ increases and converges in strong operator topology to $P(T,A)$ as $n\to\infty$,
the vector $\xi_1^{(n)}:=P(T,A_n)\xi_1$ converges to $P(T,A)\xi_1=\xi_1$ as $n\to\infty$.
Let $\eps>0$.
For all $n$ sufficiently large, we have
\[
\|\xi_1^{(n)}-\xi_1\| < \frac{\epsilon}{1+\sup_{B} \|E(B)\|}
\]
and for such $n$ we have
\begin{align*}
\|E(A_n)\xi-E(A)\xi\|&=\|E(A_n)\xi_1-\xi_1\| \\
&\le\|E(A_n)(\xi_1-\xi_1^{(n)})\|+\|E(A_n)\xi_1^{(n)}-\xi_1\| \\
&=\|E(A_n)(\xi_1-\xi_1^{(n)})\|+\|\xi_1^{(n)}-\xi_1\| \\
&\le(\|E(A_n)\|+1)\|\xi_1^{(n)}-\xi_1\|<\eps.
\end{align*}
This completes the proof of property \ref{def:idemspecmeas}\eqref{it:ivsmiii}.

We now prove~\eqref{it:TEB}.
Given $\xi\in\HEu$, we write $\xi=\xi_0+\xi_1$ where $\xi_0\in P(T,B^c)\HEu$ and $\xi_1\in P(T,B)\HEu$.
Since $E(B)\HEu = P(T,B)\HEu$ and $P(T,B^c)\HEu$ are invariant subspaces for $T$, we have
\[
E(B)T\xi=E(B)T(\xi_0+\xi_1)=E(B)T\xi_1=T\xi_1=TE(B)(\xi_0+\xi_1)=TE(B)\xi.
\]
This proves that $T$ and $E(B)$ commute.

The assertion~\eqref{it:EBBrown} follows immediately from $E(B)\HEu=P(T,B)\HEu$ and the property of Haagerup--Schultz projections. 
\end{proof}

\begin{thm}\label{thm:unzaspec}
Let $T\in \Mcal$. Then the following are equivalent:
\begin{enumerate}[(a)]
\item\label{it:unzaspec.a} $T$ has the UNZA property,
\item\label{it:unzaspec.b} there exist $S,Q\in\Mcal$ with $[S,Q]=0$, $S$ a scalar type operator and $Q$ s.o.t.-quasinilpotent, such that $T=S+Q$,
\item\label{it:unzaspec.c} there exist $A,N,Q' \in \Mcal$, with $[N,Q']=0$, $N$ normal, $Q'$ s.o.t.-quasi\-nil\-po\-tent, and $A$ invertible, such that $ATA^{-1}=N+Q'$.
\end{enumerate}
\end{thm}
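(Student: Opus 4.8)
The plan is to prove the cycle of implications \eqref{it:unzaspec.c}$\implies$\eqref{it:unzaspec.b}$\implies$\eqref{it:unzaspec.a}$\implies$\eqref{it:unzaspec.c}, with the hard work concentrated in the last implication, which is where Lemma~\ref{lem:unza-spc} does the heavy lifting.

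For \eqref{it:unzaspec.c}$\implies$\eqref{it:unzaspec.b}: if $ATA^{-1}=N+Q'$ with $N$ normal, $Q'$ s.o.t.-quasinilpotent, and $[N,Q']=0$, then set $S=A^{-1}NA$ and $Q=A^{-1}Q'A$. By Theorem~\ref{thm:norm-scl}, $S$ is of scalar type since it is similar (within $\Mcal$) to the normal operator $N$. We have $[S,Q]=0$ and $T=S+Q$. It remains to check $Q$ is s.o.t.-quasinilpotent; since s.o.t.-quasinilpotence is equivalent to the Brown measure being $\delta_0$ (the remark after Theorem~\ref{thm:sotq}), and $\mu_{A^{-1}Q'A}=\mu_{Q'}=\delta_0$ by Theorem~\ref{thm:hsproj-sim}(ii), this is immediate.

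For \eqref{it:unzaspec.b}$\implies$\eqref{it:unzaspec.a}: suppose $T=S+Q$ with $S$ scalar type, $Q$ s.o.t.-quasinilpotent, and $[S,Q]=0$. By Theorem~\ref{thm:norm-scl} write $S=ANA^{-1}$ with $N$ normal and $A$ invertible in $\Mcal$; then $A^{-1}TA=N+A^{-1}QA$, and $A^{-1}QA$ is again s.o.t.-quasinilpotent and commutes with $N$. Since angles between subspaces and the conditions $P(\cdot,B)\ne0$ are preserved under the similarity $\xi\mapsto A^{-1}\xi$ up to a fixed distortion — more precisely, by Theorem~\ref{thm:hsproj-sim}(iii), $P(A^{-1}TA,B)\HEu=A^{-1}P(T,B)\HEu$, and for a fixed invertible $A$ the angle between $A^{-1}V$ and $A^{-1}W$ is bounded below in terms of $\alpha(V,W)$ and $\|A\|,\|A^{-1}\|$ — it suffices to prove the UNZA property for $N+Q_0$ with $Q_0:=A^{-1}QA$. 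By Proposition~\ref{prop:hscomm}, since $Q_0$ is s.o.t.-quasinilpotent and commutes with $N$, we get $P(N+Q_0,B)=P(N,B)$ for every $B\in\Afr$. But $N$ is normal, so $P(N,B)$ is the genuine spectral projection $E_N(B)$, and $P(N,B^c)=E_N(B^c)=1-E_N(B)$ is the orthogonal complement; hence $\alpha(P(N,B),P(N,B^c))=\pi/2$ whenever both are nonzero. This gives the uniform bound $\kappa$ (after accounting for the fixed distortion from $A$), establishing \eqref{it:unzaspec.a}.

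For \eqref{it:unzaspec.a}$\implies$\eqref{it:unzaspec.c}: this is the main step. Assume $T$ has the UNZA property and apply Lemma~\ref{lem:unza-spc} to obtain an idempotent-valued spectral measure $E$ with $E(B)\HEu=P(T,B)\HEu$, $\ker E(B)=P(T,B^c)\HEu$, $[T,E(B)]=0$, and the Brown measure of $T\restriction E(B)\HEu$ concentrated in $B$. By Proposition~\ref{prop:smsim}, there is an invertible $A\in\Mcal$ so that $\widetilde E(B):=A E(B)A^{-1}$ is a self-adjoint (orthogonal) projection for every $B$; then $\widetilde E$ is a genuine projection-valued spectral measure in $\Mcal$, and it commutes with $ATA^{-1}=:R$. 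Define $N:=\int_{\Cpx}\lambda\,\widetilde E(d\lambda)$, a normal operator in $\Mcal$ (the integral converging in operator norm since $\sigma(T)$, and hence the relevant region, is bounded; one partitions $\sigma(T)$ into small Borel pieces and uses boundedness of $\widetilde E$), and set $Q':=R-N$. Then $[N,Q']=0$ because both are built from $R$ and $\widetilde E$. The crux is to show $Q'$ is s.o.t.-quasinilpotent, i.e.\ $\mu_{Q'}=\delta_0$. The argument: on each piece $E(B)\HEu$ the operator $N$ differs from $R$ by something whose Brown measure, computed in the corner, is concentrated near the single point around which $B$ was chosen — this is exactly property~\eqref{it:EBBrown} of the lemma together with the fact that subtracting a scalar from $R$ on that corner shifts the Brown measure. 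Taking a sequence of finer and finer Borel partitions and using the lattice and pushforward properties (Theorems~\ref{thm:hsproj-lattice}, the joint-Brown-measure machinery of Theorem~\ref{thm:brown-comm}/\ref{thm:hsproj-comm} and Proposition~\ref{prop:hscomm}) forces $\mu_{Q'}$ to concentrate at $0$. I expect the technical heart — and the step most likely to need care — to be this last point: making rigorous that the "diagonal part" $N$ captures all of the Brown measure of $R$ so that the "off-diagonal remainder" $Q'$ is s.o.t.-quasinilpotent, uniformly controlling the corners as the partition refines. Once $Q'$ is shown s.o.t.-quasinilpotent, pulling back by $A$ — or rather, noting that $N+Q'=R=ATA^{-1}$ already — gives precisely \eqref{it:unzaspec.c}.
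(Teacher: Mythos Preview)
Your cycle runs \eqref{it:unzaspec.c}$\Rightarrow$\eqref{it:unzaspec.b}$\Rightarrow$\eqref{it:unzaspec.a}$\Rightarrow$\eqref{it:unzaspec.c}, whereas the paper does \eqref{it:unzaspec.a}$\Rightarrow$\eqref{it:unzaspec.b}$\Rightarrow$\eqref{it:unzaspec.c}$\Rightarrow$\eqref{it:unzaspec.a}. The content matches up: your \eqref{it:unzaspec.c}$\Rightarrow$\eqref{it:unzaspec.b} is the paper's \eqref{it:unzaspec.b}$\Rightarrow$\eqref{it:unzaspec.c} read backwards, and your \eqref{it:unzaspec.b}$\Rightarrow$\eqref{it:unzaspec.a} is the paper's \eqref{it:unzaspec.c}$\Rightarrow$\eqref{it:unzaspec.a} after first passing to $N+Q_0$. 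Those two implications are fine; your angle-distortion remark can be made quantitative via Lemma~\ref{lem:nzasum} (a bound on $\|e\|$ for the conjugated idempotent gives a uniform lower bound on the angle), though the paper's direct contradiction argument is slightly slicker.

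The one place your sketch is genuinely murky is \eqref{it:unzaspec.a}$\Rightarrow$\eqref{it:unzaspec.c}, specifically the s.o.t.-quasinilpotence of $Q'=R-N$. Your talk of ``finer and finer Borel partitions'' and ``uniformly controlling the corners as the partition refines'' is a detour you do not need, and it is not clear it would close cleanly. The paper's argument (carried out before conjugating, in its \eqref{it:unzaspec.a}$\Rightarrow$\eqref{it:unzaspec.b}) is much more direct and transplants to your setting as follows: since $\widetilde E$ is the spectral measure of the normal operator $N$, one has $P(N,B)=\widetilde E(B)$; and by Theorem~\ref{thm:hsproj-sim}(iii) together with Lemma~\ref{lem:unza-spc}(a), $P(R,B)\HEu=AP(T,B)\HEu=AE(B)\HEu=\widetilde E(B)\HEu$. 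Hence $P(R,B)=P(N,B)$ for every Borel $B$. Now the joint Haagerup--Schultz projection satisfies $P((R,N):B_1\times B_2)=P(R,B_1)\wedge P(N,B_2)$, so $\mu_{(R,N)}$ is concentrated on the diagonal $\{(z,z):z\in\Cpx\}$, and Proposition~\ref{prop:hscomm} gives $P(Q',B)=P((R,N):\{(\lambda_1,\lambda_2):\lambda_1-\lambda_2\in B\})=0$ whenever $0\notin B$. That is exactly $\mu_{Q'}=\delta_0$. Replace your partition-refinement paragraph with this and the proof is complete.
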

\begin{proof}
\eqref{it:unzaspec.a}$\implies$\eqref{it:unzaspec.b}.
Assume $T$ has the UNZA property.
Using the spectral measure $E$ from Lemma~\ref{lem:unza-spc}, define
\[ 
S = \int_{\Cpx} \lambda E(d\lambda).
\]
This integral exists, and $S$ is a bounded operator, since, by construction, $E(B)=0$ for $B \subset \sigma(T)^c$.
Moreover, by definition, $S$ an operator of scalar type.

Let $Q = T - S$. Since $TE(B)=E(B)T$, it follows that $Q$ and $S$ commute. 
We claim that $Q$ is s.o.t.-quasinilpotent. 
Using Proposition~\ref{prop:spcproj} and Lemma~\ref{lem:unza-spc},
for every $B \in \Afr$ we have
\[
P(S,B)\HEu=E(B)\HEu=P(T,B)\HEu,
\]
so the Haagerup--Schultz projections of $S$ and $T$ agree.
Using the pushforward result from Proposition~\ref{prop:hscomm}, we have
\[ 
  P(Q,B) = P( (T,S):\{(\lambda_1,\lambda_2) : \lambda_1 - \lambda_2 \in B\} ) 
\] 
Since $P(T,\cdot)=P(S,\cdot)$, and $P( (T,S) : B_1 \times B_2 ) = P(T,B_1) \wedge P(T,B_2)$, it follows that 
$\mu_{(T,S)}$ is concentrated on the set $\{(z,z): z \in \Cpx \}$. 
Hence, if $0 \notin B$, then $P(Q,B)=0$.
Thus implies that $Q$ is s.o.t.-quasinilpotent.

\eqref{it:unzaspec.b}$\implies$\eqref{it:unzaspec.c}.
Assuming $T=S+Q$ with $S$ of scalar type and $Q$ s.o.t.-quasinilpotent and commuting with $S$,
let $A\in\Mcal$ be the invertible operator from Theorem~\ref{thm:norm-scl} so that $N:=ASA^{-1}$ is normal.
Let $Q'=AQA^{-1}$.
Since similarity doesn't change the Brown measure, we have that $Q'$ is s.o.t.-quasinilpotent.
Moreover, $N$ and $Q'$ commute.
We have $ATA^{-1}=N+Q'$, as required.

\eqref{it:unzaspec.c}$\implies$\eqref{it:unzaspec.a}. Assume $ATA^{-1}=N+Q'$ as described in~\eqref{it:unzaspec.c}.
Let $B \in \Afr$. Then, from Theorem \ref{thm:hsproj-sim} and Proposition~\ref{prop:hscomm}, we get
 \begin{equation}\label{eq:unza1}
  P(N,B) \HEu = P(ATA^{-1},B) \HEu = AP(T,B)\HEu 
 \end{equation}
If $T$ fails to have the UNZA property, there exist sets $B_n$ so that 
 \[ \alpha\left(P(T,B_n),P(T,B_n^c) \right) \to 0 \]
Then, there exist $v_n \in P(T,B_n)\HEu$, $w_n \in P(T,B_n^c)\HEu$ such that $ \left\langle v_n, w_n \right \rangle \to 1$, and $\norm{v_n}=\norm{w_n}=1$. 
Since $N$ is normal, its spectral subspaces are orthogonal. So, from~\eqref{eq:unza1}, we have
\begin{equation}
\norm{A(v_n-w_n)}^2 = \norm{Av_n}^2 + \norm{Aw_n}^2 \geq 2 \norm{A^{-1}}^{-2} > 0,
\end{equation}
which contradicts the fact that $\norm{v_n - w_n} \to 0$. 
\end{proof} 

\begin{cor}
Let $T \in \Mcal$.
Then $P(T,\cdot)$ defines a spectral measure if and only if $T=N+Q$ for some $N,Q\in\Mcal$, where $N$ is normal, $Q$ is s.o.t.-quasinilpotent, and $NQ=QN$.
\end{cor}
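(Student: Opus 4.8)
The plan is to read this off from Theorem~\ref{thm:unzaspec}, together with the standard fact that for a normal operator the Haagerup--Schultz projections coincide with the spectral projections. Recall that if $N\in\Mcal$ is normal, then by Theorem~\ref{thm:brownmeas} its Brown measure is its spectral distribution, and one checks directly from the characterizing properties (i)--(v) of Theorem~\ref{thm:hsproj} that $P(N,B)=1_B(N)$, the spectral projection of $N$ for $B$; in particular $B\mapsto P(N,B)$ is a (projection-valued) spectral measure.

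For the ``if'' direction, suppose $T=N+Q$ with $N$ normal, $Q$ s.o.t.-quasinilpotent and $NQ=QN$. Then Proposition~\ref{prop:hscomm} gives $P(T,B)=P(N+Q,B)=P(N,B)=1_B(N)$ for every $B\in\Afr$, so $P(T,\cdot)$ is a spectral measure.

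For the ``only if'' direction, suppose $P(T,\cdot)$ is a spectral measure. Then for every $B$ we have $P(T,B)P(T,B^c)=P(T,\emptyset)=0$ and $P(T,B)+P(T,B^c)=P(T,\Cpx)=1$, so $P(T,B)$ and $P(T,B^c)$ are complementary orthogonal projections and $\alpha(P(T,B),P(T,B^c))=\pi/2$ whenever both are non-zero. Hence $T$ has the UNZA property (with $\kappa=\pi/2$) and Theorem~\ref{thm:unzaspec} applies. I would then revisit that construction: in Lemma~\ref{lem:unza-spc}, $E(B)$ is the unique bounded idempotent with range $P(T,B)\HEu$ and kernel $P(T,B^c)\HEu$, and since $P(T,B)$ is itself such an idempotent and is self-adjoint, we may simply take $E(B)=P(T,B)$. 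Thus in the proof of Theorem~\ref{thm:unzaspec}\,\eqref{it:unzaspec.a}$\Rightarrow$\eqref{it:unzaspec.b}, the operator $S=\int_{\sigma(T)}\lambda\,E(d\lambda)=\int_{\sigma(T)}\lambda\,P(T,d\lambda)$ is an integral against the self-adjoint projection-valued measure $P(T,\cdot)$, hence normal; set $N=S$. Then $Q=T-N$ commutes with $N$ (by Lemma~\ref{lem:unza-spc}\,\eqref{it:TEB}, $T$ commutes with each $E(B)$, hence with $S$), and $Q$ is s.o.t.-quasinilpotent exactly as in that proof: via Proposition~\ref{prop:hscomm} and $P(S,\cdot)=P(T,\cdot)$, the joint Brown measure $\mu_{(T,S)}$ is concentrated on the diagonal, so $P(Q,B)=0$ whenever $0\notin B$. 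This yields $T=N+Q$ of the desired form.

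There is no deep obstacle here; the corollary is essentially a repackaging of Theorem~\ref{thm:unzaspec}. The one point requiring care is upgrading the ``scalar type'' conclusion of that theorem to ``normal'': this works precisely because the self-adjointness hypothesis on $P(T,\cdot)$ allows $P(T,\cdot)$ itself to serve as the idempotent-valued spectral measure in the construction, so that the integral defining $S$ is taken against a genuine spectral measure. Everything else is bookkeeping with the identity $P(N,B)=1_B(N)$ for normal $N$.
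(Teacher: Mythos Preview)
Your proof is correct and follows essentially the same approach as the paper: both directions use Proposition~\ref{prop:hscomm} for the ``if'' part, and for the ``only if'' part both observe that orthogonality of $P(T,B)$ and $P(T,B^c)$ gives the UNZA property, then revisit the construction in Theorem~\ref{thm:unzaspec}\,\eqref{it:unzaspec.a}$\Rightarrow$\eqref{it:unzaspec.b} noting that $E(B)=P(T,B)$ is already self-adjoint, so the resulting scalar-type operator $S$ is in fact normal. You spell out the details more fully than the paper does, but the argument is the same.
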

\begin{proof}
If $T=N+Q$ as described, then, from Proposition~\ref{prop:hscomm}, $P(T,\cdot)=P(N,\cdot)$ is a spectral measure.
On the other hand, if $P(T,\cdot)$ is a spectral measure, $P(T,B)P(T,B^c)=0$, and hence the corresponding subspaces are orthogonal, and so $T$ has the UNZA property. The construction
\eqref{it:unzaspec.a}$\implies$\eqref{it:unzaspec.b} in the proof of Theorem~\ref{thm:unzaspec} then yields $T=S+Q$ with $S$ actually normal.
\end{proof}

It is well known and is also easily seen from the above that spectral operators are decomposable.
With the help of Theorem \ref{thm:unzaspec}, we get the following equivalance:

\begin{cor}\label{cor:specdec}
Let $T \in \Mcal$. Then, $T$ is spectral if and only if $T$ is decomposable and satisfies the UNZA property.
\end{cor}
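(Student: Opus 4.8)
The plan is to establish the two implications separately; Lemma~\ref{lem:unza-spc} and Proposition~\ref{prop:decom} do the real work.

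\emph{Spectral $\Rightarrow$ decomposable and UNZA.} That a spectral operator is decomposable is well known and is recalled just before the statement, so it remains to extract the UNZA property. By Proposition~\ref{prop:spcdec} we may write $T=S+Q$ with $S$ scalar type, $Q$ norm-quasinilpotent and $SQ=QS$; since a norm-quasinilpotent operator has Brown measure $\delta_0$ it is s.o.t.-quasinilpotent, so $T=S+Q$ is exactly situation~\eqref{it:unzaspec.b} of Theorem~\ref{thm:unzaspec}, and \eqref{it:unzaspec.b}$\Rightarrow$\eqref{it:unzaspec.a} yields the UNZA property. (Alternatively, one argues directly from Propositions~\ref{prop:spcproj} and~\ref{prop:smsim}: conjugating the idempotent-valued spectral measure $E$ of $T$ by the invertible $A\in\Mcal$ of Proposition~\ref{prop:smsim} turns $E(B)$ and $E(B^c)=1-E(B)$ into complementary orthogonal projections, so $A^{-1}P(T,B)\HEu\perp A^{-1}P(T,B^c)\HEu$ by Proposition~\ref{prop:spcproj}; for unit vectors $v\in P(T,B)\HEu$ and $w\in P(T,B^c)\HEu$ phased so that $\langle v,w\rangle\ge 0$ this gives $2\|A\|^{-2}\le\|A^{-1}(v-w)\|^2\le\|A^{-1}\|^2\|v-w\|^2=\|A^{-1}\|^2(2-2\langle v,w\rangle)$, hence $\cos\alpha(P(T,B),P(T,B^c))\le 1-\|A\|^{-2}\|A^{-1}\|^{-2}$, a bound independent of $B$.)

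\emph{Decomposable and UNZA $\Rightarrow$ spectral.} The UNZA property alone gives, via Lemma~\ref{lem:unza-spc}, an idempotent-valued spectral measure $E$ in $\Mcal$ with $E(B)\HEu=P(T,B)\HEu$, $\ker E(B)=P(T,B^c)\HEu$, and $TE(B)=E(B)T$ for every $B\in\Afr$. This already supplies everything in the definition of a spectral operator except the spectral containment, so it remains to show $\sigma(T,E(B)\HEu)\subseteq\overline{B}$ for all $B$. Fix $B$ with $p:=P(T,B)\ne0$. As $p\HEu=E(B)\HEu$ is $T$-invariant, the restriction of $T$ to $p\HEu$ is the operator induced by the corner element $pTp\in p\Mcal p$; since $p\Mcal p$ and $B(p\HEu)$ are unital $C^*$-algebras sharing the unit $p$, spectral permanence gives $\sigma(T,E(B)\HEu)=\sigma_{p\Mcal p}(pTp)$. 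Now $T$ is decomposable, so Proposition~\ref{prop:decom}(ii) gives $\sigma_{p\Mcal p}(pTp)\subseteq\overline{B}$, as desired. Running this for all $B\in\Afr$ verifies condition (vi) for $E$ (the complementary idempotents $E(B^c)=1-E(B)$ being handled by applying the same argument to $B^c$), so $E$ exhibits $T$ as a spectral operator.

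\emph{Where the difficulty lies.} The argument is short, and the only step that is not purely formal is the identification $\sigma(T,E(B)\HEu)=\sigma_{p\Mcal p}(pTp)$: one must know that the spectrum of the restriction of $T$ to the $T$-invariant subspace $P(T,B)\HEu$, computed as a Hilbert-space operator, coincides with the spectrum of the compression $pTp$ computed inside the corner von Neumann algebra $p\Mcal p$. This rests on spectral permanence for $C^*$-algebras, and it is precisely what allows the conclusion of Proposition~\ref{prop:decom} to be quoted verbatim. (A longer route to this implication would instead invoke Theorem~\ref{thm:unzaspec} to write $T=S+Q$ with $S$ scalar type and $Q$ only s.o.t.-quasinilpotent, then use decomposability to upgrade $Q$ to a genuinely norm-quasinilpotent operator---cover $\sigma(T)$ by small Borel sets $B_i$ on whose spectral subspaces both $T$ and $S$ have spectrum inside $\overline{B_i}$, and use subadditivity of the spectrum for the commuting pair $T|_{E(B_i)\HEu},\,S|_{E(B_i)\HEu}$ to trap $\sigma(Q)$ in arbitrarily small neighbourhoods of $0$---and finish with Proposition~\ref{prop:spcdec}; the proof above sidesteps this.)
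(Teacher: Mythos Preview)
Your proof is correct and follows essentially the same approach as the paper: the forward direction via Proposition~\ref{prop:spcdec} and Theorem~\ref{thm:unzaspec}, and the converse via Lemma~\ref{lem:unza-spc} together with Proposition~\ref{prop:decom}. Your explicit justification via spectral permanence of the identification $\sigma(T,E(B)\HEu)=\sigma_{p\Mcal p}(pTp)$ is a useful addition that the paper leaves implicit.
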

\begin{proof}
It is clear from definitions that spectrality implies decomposability and, from Theorem~\ref{thm:unzaspec}
and the characterization in Proposition~\ref{prop:spcdec}, that spectrality implies the UNZA property.

To prove the converse, suppose that $T$ is decomposable and has the UNZA property.
Let $E$ be the idempotent-valued spectral measure constructed in Lemma~\ref{lem:unza-spc}.
By decomposability and Proposition~\ref{prop:decom}, we see that for each $B \in \Afr$,
the spectrum of the restriction of $T$ to $E(B)$ is contained in the closure of $B$.
Thus, $T$ is spectral.
\end{proof}

\section{Some non-spectral but strongly decomposable operators}
\label{sec:circ}

The following simple example constructs an operator which is decomposable (even Borel and hence strongly decomposable), but not spectral.

\begin{example}
By a standard construction, we can realize the von Neumann algebra direct sum 
\[ 
  \Mcal = \bigoplus_{n=1}^\infty M_{2} (\Cpx)
\]
as a von Neumann subalgebra of the hyperfinite II$_1$ factor. 

Let 
\begin{equation}\label{eq:Toplus}
T  = \bigoplus_{n=1}^\infty \left(\begin{matrix} 0 & 1 \\ 0 & 1/n \end{matrix} \right) 
\end{equation}
We claim that $\sigma(T)=\{1,1/2,1/3...\} \cup \{0\}$.
This is easily seen, for given $\lambda \notin \{0\} \cup \{1/n\}_{n=1}^\infty$, we have
\[
\left( \begin{matrix} \lambda & -1 \\ 0 & \lambda - 1/n \end{matrix} \right)^{-1} = (\lambda^2 - \lambda/n)^{-1} \left( \begin{matrix} \lambda - 1/n & 1 \\ 0 & \lambda \end{matrix} \right) 
\]
and this is uniformly bounded in norm as $n\to\infty$.

It is well known that every operator (like $T$) with countable spectrum is decomposable.
In fact (see~\cite{DNZ18}) it is even Borel decomposable, which is a stronger condition than strong decomposability.

The vector $(1,0)^t$ is an eigenvector with eigenvalue $0$ for each matrix block in~\eqref{eq:Toplus}.
For each $n$, the vector $(1,1/n)^t$ is the other eigenvector of the $n$th matrix block in~\eqref{eq:Toplus}, with eigenvalue $1/n$.
But the angle between $(1,1/n)^t$ and $(1,0)^t$ goes to $0$ as $n\to\infty$.
This implies
\[
\alpha( P(T,\{0\}),P(T,\Cpx \setminus \{0\})) = 0.
\]
So the operator $T$ fails to have the non-zero angles property and, by Corollary~\ref{cor:specdec}, is not spectral.

This concludes the example.
\end{example}

\medskip

The rest of this section is devoted to showing that no circular free Poisson operator is spectral.  This includes the case of Voiculescu's circular operator.
   
\begin{thm}\label{thm:circnonspec}
Let $Z$ be a circular free Poisson operator. Then $Z$ fails to have the non-zero angle property and, thus, is not spectral.
\end{thm}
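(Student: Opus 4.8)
The plan is to produce a single Borel set on which the angle between the two Haagerup--Schultz projections is zero. By the results recalled in Section~\ref{subsec:cfP}, $Z$ is an R-diagonal $\DT(\mu,1)$ operator whose Brown measure $\mu$ is the uniform probability measure on the annulus $A_{\sqrt{c-1},\sqrt c}$ (the unit disk when $c=1$). Consequently $P(Z,A_{0,r})\ne 0$ precisely when $r>\sqrt{c-1}$, $P(Z,A_{r,\infty})\ne 0$ precisely when $r<\sqrt c$, and whenever $\sqrt{c-1}<r<\sqrt c$ the circle $\{|z|=r\}$ is $\mu$-null, so $P(Z,A_{r,\infty})=P(Z,A_{0,r}^c)$. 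Fix such an $r$. It then suffices to construct unit vectors $v_k\in P(Z,A_{0,r})\HEu$ and $w_k\in P(Z,A_{r,\infty})\HEu$ with $\langle v_k,w_k\rangle\to 1$: this gives $\alpha\big(P(Z,A_{0,r}),P(Z,A_{0,r}^c)\big)=0$, so $Z$ fails the NZA property, and since spectrality implies the UNZA and hence the NZA property (Corollary~\ref{cor:specdec}), $Z$ is not spectral.

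Realize $Z$ on $L^2(\Mcal,\tau)$ via its $\DT(\mu,1)$ model $Z=D+T$, choosing the normal operator $D$ so that $|D|$ is non-decreasing along the defining filtration $(p_t)_{t\in[0,1]}$. Each $p_t$ is then a $Z$-invariant projection, the corner $p_tZp_t$ is again a DT operator, and comparing traces and Brown-measure supports through the uniqueness clause of Theorem~\ref{thm:hsproj} identifies $P(Z,A_{0,r})$ with the spectral projection $\pi_r:=1_{[0,r]}(|D|)$; this also follows from Theorem~\ref{thm:sotq}. For small $\delta>0$ set $e_\delta:=1_{(r-\delta,\,r]}(|D|)$, a non-zero projection with $\tau(e_\delta)=r^2-(r-\delta)^2\to 0$, and put $v_\delta:=\widehat{e_\delta}/\|e_\delta\|_2$. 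Since $e_\delta\le\pi_r$ we have $\pi_r\widehat{e_\delta}=\widehat{e_\delta}$, so $v_\delta\in P(Z,A_{0,r})\HEu$ automatically. Everything is thus reduced to the claim that $\operatorname{dist}_{L^2}\big(\widehat{e_\delta},\,P(Z,A_{r,\infty})\HEu\big)=o\big(\|e_\delta\|_2\big)$ as $\delta\to0$; granting it, the normalization $w_\delta$ of $P(Z,A_{r,\infty})\widehat{e_\delta}$ lies in $P(Z,A_{r,\infty})\HEu$ and, since $P(Z,A_{r,\infty})$ is a self-adjoint projection, satisfies $\langle v_\delta,w_\delta\rangle=\|P(Z,A_{r,\infty})\widehat{e_\delta}\|_2/\|e_\delta\|_2\to1$, as desired.

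To prove this claim one would invoke Proposition~\ref{prop:disks}: a vector $\eta$ belongs to $P(Z,A_{r,\infty})\HEu$ as soon as there are $\eta_n$ with $Z^n\eta_n\to\eta$ and $\limsup_n\|\eta_n\|^{1/n}\le 1/r$. The natural place to build such $\eta_n$ is inside the $Z$-invariant corner $\pi_r\Mcal\pi_r$, on which $Z_r:=\pi_rZ\pi_r$ is, up to a scalar factor, again a circular free Poisson operator, with $\sigma(Z_r)=A_{\sqrt{c-1},r}$ and with $\widehat{e_\delta}$ sitting in the ``outer-edge'' part of the spectral picture; one then needs approximate preimages $\eta_n$ of $\widehat{e_\delta}$ under $Z_r^n$ whose $L^2$-norms grow no faster than $\big((1/r)+o_\delta(1)\big)^n$ after discarding a vector of norm $o(\|e_\delta\|_2)$. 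This estimate is the main obstacle. A crude operator-norm bound is useless here: when $c>1$, $\|Z_r^{-n}\|^{1/n}$ converges to the spectral radius $1/\sqrt{c-1}$ of $Z_r^{-1}$, which only yields the trivial inclusion $\widehat{e_\delta}\in P(Z,A_{\sqrt{c-1},\infty})\HEu=\HEu$. One must instead use that $\widehat{e_\delta}$ is genuinely concentrated near the outer radius $r$, exploiting the explicit R-diagonal/polar structure of $Z_r$ together with the norm identities of Propositions~\ref{prop:Rdiag} and~\ref{prop:cfPnorms} (and, for $c=1$, a separate argument, since then $Z_r$ is not invertible). Conceptually this reflects the failure of normality of $Z$: were $Z$ normal, $P(Z,A_{0,r})\HEu$ and $P(Z,A_{r,\infty})\HEu$ would be orthogonal, intersecting only in the radius-$r$ eigenspace, so no near-parallel pair could exist --- and making precise why this fails here is exactly the content of the explicit construction.
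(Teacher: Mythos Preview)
Your proposal is not a proof: you explicitly flag the crucial step --- showing that $\widehat{e_\delta}$ is $o(\|e_\delta\|_2)$-close to $P(Z,A_{r,\infty})\HEu$ --- as ``the main obstacle'' and then do not resolve it. Everything before that point is setup; the content of the theorem lies precisely in producing the preimages $\eta_n$ (or some substitute) with the correct growth rate, and you have only described what such a construction would have to satisfy, not carried it out. Your suggested route through $Z_r^{-1}$ is also shaky even in spirit: the quantity $\|Z_r^{-n}e_\delta\|_2$ is not governed by the spectral behaviour of $Z_r^{-1}$ on the small slice $e_\delta\HEu$ in any obvious way, and there is no visible mechanism by which ``outer-edge concentration'' of $e_\delta$ converts into the bound $(1/r+o_\delta(1))^n$ without a much more explicit computation. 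The $o_\delta(1)$ fudge is an additional problem, since it would only place the limit vector in $P(Z,A_{r-o_\delta(1),\infty})\HEu$, not in $P(Z,A_{r,\infty})\HEu$.

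The paper does perform an explicit computation, but on a different and more tractable model. Rather than the continuous filtration $(p_t)$, it uses the $N\times N$ upper-triangular matrix realization of $Z$ from~\cite{DH04}, with $*$-free DT diagonal blocks $a_1,\dots,a_N$ and $*$-free circular off-diagonal entries $b_{ij}$. The Brown-measure slices are chosen so that $a_1,a_2$ are themselves (scaled) circular free Poisson operators supported on thin annuli $E_1,E_2$ separated by a controllable gap. One then applies Proposition~\ref{prop:disks} to the explicit sequence $v_n=\big((b_{12}a_2^{-n-1})\hat{\;},(a_2^{-n})\hat{\;},0,\dots,0\big)^t$: thanks to $*$-freeness and the R-diagonal identity $\|a_j^k\|_2=\|a_j\|_2^k$, every norm is computed exactly, the limit $x=\lim_n Z^nv_n$ exists in $\HEu$, and combining with the local-spectral-subspace result from~\cite{DH04} places $x\in P(Z,E_2)\HEu$. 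The companion vector $y$ sits in the first coordinate slot, hence in $P(Z,E_1)\HEu$, and the angle is then $\cos^{-1}\big(\|\zeta\|/\sqrt{1+\|\zeta\|^2}\big)$ with $\|\zeta\|^2=1/(N(\delta_1-\delta_2))$, which can be made as small as desired. A recursive choice of disjoint annuli upgrades this from failure of UNZA to failure of NZA on a single Borel set. The point is that the discrete matrix decomposition converts the analytic obstacle you identified into a closed-form moment calculation using freeness --- exactly the ingredient your sketch is missing.
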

\begin{proof}
Let $Z$ be a circular free Poisson operator of parameter $c\ge1$.
Thus (see Section~\ref{subsec:cfP}), $Z$ is a $DT(\mu,1)$ operator, where $\mu$ is uniform measure on the annulus $A_{\sqrt{c-1},\sqrt{c}}$ and, moreover, $Z$ is R-diagonal.
Let $N\in\Nats$, $N > 2$.
By Theorem 4.12 of~\cite{DH04}, we may realize $Z$ in $M_N(\Mcal)\cong M_N(\Cpx)\otimes\Mcal$ (with respect to the tracial state $\frac1N\Tr_N\otimes\tau$) as an upper triangular
matrix
\[
Z=\begin{pmatrix}
a_1 & b_{12} & \cdots & \cdots & b_{1N} \\
0 & a_2 & b_{23} & & \vdots \\
\vdots & \ddots & \ddots & \ddots & \vdots\\
\vdots && 0 & a_{N-1} & b_{N-1,N} \\[1.5ex]
0& \cdots & \cdots & 0 & a_N
\end{pmatrix},
\]
With
\begin{itemize}
\item[$\bullet$] $(a_k)_{k=1}^N,\,(b_{ij})_{1\le i<j\le N}$ a $*$-free family in $(\Mcal,\tau)$
\item[$\bullet$] each $b_{ij}$ circular with $\tau(b_{ij}^*b_{ij})=\frac1N$,
\item[$\bullet$] each $a_j$ a $DT(\mu_j,\frac1{\sqrt{N}})$ operator for a Borel probability measure $\mu_j$ on $\Cpx$,
\item[$\bullet$] $\frac1N(\mu_1+\cdots+\mu_N)=\mu$
\end{itemize}
and we are free to choose the $\mu_j$ subject to these conditions.
In particular, we may choose a measurable partition of the annulus $A_{\sqrt{c-1},\sqrt c}$ into $N$ equally weighted, pairwise disjoint sets, $E_1,\ldots,E_N$
and let $\mu_j$ be the renormalized restriction of $\mu$ to $E_j$.
Let $\delta_1,\delta_2$ be such that 
\begin{equation}\label{eq:delts}
\frac1N<\delta_2<\delta_1<1-\frac1N.
\end{equation}
We may choose such a partition $E_1,\ldots,E_N$ so that
\begin{equation}\label{eq:E1E2}
E_1=A_{\sqrt{c-\delta_1-\frac1N},\sqrt{c-\delta_1}},\qquad
E_2=A_{\sqrt{c-\delta_2},\sqrt{c-\delta_2+\frac1N}}.
\end{equation}
Then $\sqrt Na_j$ is a $\DT(\mut_j,1)$-operator, where
$\mut_1$ and $\mut_2$ are uniform measures on
$A_{\sqrt{N(c-\delta_1)-1},\sqrt{N(c-\delta_1)}}$ and $A_{\sqrt{N(c-\delta_2)},\sqrt{N(c-\delta_2)+1}}$, respectively.
Namely, $\sqrt{N}a_1$ and  $\sqrt{N}a_2$ are circular free Poisson of parameters $N(c-\delta_1)$ and $N(c-\delta_2)+1$, respectively.
In partiular, $a_2$ is invertible and, by Proposition~\ref{prop:cfPnorms}, we have 
\begin{align}
\|a_1\|_2^2&=\frac1N(N(c-\delta_1))=c-\delta_1 \label{eq:a1nm} \\
\|a_2^{-1}\|_2^2&=\frac N{N(c-\delta_2)}=\frac1{c-\delta_2}. \label{eq:a2nm}
\end{align}

The upper left $2\times 2$ corner of $Z^n$ is equal to 
\[
\begin{pmatrix}
a_1^n & \sum_{k=0}^{n-1}a_1^kb_{12}a_2^{n-k-1} \\[1ex]
0 & a_2^n
\end{pmatrix}.
\]
We regard $M_N(\Mcal)$ as acting on the Hilbert space $\HEu:=L^2(\Mcal,\tau)^{\oplus N}$, whose elements are thought of as column vectors of length $N$.
For each $n\in\Nats$, let $\eta_n,\xi_n\in L^2(\Mcal,\tau)$ be the elements
\[
\eta_n=(a_2^{-n})\hat{\;},\qquad\xi_n=(b_{12}a_2^{-n-1})\hat{\;}
\]
and let $v_n=(\xi_n,\eta_n,0,0,\ldots,0)^t\in\HEu$.
Then
\[
Z^nv_n=\left({\textstyle\sum_{k=0}^n(a_1^kb_{12}a_2^{-k-1})\hat{\;}},\,\oneh,\,0,0,\ldots,0\right)^t.
\]
Now for all $k\ge0$, we have
\begin{align}
\|a_1^kb_{12}a_2^{-k-1}\|_2^2&=\tau((a_2^{-k-1})^*b_{12}^*(a_1^k)^*a_1^kb_{12}a_2^{-k-1})  \label{eq:abak} \\
&=\tau((a_1^k)^*a_1^k)\tau(b_{12}^*b_{12})\tau((a_2^{-k-1})^*a_2^{-k-1}) \notag \\
&=\frac{\|a_1^k\|_2^2\|a_2^{-k-1}\|_2^2}N
=\frac{\|a_1\|_2^{2k}\|a_2^{-1}\|_2^{2(k+1)}}N \notag \\
&=\frac{(c-\delta_1)^k}{N(c-\delta_2)^{k+1}}=\frac1{N(c-\delta_2)}\left(\frac{c-\delta_1}{c-\delta_2}\right)^k, \notag 
\end{align}
where the second equality is a result of $*$-freeness, the fourth equality is from Proposition~\ref{prop:Rdiag}, and the fifth is from~\eqref{eq:a1nm} and~\eqref{eq:a2nm}.
Thus, we may set
\[
\zeta=\sum_{k=0}^\infty(a_1^kb_{12}a_2^{-k-1})\hat{\;}\in L^2(\Mcal,\tau),
\]
where convergence is with respect to $\|\cdot\|_2$, and we have
\[
x:=\lim_{n\to\infty}Z^nv_n=(\zeta,\oneh,0,0,\ldots,0)^t\in\HEu.
\]
On the other hand, in a similar manner we have, in $\HEu$,
\begin{align*}
\|v_n\|^2&=\|\eta_n\|^2+\|\xi_n\|^2 \\
&=\tau((a_2^{-n})^*a_2^{-n})+\tau((a_2^{-n-1})^*b_{12}^*b_{12}a_2^{-n-1}) \\
&=\|a_2^{-n}\|_2^2+\tau(b_{12}^*b_{12})\tau((a_2^{-n-1})^*a_2^{-n-1}) \\
&=\|a_2^{-1}\|_2^{2n}+\frac1N\|a_2^{-1}\|_2^{2(n+1)} \\
&=\frac1{(c-\delta_2)^n}+\frac1{N(c-\delta_2)^{n+1}}
\end{align*}
and
\[
\lim_{n\to\infty}\|v_n\|^{1/n}=\frac1{\sqrt{c-\delta_2}}.
\]
By Proposition~\ref{prop:disks}, this implies that $x$ lies in the range of the 
Haagerup--Schultz projection $P(Z,A_{\sqrt{c-\delta_2},\infty})$.
However, using Proposition~5.3 of~\cite{DH04},
we have the following inclusion involving local spectral subspaces:
\[
L^2(\Mcal,\tau)^{\oplus 2}\oplus 0^{\oplus N-2}\subseteq\HEu_Z(E_1\cup E_2),
\]
where $E_1$ and $E_2$ are as in~\eqref{eq:E1E2}.
Thus, by Proposition~\ref{prop:HSdecomp}, $x$ belongs to the range of $P(Z,E_1\cup E_2)$.
Using the lattice properties of the Haagerup--Schultz projections (Theorem~\ref{thm:hsproj-lattice}),
we have that $x$ belongs to the range of
\[
P(Z,E_1\cup E_2)\wedge P(Z,A_{\sqrt{c-\delta_2},\infty})=P(Z,E_2).
\]
Again using Proposition~5.3 of~\cite{DH04}, we have
\[
L^2(\Mcal,\tau)\oplus 0^{\oplus N-1}\subseteq\HEu_Z(E_1).
\]
In particular, we have that
\[
y:=(\zeta,\underset{N-1\textrm{ times}}{\underbrace{0,\ldots,0}})^t
\]
lies in this range of $P(Z,E_1)$.
Let $\theta$ be the angle between the vectors $x$ and $y$.
Then
\[
\cos\theta=\frac{|\langle x,y\rangle|}{\|x\|\,\|y\|}=\frac{\|\zeta\|}{\sqrt{1+\|\zeta\|^2}}.
\]
Forcing $\theta$ to be arbitrarily close to zero is equivalent to forcing $\|\zeta\|$ to be arbitrarily large.
We compute
\begin{align*}
\|\zeta\|^2&=\sum_{k,\ell\ge0}\tau((a_2^{-k-1})^*b_{12}^*(a_1^k)^*a_1^\ell b_{12}a_2^{-\ell-1}) \\
&=\sum_{k,\ell\ge0}\tau((a_1^k)^*a_1^\ell)\tau(b_{12}^*b_{12})\tau((a_2^{-k-1})^*a_2^{-\ell-1}) \displaybreak[1] \\
&=\sum_{k=0}^\infty\tau((a_1^k)^*a_1^k)\tau(b_{12}^*b_{12})\tau((a_2^{-k-1})^*a_2^{-k-1}) \displaybreak[2] \\
&=\sum_{k=0}^\infty\frac{\|a_1^k\|_2^{2}\|a_2^{-k-1}\|_2^2}N
=\sum_{k=0}^\infty\frac{\|a_1\|_2^{2k}\|a_2^{-1}\|_2^{2(k+1)}}N \displaybreak[1] \\
&=\sum_{k=0}^\infty\frac{(c-\delta_1)^k}{N(c-\delta_2)^{k+1}}
=\frac1{N(c-\delta_2)}\sum_{k=0}^\infty\left(\frac{c-\delta_1}{c-\delta_2}\right)^k=\frac1{N(\delta_1-\delta_2)},
\end{align*}
where for the second equality we used $*$-freeness of $(a_1,b_{12},a_2)$, for the third equality we used R-diagonality of $a_1$, and the remaining part
of the computation follows as in~\eqref{eq:abak}.

To summarize, we have shown that given $\eps>0$ and $N\ge3$, by choosing $\delta_2$ and $\delta_1$ satisfying~\eqref{eq:delts} and so that $N(\delta_1-\delta_2)$
is sufficiently small, we ensure
\[
\alpha\left(P\left(Z,A_{\sqrt{c-\delta_1-\frac1N},\sqrt{c-\delta_1}}\right),P\left(Z,A_{\sqrt{c-\delta_2},\sqrt{c-\delta_2+\frac1N}}\right)\right)<\eps.
\]
This already implies that $Z$ fails to have the UNZA property.
It is now, however, an easy matter to show that $Z$ also fails to have the NZA property.
We recursively choose $N_1<N_2<\cdots$ and $\delta_2^{(k)},\,\delta_1^{(k)}$ satisfying
\begin{equation}\label{eq:deltk}
\frac1{N_k}<\delta_2^{(k)}<\delta_1^{(k)}<1-\frac1{N_k}
\end{equation}
and so that, letting
\begin{align*}
E_1^{(k)}&=A_{\sqrt{c-\delta_1^{(k)}-\frac1{N_k}},\sqrt{c-\delta_1^{(k)}}} \\
E_2^{(k)}&=A_{\sqrt{c-\delta_2^{(k)}},\sqrt{c-\delta_2^{(k)}+\frac1{N_k}}},
\end{align*}
the annuli
\begin{equation}\label{eq:Es}
E_1^{(1)},E_2^{(1)},\ldots,E_1^{(k)},E_2^{(k)},\ldots
\end{equation}
are pairwise disjoint, and
\begin{equation}\label{eq:Ndelt0}
\lim_{k\to\infty}N_k(\delta_1^{(k)}-\delta_2^{(k)})=0.
\end{equation}
This will ensure
\[
\lim_{k\to\infty}\alpha(P(Z,E_1^{(k)}),P(Z,E_2^{(k)}))=0
\]
so that, letting $F_j=\bigcup_{k=1}^\infty E_j^{(k)}$, we will have $F_1\cap F_2=\emptyset$ and
\[
\alpha(P(Z,F_1),P(Z,F_2))=0.
\]
This will imply that $Z$ fails the NZA property.

To see that the recursive choices of $N_k$, $\delta_1^{(k)}$ and $\delta_2^{(k)}$ may be made,
we start with $N_1=3$, $\delta_2^{(1)}=\frac49$ and $\delta_1^{(1)}=\frac59$.
Suppose $N_k$, $\delta_2^{(k)}$ and $\delta_1^{(k)}$ have been chosen.
We note that the stipulation~\eqref{eq:deltk} implies that the annulus $E_2^{(k)}$ lies outside of the annulus $E_1^{(k)}$.
We will choose
\begin{equation}\label{eq:k+1}
N_{k+1},\quad\delta_2^{(k+1)},\quad\delta_1^{(k+1)}
\end{equation}
so that $E_1^{(k+1)}$ lies outside of $E_2^{(k)}$, which will ensure pairwise disjointness of the annuli~\eqref{eq:Es}.
For this, we will need
\[
c-\delta_1^{(k+1)}-\frac1{N_{k+1}}>c-\delta_2^{(k)}+\frac1{N_k},
\]
This will hold if we choose the quantities~\eqref{eq:k+1} so that 
\[
\frac1{N_{k+1}}<\delta_2^{(k+1)}<\delta_1^{(k+1)}<\delta_2^{(k)}-\frac1{N_k}-\frac1{N_{k+1}}
\]
holds.
This is possible.
Indeed, we first choose $N_{k+1}$ so that
\[
\frac2{N_{k+1}}<\delta_2^{(k)}-\frac1{N_k},
\]
and then we choose $\delta_1^{(k+1)}$ satisfying
\[
\frac1{N_{k+1}}<\delta_1^{(k+1)}<\delta_2^{(k)}-\frac1{N_k}-\frac1{N_{k+1}}
\]
and, finally, we choose $\delta_2^{(k+1)}$ satisfying
\[
\frac1{N_{k+1}}<\delta_2^{(k+1)}<\delta_1^{(k+1)}
\]
and, furthemore, 
$N_{k+1}(\delta_1^{(k+1)}-\delta_2^{(k+1)})<\frac1k$,
in order to ensure that~\eqref{eq:Ndelt0} holds.
\end{proof}

\end{document}